\numberwithin{equation}{section}
\ifpdf \usepackage[pdftex,pdfstartview=FitH,pdfpagemode=none,colorlinks,bookmarks,linkcolor=blue]{hyperref} \else  \usepackage[hypertex]{hyperref} \fi
\newcommand{\hide}[1]{}
\newtheorem{theorem}{Theorem}[section]
\newtheorem{lemma}[theorem]{Lemma}
\newtheorem{corollary}[theorem]{Corollary}
\newtheorem{definition}[theorem]{Definition}
\newtheorem{conjecture}[theorem]{Conjecture}
\newtheorem{proposition}[theorem]{Proposition}
\newtheorem{notation}[theorem]{Notation}
\newtheorem{hypothesis}[theorem]{Hypothesis}
\theoremstyle{definition}\newtheorem{remark}[theorem]{Remark}
\theoremstyle{definition}\newtheorem*{acknowledgments}{Acknowledgments}
\newcommand{\cA}{\mathcal{A}}
\newcommand{\cB}{\mathcal{B}}
\newcommand{\cF}{\mathcal{F}}
\newcommand{\cI}{\mathcal{I}}
\newcommand{\cJ}{\mathcal{J}}
\newcommand{\cN}{\mathcal{N}}
\newcommand{\cP}{\mathcal{P}}
\newcommand{\cS}{\mathcal{S}}
\newcommand{\cU}{\mathcal{U}}
\newcommand{\cV}{\mathcal{V}}
\newcommand{\bC}{\mathbb{C}}
\newcommand{\bD}{\mathbb{D}}
\newcommand{\bR}{\mathbb{R}}
\newcommand{\bZ}{\mathbb{Z}}
\newcommand{\bQ}{\mathbb{Q}}
\newcommand{\bN}{\mathbb{N}}
\newcommand{\bT}{\mathbb{T}}
\newcommand{\hg}{{\hat{g}}}
\newcommand{\tR}{{\widetilde{R}}}
\newcommand{\tM}{{\widetilde{M}}}
\newcommand{\tF}{{\widetilde{F}}}
\newcommand{\tf}{{\tilde{f}}}
\newcommand{\tg}{{\tilde{g}}}
\newcommand{\teta}{{\tilde{\eta}}}
\newcommand{\Exp}{\mathop{\mathbb{E}}}
\newcommand{\gog}{\mathfrak{g}}
\newcommand{\bfe}{\mathbf{e}}
\newcommand{\bfh}{\mathbf{h}}
\newcommand{\bfj}{\mathbf{j}}
\newcommand{\bfn}{\mathbf{n}}
\newcommand{\bfN}{\mathbf{N}}
\newcommand{\re}{\operatorname{Re}}
\newcommand{\Ad}{\operatorname{Ad}}
\newcommand{\di}{\mathrm{d}}
\newcommand{\id}{\mathrm{id}}
\newcommand{\ab}{\mathrm{ab}}
\newcommand{\Poly}{\mathrm{Poly}}
\renewcommand{\mod}{\mathrm{mod}\ }
\newcommand{\onto}{\xymatrix{\ar@{>>}[r]&}}
\newcommand{\da}[4]{\xymatrix{#1 \ar@<.5ex>[r]^{#2} \ar@<-.5ex>[r]_{#3} & #4}}
\newcounter{subconst}[subsection]
\newcounter{const}
\newcounter{CONST}
\renewcommand{\theCONST}{C_{\arabic{CONST}}}
\newcommand{\lCONST}[1]{\refstepcounter{CONST}\label{#1}\theCONST}
\newcounter{BONST}
\renewcommand{\theBONST}{B_{\arabic{BONST}}}
\newcommand{\lBONST}[1]{\refstepcounter{BONST}\label{#1}\theBONST}
\begin{document}

\title{M\"obius Disjointness for Nilsequences Along Short Intervals}
\author[X. He]{Xiaoguang He}
\address{\newline Shandong University, Jinan, Shandong 250100, China\newline \rm hexiaoguangsdu@gmail.com}

\author[Z. Wang]{Zhiren Wang}
\address{\newline Pennsylvania State University, University Park, PA 16802, USA\newline \rm zhirenw@psu.edu}

\begin{abstract}
For a nilmanifold $G/\Gamma$, a $1$-Lipschitz continuous function $F$ and the M\"obius sequence $\mu(n)$, we prove a bound on the decay of the averaged short interval correlation $$\frac1{HN}\sum_{n\leq N}\Big|\sum_{h\leq H} \mu(n+h)F(g^{n+h}x)\Big|$$ as $H,N\to\infty$. The bound is uniform in $g\in G$, $x\in G/\Gamma$ and $F$.
\end{abstract}

\setcounter{page}{1}
%\begin{abstract}
%\end{abstract}
\maketitle
{\small\tableofcontents}

\section{Introduction}
The M\"obius function $\mu : \mathbb{N} \rightarrow \{-1, 0, 1\}$ is defined as follows: $\mu(1)=1$, $\mu(n) =(-1)^k$ when $n$ is the product of $k$ distinct primes and $\mu(n) = 0$ otherwise. This is an important function since that $\sum\limits_{n\leq N}\mu(n)=o(N)$ is equivalent to prime number theorem, and  that
$\sum\limits_{n\leq N}\mu(n)=O_\varepsilon(N^{\frac{1}{2}+\varepsilon})$ for all $\varepsilon>0$ is equivalent to the Riemann Hypothesis.

The M\"obius Randomness Law, proposed in \cite{IK04}, suggests that reasonable sequences
$\xi(n)$ which have significant cancellations with $\mu(n)$, that is
$$\sum\limits_{n\leq N}\mu(n)\xi(n)=o(\sum\limits_{n\leq N}|\xi(n)|).$$
The M\"obius Disjointness Conjecture, of Sarnak \cite{S09}, expects to use observables from zero entropy topological dynamical systems as the sequence $\xi$.

\begin{conjecture} \label{ConjSarnak} (M\"obius Disjointness Conjecture, \cite{S09})
 Let $(X, T)$ be a topological dynamical system with zero topological entropy. Then 
\begin{equation}\label{EqConj}\lim_{N\to\infty}\frac{1}{N}\sum_{n=1}^{N}f(T^nx)\mu(n)=0, \forall f\in C(X), \forall x\in X.\end{equation}
\end{conjecture}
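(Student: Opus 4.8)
The plan is to deduce Conjecture~\ref{ConjSarnak} from a prime-pair self-correlation estimate \emph{inside} the system $(X,T)$ --- an estimate in which the M\"obius function has disappeared --- and then to prove that estimate by an entropy-decrement argument fed by the short-interval theory of multiplicative functions, with the short-interval nilsequence bound of the present paper playing the role of the ``structured'' input.

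\textbf{Step 1 (removing $\mu$).} Fix a zero-entropy topological dynamical system $(X,T)$, a function $f\in C(X)$, and $x\in X$, and put $a_n=f(T^nx)$, a uniformly bounded sequence. By the orthogonality criterion of Bourgain--Sarnak--Ziegler (a quantitative descendant of an argument of K\'atai), \eqref{EqConj} follows once we establish
\[
\lim_{P\to\infty}\ \limsup_{N\to\infty}\ \frac{1}{\pi(P)^{2}}\sum_{\substack{p\ne q\le P\\ \text{both prime}}}\ \Big|\frac1N\sum_{n\le N}a_{pn}\,\overline{a_{qn}}\Big|\ =\ 0 .
\]
Since $a_{pn}\,\overline{a_{qn}}=F\big((T^{p}\!\times T^{q})^{n}(x,x)\big)$ with $F(y,z)=f(y)\overline{f(z)}$, each inner average is an ergodic average of a continuous function along the orbit of $(x,x)$ in the product system $(X\times X,\ T^{p}\!\times T^{q})$, which again has zero topological entropy. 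Thus the problem has become purely dynamical: the orbits $\{T^{pn}x\}_n$ and $\{T^{qn}x\}_n$ must decorrelate \emph{on average over distinct prime pairs}.

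\textbf{Step 2 (low complexity from zero entropy).} Taking a weak-$*$ limit of empirical measures $\frac1{N}\sum_{n\le N}\delta_{(T^p\times T^q)^n(x,x)}$ along a subsequence realizing the $\limsup$ yields a $T^p\!\times T^q$-invariant measure on $X\times X$ of zero Kolmogorov--Sinai entropy, by the variational principle; passing to ergodic components reduces matters to zero-entropy ergodic systems $(Y,\nu,S)$, in which $H_\nu\big(\mathcal P\vee S\mathcal P\vee\cdots\vee S^{H-1}\mathcal P\big)=o(H)$ for every finite partition $\mathcal P$. One then splits $F$ into a structured part seen by the Host--Kra--Ziegler nilfactors of $(Y,\nu,S)$ --- whose prime-pair average is governed by short-interval nilsequence correlations of precisely the type estimated in this paper --- and a uniform remainder which the $o(H)$ block-entropy bound forces to contribute negligibly; these two pieces have then to be recombined across the average over $p,q$.

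\textbf{Step 3 (the short-interval core; the main obstacle).} Here lies the genuine difficulty. Strengthening Tao's entropy-decrement scheme, the sublinearity of the $H$-letter entropy produces a density-one set of scales $H$ on which the block process is nearly memoryless, so that its pairing with an arithmetic weight on a typical short interval $[M,M+H]$ is controlled by that weight's average over the interval; for $\mu$ this is exactly the Matom\"aki--Radziwi\l\l{} theorem, $\tfrac1H\sum_{h\le H}\mu(M+h)\to0$ for almost every $M\le N$. Feeding this into the bilinear average through the multiplicativity of $\mu$ and summing over the good scales gives the vanishing in Step~1, whence \eqref{EqConj} and, since $f$ and $x$ were arbitrary, Conjecture~\ref{ConjSarnak}. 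The obstruction to carrying this out as stated is twofold and at present unresolved: the entropy-decrement mechanism naturally controls only the \emph{logarithmically} averaged correlations $\tfrac1{\log N}\sum_{n\le N}\tfrac1n\,a_{pn}\,\overline{a_{qn}}$, and replacing these by the Ces\`aro averages that \eqref{EqConj} demands requires excluding a conspiracy concentrated on a sparse sequence of dyadic scales, for which no tool is currently available; and, separately, the entropy arguments deliver a statement about $\nu$-generic points, so upgrading it to \emph{every} $x\in X$ forces one across the gap between the measure-theoretic and the topological categories. A scale-uniform quantitative refinement of Matom\"aki--Radziwi\l\l{} for prime-dilated short-interval sums, or a genuinely new structural dichotomy for zero-entropy \emph{topological} systems, seems to be the missing ingredient.
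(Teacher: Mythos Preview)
The statement you are addressing is a \emph{conjecture}, not a theorem: the paper records Sarnak's M\"obius Disjointness Conjecture as an open problem and makes no attempt to prove it in full generality. The paper's contribution is Theorem~\ref{ThmMain}, a short-interval estimate for the specific class of nilsequences, which is far from covering all zero-entropy systems. So there is no ``paper's own proof'' to compare against.

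As for your proposal itself, you have in fact written your own refutation in Step~3. The two obstructions you name are real and are precisely why the conjecture is open. First, the entropy-decrement mechanism of Tao and Tao--Ter\"av\"ainen controls only logarithmically averaged correlations $\frac{1}{\log N}\sum_{n\le N}\frac{1}{n}a_{pn}\overline{a_{qn}}$, and there is currently no way to upgrade this to the Ces\`aro averages that \eqref{EqConj} requires; a sequence could in principle misbehave on a zero-density set of dyadic scales and still defeat \eqref{EqConj} while satisfying the logarithmic version. Second, the ergodic-theoretic machinery (Host--Kra factors, entropy decrement) lives on the measure-theoretic side and says nothing about an arbitrary point $x\in X$; the passage from generic points to all points is exactly the gap between the logarithmic Sarnak conjecture for ergodic weights (Frantzikinakis--Host) and the full topological conjecture. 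Your Step~2 already smuggles in this gap: taking a weak-$*$ limit of empirical measures along \emph{one} point $x$ gives you a measure, but the structure theory you then invoke applies to $\nu$-almost-every point, not to the particular $x$ you started with.

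In short: you have not proposed a proof but rather a correct and well-informed outline of why the conjecture is hard, with the gaps honestly labeled. That is useful as exposition, but it is not a proof, and neither the paper nor anyone else currently has one.
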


Here, a topological dynamical system is a pair $(X, T)$ consisting of a compact metric space $X$, and a continuous self-map $T: X\rightarrow X$. 

There have been many partial results on the Conjecture \ref{ConjSarnak}. For brevity we will simply refer to the recent comprehensive survey \cite{FKL18} for the progress in this area, and discuss only the historical developements that are more relevant to this paper.

The special case of Conjecture \ref{ConjSarnak} for circle rotations, has been known since 1937 due to Davenport's work \cite{D37}. Indeed, Davenport proved in \cite{D37} that for all $A>0$, \begin{equation}\label{EqDavenport}\sup_{\alpha\in\bR}\Big|\frac1N\sum_{n\leq N}e(\alpha n)\Gamma)\Big|\ll_A \log^{-A} N.\end{equation} Here $e(u)=e^{2\pi i u}$.

An important extension to this class is the nilsystems, namely tranlations $x\to g.x$ on a compact nilmanifold $G/\Gamma$. Such systems are particularly important because of their close relationship to multiple ergodic averages. Functions of the form $n\to f(g^n.x)$ cover all the polynomial and bracket polynomial phases. It was known, as a special case of Ratner's Theorem \cite{R91} and its discrete version by Shah \cite{S98}, that every trajectory of such a translation always equidistributes to the union of finitely many translated copies of a closed sub-nilmanifold. This property was extended by Leibman \cite{L05} to polynomial orbits in nilmanifolds (see Definition \ref{DefPoly} for the definition).

 M\"obius disjointness along orbits of nilsystems, or more generally polynomial orbits, was established by Green and Tao \cite{GT12b} in the following form:
\begin{equation}\label{EqGT12}\sup_{g,F}
\Big|\frac1N\sum_{n\leq N}\mu(n)F(g(n)\Gamma)\Big|\ll_{m,A}R^{-O_{m,A}(1)}\log^{-A} N,\end{equation} where the supremum is taken over all polynomial functions $g:\bZ\to G$ with respect to a given nilpotent filtration $G_\bullet$ and all functions $F:G/\Gamma\to\bC$ that are $1$-Lipschitz. Here $m=\dim G$, and the parameter $R$ records the rationality of the pair $(G_\bullet,\Gamma)$ (see Section \ref{SecNil} for related definitions).

Green-Tao's proof was based on their accompaying paper \cite{GT12a}, which effectivized Leibman's Theorem by describing quantitatively how fast a trajectory equidistributes to a subnilmanifold of $G/\Gamma$. This was then applied to joinings of two orbits of the forms $\{g(pn)\Gamma\}$ and $\{g(qn)\Gamma\}$. Combined with Vaughan's Identity \cite{V77}, which is a modern form of the Vinogradov bilinear method, such estimates lead to the orthogonality to the M\"obius function.

Another strengthening to Davenport's estimate \eqref{EqDavenport} was achieved in the recent breakthrough papers of Matom\"aki-Radziwi\l{}\l{} \cite{MR16} and Matom\"aki-Radziwi\l{}\l{}-Tao \cite{MRT16} on averages of non-pretentious multiplicative functions along short intervals. As a consequence, they proved in \cite{MRT16}  that for all real-valued 1-bounded multiplicative functions, which in particular include the M\"obius and Liouville functions, \begin{equation}\label{EqMRT}
  \sup_{\alpha \in \bR} \sum_{n\leq N} \left| \sum_{h\leq H} \mu(n+h) e(\alpha (n+h)) \right|\ dx \ll \left( \frac{\log\log H}{\log H} + \frac{1}{\log^{1/700} N} \right) HN.
\end{equation} Such estimates were used to prove an averaged form of the Chowla Conjecture in \cite{MRT16}, as well as the  logarithmically averaged Chowla and Elliott Conjectures for correlations with either 2 or an odd number of components by Tao \cite{T16} and Tao-Ter\"{a}v\"{a}inen \cite{TT17}. The theorems in  \cite{MR16} and  \cite{MRT16} have also yielded many applications to Conjecture \ref{ConjSarnak}, especially to dynamical systems with strong quasi-periodic behavior (see the survey \cite{FKL18}). They were also used in Frantzikinakis-Host's proof  \cite{FH18} of logarithmically averaged Sarnak Conjecture for ergodic weights. For most of these applications, it is essential to have a uniform decay rate in \eqref{EqMRT} that is independent of the choice of $\alpha$.

It is natural to seek a further strengthening to \eqref{EqDavenport} that combines the theorems of Green-Tao \eqref{EqGT12} and Matom\"aki-Radziwi\l{}\l{}-Tao \eqref{EqMRT}, namely a quantitative bound to M\"obius disjointness along short intervals for nilsequences. This is the purpose of the current paper.  This question is especially interesting because, as remarked in \cite{T16}*{p34}, short interval correlations between multiplicative functions and higher step nilsequences would be useful in the study of logarithmicall averaged Chowla and Elliott conjectures of higher order correlations.

Previously in this direction, Flaminio, Fr\polhk aczek, Ku\l aga-Przymus, and Lema\'{n}czyk \cite{FFKL19} proved that:
if  $\varphi$ is an ergodic unipotent affine automorphism of a compact nilmanifold $G/\Gamma$ and $x\in G/\Gamma$, $F\in C^0(G/\Gamma)$, then:
\begin{equation}
  \frac1N\sum_{N\leq n<2N}\left|\frac1H\sum_{h\leq H}\mu (n+h)
   F(\varphi^{n+h}(x))\right|\to 0
\end{equation}
  as $ H\rightarrow\infty $ and~$ N/H\rightarrow\infty$. Similar results were also shown for polynomial phases by El Abdalaoui-Lema'nczyk-de la Rue in \cite{ELD17}. The proofs purely relies on a minor arc argument and uses the bilinear method in the form of the K\'atai-Bourgain-Sarnak-Ziegler criterion \cites{K86, BSZ13}. The decay estimates in \cite{FFKL19} and \cite{ELD17} are not effective as the dynamics becomes highly quasi-periodic.

The result in this paper produces a uniformly effective bound without requiring ergodicity.

It should also be noted that without the extra average in $N$, non-trivial bounds on $\left|\frac1H\sum_{h\leq H}\mu (n+h)
   f(n+h)\right|$ were obtained in the works of Zhan \cite{Z91}, Huang \cite{H15, H16} and Matom\"aki-Shao \cite{MS19} when $f$ is a polynomial phase and $H\gg n^\theta$ for some given $\theta\in(0,1)$. ($\theta=\frac23$ in \cite{MS19}).

Our main theorem is:

\begin{theorem}\label{ThmMain} Suppose $G$ is a connected, simply connected $m$-dimensional nilpotent Lie group and $\Gamma\subset G$ is a lattice. Then there exists $H_0=H_0(G,\Gamma)>0$ and $\epsilon_0=\epsilon_0(m)>0$, such that:

For all $H,N\in\bN$ satisfying $H>H_0$ and $(\log N)^\frac12>\log H$, and $\epsilon\in(\frac{\log\log H}{\log H},\epsilon_0)$, there exists a set $\cS\in[N]$, whose construction depends only on $H$, $N$ and $\epsilon$, such that \begin{equation}\label{EqThmMainSSize}N-\#\cS\ll_m\epsilon N,\end{equation} and
\begin{equation}\label{EqThmMainS}\sup_{\substack{\|F\|_{G/\Gamma}\leq 1\\g\in G, x\in G/\Gamma}}\frac1{HN}\sum_{n\leq N}\Big|\sum_{h\leq H} 1_\cS\mu(n+h)F(g^{n+h}x)\Big|
\ll_mH^{-\epsilon}+\delta(H^\epsilon,N).\end{equation}

Here, the implied constants depend only on $m$. $\|F\|_{G/\Gamma}$ stands for the Lipschitz norm of a function $F$ on $G/\Gamma$.  The construction of the error function $\delta(\cdot,\cdot)>0$ is independent of all the parameters here, and it satisfies $\displaystyle\lim_{N\to\infty}\delta(a,N)=0$ for all $a>0$.

In particular, \begin{equation}\label{EqThmMain}\sup_{\substack{\|F\|_{G/\Gamma}\leq 1\\g\in G, x\in G/\Gamma}}\frac1{HN}\sum_{n\leq N}|\Big|\sum_{h\leq H} \mu(n+h)F(g^{n+h}x)\Big|
\ll_m\epsilon+H^{-\epsilon}+\delta(H^\epsilon,N).\end{equation}\end{theorem}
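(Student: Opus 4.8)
The plan is to combine the Matomäki–Radziwiłł–Tao machinery for short-interval sums of multiplicative functions along ``major-arc'' linear phases with the Green–Tao quantitative equidistribution theory for nilsequences, bridging the two via a decomposition of the nilsequence $n\mapsto F(g^nx)$ into a smooth (slowly varying) part and a part that correlates with a nontrivial character of a horizontal torus. First I would normalize: by the general structure theory of nilmanifolds (Malcev bases, the filtration $G_\bullet$, and a choice of rational basis as in Section \ref{SecNil}), reduce to the case where $G/\Gamma$ carries a fixed rational Malcev basis with rationality bounded by $R=R(G,\Gamma)$, so that all the quantitative estimates of \cite{GT12a} apply with constants depending only on $m$ and $R$ — and $R$ is then absorbed into $H_0(G,\Gamma)$. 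Next, apply the \cite{GT12a} factorization theorem to the polynomial sequence $g^n$ at scale $H^\epsilon$: for each starting point $n_0\le N$, either (i) the orbit $\{g^{n_0+h}x : h\le H\}$ is $H^{-c}$-equidistributed in a proper subnilmanifold, in which case $F$ restricted to the orbit is within $O(H^{-c})$ of an average that varies slowly in $n_0$ and the sum over $h$ behaves like a major-arc linear-phase sum; or (ii) the orbit fails to equidistribute, which forces $g$ to lie on a ``small-denominator'' algebraic subvariety. The set $\cS$ is defined to remove those $n\le N$ (a proportion $\ll_m\epsilon$ of them) where the local behavior at scale $H^\epsilon$ is bad — crucially, the definition of $\cS$ depends only on $H,N,\epsilon$ and not on $g,x,F$, which I would arrange by removing a union over a bounded-cardinality $H^{-O(1)}$-net of possible ``frequency data'' rather than over actual choices of $g$.

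The core estimate is for the major-arc contribution. After the factorization, on the good set the inner sum $\sum_{h\le H}1_\cS\mu(n+h)F(g^{n+h}x)$ is, up to an error $\ll_m H^{-\epsilon}+\delta(H^\epsilon,N)$, a linear combination (with $1$-bounded, $n$-slowly-varying coefficients) of sums of the form $\sum_{h\le H}\mu(n+h)e(\alpha_n(n+h))$, where $\alpha_n$ is the horizontal frequency extracted from the factorization. Here I invoke \eqref{EqMRT} — the Matomäki–Radziwiłł–Tao bound — which gives $\frac1{HN}\sum_{n\le N}|\sum_{h\le H}\mu(n+h)e(\alpha(n+h))|\ll \frac{\log\log H}{\log H}+\log^{-1/700}N$ uniformly in $\alpha$. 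Since $\epsilon>\frac{\log\log H}{\log H}$, the first term is $\ll H^{-\epsilon}$ after adjusting $\epsilon_0$ — wait, more carefully, $\frac{\log\log H}{\log H}$ is much larger than $H^{-\epsilon}$, so instead I must run the MRT bound at the scale of the \emph{short interval} itself: I partition $[1,N]$ into blocks of length $\asymp H^{1/\epsilon}$ (permissible since $(\log N)^{1/2}>\log H$ ensures $H^{1/\epsilon}\le N$), apply MRT within each block so that the ``$\log\log$'' savings is measured in the block-length $\gg H^{1/\epsilon}$ rather than in $H$, giving a bound $\ll \frac{\epsilon\log\log H}{\log H}+\delta'(H^\epsilon,N)\ll H^{-\epsilon}+\delta(H^\epsilon,N)$ for a suitable error function $\delta$ with $\delta(a,N)\to0$. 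The minor-arc contribution — where $\alpha_n$ is irrational with large denominator, equivalently where the nilsequence is genuinely equidistributing — is handled by the Green–Tao orthogonality \eqref{EqGT12}, or rather by its proof via Vaughan's identity and the type-I/type-II bilinear estimates, applied at scale $H$; this is where a genuine power-saving $H^{-O_m(1)}$ appears, comfortably beating $H^{-\epsilon}$ for $\epsilon<\epsilon_0(m)$ small.

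The main obstacle I anticipate is making the decomposition into ``major-arc'' and ``minor-arc'' pieces \emph{uniform} in $(g,x,F)$ while keeping the bad set $\cS$ depending only on $(H,N,\epsilon)$. The Green–Tao factorization produces, for each $g$, a frequency $\alpha$ depending on $g$; to define $\cS$ without reference to $g$ one must instead cover the (finite-dimensional) parameter space of possible reduced frequency data by a net of size $H^{O_m(1)}$, remove for each net point a set of $n$'s of density $\ll_m\epsilon H^{-O_m(1)}$ where the MRT bound would be violated, and take the union; the total removed density is then $\ll_m\epsilon$, and every actual $g$ is handled by the net point it is close to. Controlling the error incurred by this discretization — showing that replacing $\alpha_n$ by a nearby net point changes $\sum_{h\le H}\mu(n+h)e(\alpha(n+h))$ by $\ll H^{-\epsilon}HN$ in the $\ell^1$-average — requires that the net be fine enough ($H^{-2}$-fine suffices by summation by parts, since $|\{n:\,\|(\alpha-\alpha')h\|\text{ large for some }h\le H\}|$ is small), and this is compatible with keeping the net cardinality $H^{O_m(1)}$. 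Assembling the three contributions — the factorization/approximation error $\ll_m H^{-\epsilon}+\delta(H^\epsilon,N)$, the major-arc MRT bound $\ll_m H^{-\epsilon}+\delta(H^\epsilon,N)$, and the minor-arc Green–Tao bound $\ll_m H^{-O_m(1)}$ — and noting that \eqref{EqThmMain} follows from \eqref{EqThmMainSSize}, \eqref{EqThmMainS} and the trivial bound $\frac1{HN}\sum_{n\le N}|\sum_{h\le H}(1-1_\cS)\mu(n+h)F(g^{n+h}x)|\le\frac{N-\#\cS}{N}\ll_m\epsilon$, completes the proof. $\qed$
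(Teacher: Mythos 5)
Your proposal has the right two ingredients in broad outline (Green--Tao quantitative equidistribution plus Matom\"aki--Radziwi\l{}\l{}--Tao), but both of the central estimates fail as you describe them, and the role of $\cS$ is misidentified.

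First, the minor arc. You propose to handle the genuinely equidistributing case by ``Vaughan's identity and the type-I/type-II bilinear estimates, applied at scale $H$,'' claiming a power saving $H^{-O_m(1)}$ per interval. No such estimate is available: the integers $n+h$, $h\leq H$, have size up to $N$, and since $\log H<(\log N)^{1/2}$ we have $H=N^{o(1)}$, so you would be asking for cancellation of $\mu$ on a single interval of length $N^{o(1)}$ near $N$ --- this is exactly the obstruction that makes the short-interval problem hard and is the reason the outer average over $n$ appears in the theorem at all. The paper never runs a bilinear argument inside one interval. Instead it uses the K\'atai-type identity of Lemma \ref{LemSqFree} to write $\mu(n+h)$ as a sum over factorizations $pl=n+h$, applies Cauchy--Schwarz in $l$ \emph{across all $n\leq N$ simultaneously}, and reduces to bounding correlations between pairs of short orbits attached to different starting points $n,n'$ with $pn\approx p'n'$. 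That is why the paper must treat $g^{n+h}$ as a two-parameter polynomial $g(n,h)$, prove the two-parameter factorization Theorem \ref{ThmFactorization}, and establish the joint equidistribution statement Proposition \ref{PropJoining}. None of this machinery, which is the technical heart of the paper, is present in your proposal.

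Second, the major arc and the set $\cS$. You correctly notice that the $\frac{\log\log H}{\log H}$ loss in \eqref{EqMRT} does not beat $H^{-\epsilon}$, but the proposed fix --- partitioning $[1,N]$ into blocks of length $H^{1/\epsilon}$ so that the ``$\log\log$ savings is measured in the block length'' --- does not work: that loss is governed by the length of the \emph{inner} short interval, which remains $H$ however you group the outer average. The actual resolution is the entire reason $1_\cS$ appears in the statement: $\cS$ is (essentially) the Matom\"aki--Radziwi\l{}\l{}--Tao set $\cS_{P_1,Q_1,N}$ of integers having a prime factor in each range $[P_r,Q_r]$, evaluated at $n+h$ rather than at $n$; its complement has density $\ll\frac{\log P_1}{\log Q_1}\ll\epsilon$, and restricting to it is what upgrades the bound to the $H^{-\epsilon}$-quality estimate of Theorem \ref{ThmMRT} and Proposition \ref{PropMajorPR}. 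Your $\cS$ --- starting points $n$ with bad frequency data, built from a net --- plays no such role and would leave the $\frac{\log\log H}{\log H}$ loss intact. (The bad-equidistribution starting points do occur in the paper, as the complement of $\cN$ in Lemma \ref{LemGenericED}, but they have density $W^{-B}$ and are absorbed into the error term, not into $\cS$.) Finally, the paper's major/minor split is not ``rational versus irrational frequency'': after factorization, the major arc is the projection of $F_{n,\bfj}$ onto its mean over the subnilmanifold (handled via Dirichlet characters mod $q$ and MRT), and the minor arc is the zero-mean part, killed by equidistribution.
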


The Lipschitz norm of $F$ needs to be defined using a particular Mal'cev basis of the Lie algebra of $G$ that is compatible with $\Gamma$. For details, see \eqref{DefLip}.

By taking $\epsilon=\frac{\log\log H}{\log H}$, the following corollary immediately follows:

\begin{corollary}\label{CorMain} Suppose $G$ is a connected, simply connected $m$-dimensional nilpotent Lie group and $\Gamma\subset G$ is a lattice. Then there exists $H_0=H_0(G, \Gamma)>0$, such that:

For all $H,N\in\bN$ with $H>H_0$ and $(\log N)^\frac12>\log H$, \begin{equation}\label{EqThmMainS}\sup_{\substack{\|F\|_{G/\Gamma}\leq 1\\g\in G, x\in G/\Gamma}}\frac1{HN}\sum_{n\leq N}\Big|\sum_{h\leq H} \mu(n+h)F(g^{n+h}x)\Big|
\ll_m\frac{\log\log H}{\log H}+\delta(\log H,N),\end{equation} where the implied constant and the construction of the error function $\delta(\cdot,\cdot)>0$ depends only on $m$, and $\delta$ satisfies $\displaystyle\lim_{N\to\infty}\delta(a,N)=0$ for all $a>0$. \end{corollary}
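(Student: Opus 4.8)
Since Corollary~\ref{CorMain} is the specialization $\epsilon=\frac{\log\log H}{\log H}$ of Theorem~\ref{ThmMain}, I describe a proof plan for the theorem. The strategy is to marry the quantitative equidistribution theory for polynomial nilsequences of Green--Tao \cite{GT12a,GT12b} with the short-interval technology for non-pretentious multiplicative functions of Matom\"aki--Radziwi\l{}\l{}--Tao \cite{MR16,MRT16}, the bridge between the two being a Ramar\'e-type bilinear decomposition in the spirit of the Vinogradov method. The induction is on $m=\dim G$: the base case $m=0$ is trivial, and the case where the relevant nilsequence degenerates to an additive character $e(\alpha n)$ is exactly \eqref{EqMRT}.

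\textbf{Step 1: equidistribution dichotomy and descent in dimension.} Fix $g$, $x$, $F$. Since $G$ is connected and simply connected, $n\mapsto g^n$ is a polynomial sequence, and I apply the Green--Tao factorization theorem \emph{at a scale comparable to the inner interval length} (after the reductions below this will be roughly $H^{1-O(\epsilon)}$, not $N$, which is what ultimately makes the estimate uniform in $g$). This produces $g^n=\varepsilon(n)\,g_1(n)\,\gamma(n)$ with $\varepsilon$ slowly varying, $\gamma$ rational of bounded denominator $q$, and $g_1(n)$ totally equidistributed in a closed subnilmanifold $G'/\Gamma'$ of bounded complexity, the equidistribution rate being a small negative power of the scale. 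If $G'\subsetneq G$ one absorbs $\varepsilon(n)$ into a slightly worse Lipschitz function, splits the $h$-range into progressions modulo $q$, and invokes the inductive hypothesis on $G'$; hence one may assume $g^nx$ is totally equidistributed on the relevant scale. The low-complexity and large-$q$ contributions that this reduction does not kill are controlled by Green--Tao-type long-average bounds \eqref{EqGT12}, which is the origin of the error term $\delta(H^\epsilon,N)$; by construction $\delta(a,N)\to0$ as $N\to\infty$ for each fixed $a$.

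\textbf{Steps 2--3: Ramar\'e identity, bilinear sum, correlations.} Following the Matom\"aki--Radziwi\l{}\l{} method, let $\cS\subseteq[N]$ be the set of $n$ for which $[n+1,n+H]$ contains roughly the expected number of integers with a prime factor in $(H^{\epsilon},H^{2\epsilon})$, together with a few further ``typicality'' conditions (e.g.\ an upper bound on that count); a Mertens-type estimate gives $N-\#\cS\ll_m\epsilon N$, which is \eqref{EqThmMainSSize}. For $n\in\cS$, a Ramar\'e-type identity rewrites $\sum_{h\leq H}\mu(n+h)F(g^{n+h}x)$, up to an $L^1$-error of size $O(\epsilon HN)$ after summing in $n$, as a weighted bilinear sum $\sum_{p\sim H^\epsilon}\sum_{m}w(p,m)\mu(m)F(g^{pm}x)$ over integers $m$ with $pm\in[n+1,n+H]$, using $\mu(pm)=-\mu(m)$ for $p\nmid m$; here $m$ runs over intervals of length $\sim H^{1-\epsilon}$. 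Cauchy--Schwarz in the $m$-aspect reduces matters to bounding the pairwise correlations
\[
\sum_{m\in I}F(g^{p_1m}x)\overline{F(g^{p_2m}x)},\qquad p_1,p_2\sim H^\epsilon,\ |I|\gg H^{1-2\epsilon}.
\]
Each such correlation is an ergodic average of a $1$-Lipschitz function on $(G\times G)/(\Gamma\times\Gamma)$ along the orbit of $(g^{p_1},g^{p_2})$: applying the factorization theorem on $G\times G$ at scale $H^{1-2\epsilon}$, the correlation is $O(\delta(H^\epsilon,\cdot))$ when $(g^{p_1},g^{p_2})$ equidistributes, and otherwise this pair is trapped in a proper subnilmanifold, forcing a nontrivial horizontal character relation $p_1\chi(g)+p_2\chi'(g)\approx0\bmod1$. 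Because $g^n$ was made equidistributed in Step 1, the quantities $\chi(g),\chi'(g)$ are badly approximable by rationals of small denominator, so a standard large-sieve / Vinogradov count shows that only an $O(H^{-\epsilon})$-fraction of prime pairs $(p_1,p_2)$ near $H^\epsilon$ are bad. Assembling, the second moment is $O\!\big(\epsilon^2+H^{-\epsilon}+\delta(H^\epsilon,N)^2\big)$ times the trivial bound, and taking square roots gives \eqref{EqThmMainS}; \eqref{EqThmMain} then follows by dropping $1_\cS$ at the cost of $\#([N]\setminus\cS)/N\ll_m\epsilon$.

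\textbf{Main obstacle.} The heart of the difficulty is reconciling the two scales at play: Green--Tao equidistribution is quantitative only at a fixed scale, with bounds that deteriorate as the complexity and rationality of the data grow, whereas the bilinear step shrinks the effective scale from $H$ to $\sim H^{1-2\epsilon}$, and Step~1's passage to progressions modulo $q$ shrinks it further. One must therefore choose the prime range $(H^\epsilon,H^{2\epsilon})$, the threshold $H_0(G,\Gamma)$, and the parameters of the dimension induction so that every invocation of the factorization theorem yields subnilmanifolds and denominators of complexity bounded in terms of $m$ and $\epsilon$ only, so that the shortened intervals remain long enough to retain equidistribution, and so that the accumulated errors telescope to $\ll_m\epsilon+H^{-\epsilon}+\delta(H^\epsilon,N)$. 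Doing this while keeping the bound uniform over all $g\in G$ — in particular for $g$ whose orbit is nearly stationary on scale $H$ — is the most delicate and technically heavy part, and is where the bulk of the work will go.
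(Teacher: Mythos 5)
Your reduction of the corollary to Theorem \ref{ThmMain} with $\epsilon=\frac{\log\log H}{\log H}$ is exactly what the paper does, and your high-level architecture for the theorem (factorization, Ramar\'e identity, Cauchy--Schwarz, counting bad prime pairs via horizontal characters) is the right one. But there is a genuine gap at the point you yourself flag as ``the main obstacle,'' and the mechanism you propose does not close it. A one-parameter factorization of $n\mapsto g^n$ applied ``at scale $H^{1-O(\epsilon)}$'' only controls the initial segment of that orbit: the smooth part is slowly varying and the equidistribution statement is valid only on a window of that length, whereas $n$ ranges up to $N$ with $\log N>(\log H)^2$. Factoring each window $\{g^{n+h}\}_{h\le H}$ separately instead produces window-dependent data $(Y_n,\gamma_n,q_n)$, and the bilinear step then needs to compare the equidistribution patterns of $Y_{n_1}$ and $Y_{n_2}$ for the $\sim N$ pairs with $p_1n_1\approx p_2n_2$ --- which is precisely what an independent application of the factorization theorem in each window cannot give. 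The paper's key move, absent from your plan, is to regard $g^{n+h}$ as a \emph{two-parameter} polynomial $g(n,h)$ and to prove a two-parameter factorization (Theorem \ref{ThmFactorization}): the smooth part is $(W,(N,H))$-smooth jointly in $(n,h)$, the rational part is $q\bZ^2$-periodic, and for all but $W^{-B}N$ values of $n$ the slices $h\mapsto g'(n,h)$ are totally equidistributed in one \emph{common} subnilmanifold $G'$. This uniformity across $n$ is what makes the pigeonholing over configurations in the proof of Proposition \ref{PropJoining} possible; your ``large-sieve count of bad pairs $(p_1,p_2)$'' presupposes it.

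A second, independent omission: you never subtract the mean of $F$ over the subnilmanifold in which each short orbit equidistributes. If $\int F_{n,\bfj}\neq 0$, the correlation $\sum_{m\in I}F(g^{p_1m}x)\overline{F(g^{p_2m}x)}$ does \emph{not} decay --- the tensor product has nonzero mean on the product nilmanifold, so equidistribution of $(g^{p_1m},g^{p_2m})$ is of no help. The paper splits $F_{n,\bfj}=\tF_{n,\bfj}+E_{n,\bfj}$ and handles the mean part as a separate ``major arc'': after unfolding, it is a sum of $\beta(n+h)$ against a $q$-periodic weight, which is expanded in Dirichlet characters and controlled by the Matom\"aki--Radziwi\l{}\l{}--Tao short-interval theorem in the form of Theorem \ref{ThmMRT} (characters $\chi$ of modulus up to $W$), not by the additive-character estimate \eqref{EqMRT} that you cite as your ``base case.'' Without this decomposition your Step 3 fails for any $F$ with nonvanishing fiber means, and the source of the error term $\delta(\cdot,N)$ (which in the paper comes from $\tM(\beta,\cdot,\cdot)$ via Hal\'asz, not from Green--Tao long averages \eqref{EqGT12}) is misidentified.
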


In particular, in the settings of Corollary \ref{CorMain}, \begin{equation}\lim_{H\to\infty}\frac1H\limsup_{N\to\infty}\frac1N\sum_{n\leq N}\Big|\sum_{h\in H} \mu(n+h)F(g^{n+h}x)\Big|=0,\end{equation} uniformly for all $g\in G$, $x\in X$ and functions $F:G/\Gamma\to\bC$ from a given uniformly Lipschitz family.

\begin{remark}Theorem \ref{ThmMain} and Corollary \ref{CorMain} also hold if $\mu$ is replaced by the Liouville function $\lambda$, or any multiplicative function $\beta$ that is non-pretentious in the sense $M(\beta\chi,X)\to\infty$ as $X\to\infty$ for all Dirichilet characters $\chi$. For the definition of the quantitiy $M(\cdot,X)$, see Definition \ref{DefM}. A more precise version of Theorem \ref{ThmMain}, specifying how $\delta(H^\epsilon, N)$ depends on the functions $M(\beta\chi,\cdot)$, will be given in Theorem \ref{ThmMinorMajor}.\end{remark}

\begin{remark}Theorem \ref{ThmMinorMajor}, and thus Theorem \ref{ThmMain} and Corollary \ref{CorMain}, is actually valid for all polynomial sequnces $\{g(n,h)\Gamma\}$ in $G/\Gamma$ in lieu of $\{g^{n+h}x\}$. This in particular covers orbits of unipotent affine automorphisms as in \cite{FFKL19}.\end{remark}

We now outline the organization of the paper. The strategy in our proof mixes those from \cite{GT12b} and \cite{MRT16}. The main issue is that, while it is known by \cite{GT12a} that when $H$ is sufficiently large, each individual short range orbit $\{g^{n+h}x\}_{1\leq h\leq H}$ in $G/\Gamma$ should equidistribute well in a subnilmanifold $Y_n$, in order to apply the bilinear method, it is necessary to know that the equidstribution behaviors display a similar pattern in $Y_n$ and $Y_{n'}$ when $pn\approx p'n'$ for a pair of bouned prime numbers $p'$, $q'$. It is for this reason that we choose to view $g(n+h)$, where $g$ is a polynomial in one variable, as a polynomial $g(n,h)$ in two variables $n$ and $h$. After introducing the background notions in Section \ref{SecNil}, in Section \ref{SecLeibman} we derive a variation of Green-Tao's quantitative version of Leiman's Theorem that better adapts to our situation. Namely, we show that when $N$ and $H$ are both sufficiently large, $\{g(n,h)\Gamma\}_{1\leq h\leq H}$ is equidistributed in some $Y_n$ for a typical $n\leq N$, and the equidistrbution patterns in all such $Y_n$'s are correlated to each other. Section \ref{SecBilinear} sets up the bilinear methods scheme and separates the estimate into minor and major arcs along each short interval. In the major arc part (Section \ref{SecMajor}), the Matom\"aki-Radziwi\l{}\l{}-Tao estimate can be applied as the correspondence $n\to Y_n$ is periodic. In the minor arc part (Section \ref{SecMinorI}), we use Lemma \ref{LemSqFree} to replace the bilinear sum in \cite{MRT16}, which becomes a sum of 4-fold products after applying Cauchy-Schwarz and would get too complicated for nilsequences, with one that consists of 2-fold products recording the correlations between short orbits of the form $\{g(n,p(h+r))\}$ and $\{g(n',p'(h+r'))\}$ where $pn\approx p'n'$. The bound of such correlations, for all but a small portion of choices of $(n,n',p,p')$, will be given by Proposition \ref{PropJoining} and proved in Section \ref{SecMinorII} using the aforementionned correlation among equidistribution patterns. Finally, Section \ref{SecFinal} merges the minor and major arcs and fixes appropriate parameters to conclude the proof.

\begin{notation}\label{NotationMain}In this paper:
\begin{itemize}
\item $X=O_Y(Z)$ or $X\ll_Y Z$ means that $\frac XZ$ is bounded by a constant that depends only on $Y$.
\item Working under  Hypothesis \ref{HypMain}, we shall assume by default that the implicit constant $Y$ depend on the degree $d$ of the filtration and the dimension $m$ of the nilmanifold, without including $m$, $d$ in the subscript. For example, $O_A(1)$ will actually stand for $O_{A,m,d}(1)$.
\item $[N]$ stands for the interval of integers $\{1,\cdots,N\}$.
\item In the remainder of this paper, many implicit constants $O(1)=O_{m,d}(1)$ will appear. For simplicity, we will use a common constant $\lCONST{CONSTImplicit}=O_{m,d}(1)\geq 1$ that is large enough for all these purposes. Similarly, from now on the notation $\ll$ will always stand for $\ll_{m,\epsilon}$.
\item For $\alpha\in\bR$, $\|\alpha\|_{\bR/\bZ}$ denotes $\max_{k\in\bZ}|\alpha-k|$.
\end{itemize}\end{notation}

\begin{acknowledgments}A large part of this research was done while X.H. was visiting Pennsylvania State University during the 2017-2018 academic year. X.H. thanks the financial support (No. 201706220146) from China Scholarship Council and the hospitiality of Pennsylvania State University that made the visit possible. Z.W. was supported by NSF grants DMS-1501095 and DMS-1753042.

We thank Wen Huang for helpful comments.\end{acknowledgments}

\section{Background on sequences in nilmanifolds}\label{SecNil}

In this section, we quickly collect all the facts and notions that we will need from Green-Tao's papers \cite{GT12a}*{\S 1, \S2 \& \S A} and \cite{GT12b}*{\S 3}.

A connected, simply connected Lie group $G$ is {\bf nilpotent} if it has a nilpotent {\bf filtration} $G_\bullet$, i.e. a descending sequnce of groups $G=G_1\supseteq G_2\supseteq\cdots\supseteq G_d\supseteq G_{d+1}=\{e\}$ such that \begin{equation}\label{EqFiltration}[G,G_{i-1}]\subseteq G_i, \forall i\geq 2.\end{equation} This actually implies $[G_i,G_j]\subseteq G_{i+j}$ for all $i, j\geq 1$. The number $d$ is the {\bf degree} of the filtration $G_\bullet$.  The {\bf step} $s$ of $G$ is the degree of the lower central filtration defined by $G_{i+1}=[G,G_i]$.

For all $i\geq d+1$, we will adopt the convention that $G_i=\{e\}$.

Denote by $\gog_i$ the Lie algebra $G_i$, then $\gog_\bullet=\{\gog_i\}$ is a {\bf filtration of Lie algebras}, i.e. $[\gog,\gog_i]\subseteq\gog_{i+1}$, if and only if $G_i$ is a filtration.

A connected, simply connected nilpotent Lie group $G$ has a lattice $\Gamma$ if and only if it has an algebraic structure defined over $\bQ$. In this case, for a connected Lie subgroup $H$ of $G$, $H$ is an algebraic subgroup defined over $\bQ$ if and only if $H\cap\Gamma$ is a lattice of $H$. A lattice $\Gamma$ must be cocompact, and the compact quotient $G/\Gamma$ is called a {\bf nilmanifold}.

\hide{
For a normal subgroup $H$ of $G$, one can define a fiber product by \begin{equation}\label{EqFiberProd}G\times_HG=\{(g,g')\in G\times G: g'\in gH\}.\end{equation}

When $G$ is $d$-step nilpotent, consider the group $G^\square=G\times_{G_2}G\subseteq G^2$, which is again a $d$-step connect nilpotent Lie group as the sequence $G_\bullet^\square=\{G_i\times_{G_{i+1}}G_i\}_{i=1}^{d+1}$ is a filtration of $G^\square$. (\cite{GT12a}*{Proposition 7.2}).

The top entry in $G_\bullet^\square$ is $G_d^\square=G_d\times_{G_{d+1}}G_d=\{(g,h)\in G_d^2: gh^{-1}\in G_{d+1}\}=\{e\}\}=\{(g,g):g\in G_d\}$. The quotient group $\overline{G^\square}=G^\square/G_d^\square$ is thus $d-1$ step nilpotent, naturally equipped with the filtration $\overline{G_\bullet^\square}=\{\overline{G_i^\square}\}_{i=1}^d$, where $\overline{G_i^\square}=G_i^\square/G_d^\square$.
}

A basis $\cV=\{V_1,\cdots, V_m\}$ of $\gog$ is {\bf $R$-rational} if the structure constants $c_{ijk}$ in the Lie bracket relations $[V_i,V_j]=\sum_kc_{ijk}V_k$ are rational numbers whose heights are bounded by $R$. Recall that the height of a rational number $\frac ab$ is $\max(|a|,|b|)$ when $a$, $b$ are coprime. For nilmanifolds $G/\Gamma$, $G$ always has a rational basis. A special kind of rational basis, {\bf Mal'cev basis}, was defined in \cite{M49}. A rational basis $\cV=\{V_1,\cdots, V_m\}$ is a Mal'cev basis adapted to $(G_\bullet, \Gamma)$ if it satisfies the following properties in \cite{GT12a}*{Def. 2.1}:\begin{enumerate}[(i)]
\item $\{V_j,V_{j+1},\cdots,V_m\}$ spans an ideal of $\gog$ for all $0\leq j\leq m$;
\item For each $1\leq i\leq d$ and $m_i=\dim G_i$, the Lie algebra $\gog_i$ of $G_i$ is the linear span of  $\{V_{m-m_i+1},V_{m-m_i+2},\cdots,V_m\}$;
\item There is a diffeomorphism $\psi_\cV:G\to\bR^m$ determined by $$\psi_\cV\Big(\exp(\omega_1V_1)\cdots\exp(\omega_mV_m)\Big)=(\omega_1,\cdots,\omega_m);$$
\item In the coordinate system $\psi_\cV$, $\Gamma=\psi_\cV^{-1}(\bZ^m)$.
\end{enumerate}

When $G$ has a lattice $\Gamma$, there is always a Mal'cev basis adapted to the lower central filtration. In the coordinate system given by $\psi_\cV$, the set $\psi_\cV^{-1}([0,1)^m)$ will be a fundamental domain of the projection $G\to G/\Gamma$.

In the sequel, we will always assume that $G/\Gamma$ has a Mal'cev basis $\cV$ adapted to $(G_\bullet, \Gamma)$ for some filtration $G_\bullet$, and fix the tuplet $(G, G_\bullet, \Gamma, \cV)$. In this case, every $G_i$ is a rational subgroup of $G$, and $\Gamma_i=G_i\cap\Gamma$ is a lattice of $G_i$.

The nilmanifold $G/\Gamma$ has a tower structure of principal torus bundles $$G/\Gamma=G/G_{d+1}\Gamma\to G/G_d\Gamma\to\cdots\to G/G_2\Gamma\to G/G_1\Gamma=\{\text{pt}\},$$ where $G/G_{i+1}\Gamma$ is a principal $G_i/G_{i+1}\Gamma$-bundle over $G/G_i\Gamma$. Remark that here $G_i/G_{i+1}\Gamma\cong\bT^{m_i-m_{i+1}}$ is the quotient of the abelian Lie group $G_i/G_{i+1}\cong\bR^{m_i-m_{i+1}}$ by the lattice generated by the projections of $V_{m-m_i+1},\cdots, V_{m-m_{i+1}}$.

\hide{Since $G/\Gamma$ is principal $G_d/\Gamma_d$-fiber over the quotient nilmanifold $G/G_d\Gamma$ and $G_d/\Gamma_d$ can be identified with $\bT^{m_d}$, we can decompose a Lipschitz continuous function $F:G/\Gamma\to\bC$ as a Fourier series $\sum_{\xi\in\bZ^{m_d}}\widehat F_\xi(x)$ along the vertical fibers, where $\widehat F_\xi(x)=\int_{G_d/\Gamma_d}e^{-2\pi i\xi\cdot z}F(zx)\di z$. Then $F_\xi(zx)=e^{2\pi i\xi\cdot z}F_\xi(x)$ for all $z\in\bT^{m_d}$ and $x\in G/\Gamma$.

\begin{definition}$F$ is said to {\bf has vertical oscillation of frequency $\xi$} if $F=F_\xi$.\end{definition}

Remark that if $G/\Gamma$ has a Mal'cev basis $\cV$ adapted to $G_\bullet$. Then $\Gamma^\square=G^\square\cap\Gamma^2$ is a lattice in $G^\square$, because $G^\square$ is a rational subgroup of $G^2$. More generally, for every $i$, $\Gamma_i^\square=G^\square\cap\Gamma_i^2$  is a lattice in $G_i^\square$. And $\overline{\Gamma^\square}=\Gamma^\square/\Gamma_d^\square$ is a lattice in $\overline{G^\square}$.
}

A vector $v\in\gog$ is {\bf an $R$-rational combination} of elements in $\cV$ if $v=\sum v_jV_j$ where the $v_j$'s are rational numbers of height bounded by $R$. A subgroup $H\subseteq G$ is {\bf $R$-rational} with respect to $\cV$ if its Lie algebra has a basis consisting of such $R$-rational combinations.

The Mal'cev basis $\cV$ induces a right invariant metric $\di_G$ on $G$, which is the largest metric such that $\di(x,y)\leq|\psi_\cV(xy^{-1})|$ always holds. Actually, this in turn induces a metric $\di_{G/\Gamma}$ on $G/\Gamma$. For functions $F:G/\Gamma\to\bC$, $\|F\|$ will denote the Lipschitz norm \begin{equation}\label{DefLip}\|F\|=\|F\|_{C^0}+\sup_{n\neq y}\frac{|F(x)-F(y)|}{\di_{G/\Gamma}(x,y)}\end{equation} with respect to $\di_{G/\Gamma}$. We will also write $\|F\|_{G/\Gamma}$ instead, when it becomes necessary to emphasize that the distance is determined by the Mal'cev basis of $G/\Gamma$.

The nilpotent Lie group $G$ is unimodular, and $G/\Gamma$ has a unique left-invariant probability measure. The notation $\int_{G/\Gamma}$ will refer to the average with respect to this measure.

Since $G/[G,G]$ is abelian and the commutator subgroup $[G,G]$ is a rational subgroup, $(G/\Gamma)/([G,G]/([G,G]\cap\Gamma))=G/[G,G]\Gamma$ is a quotient torus of the connected abelian Lie group $G/[G,G]\cong\bR$, called the {\bf horizontal torus with respect to $G_\bullet$} of $G/\Gamma$.

\begin{definition}\cite{GT12a}*{Definition 2.6} A {\bf horizontal character} is a continuous additive homomorphism $\eta:G/[G,G]\Gamma\to\bR/\bZ$. We remark that $\eta$ can also be viewed as a continuous group homomorphism $\eta: G\to\bR/\bZ$ that vanishes on the subgroup $[G,G]\Gamma$.\end{definition} Using the coordinate representation $\psi_\cV$,  there exists an integer vector $a\in\bZ^m$, supported on the first $m-m_2$ coordinates, such that \begin{equation}\label{EqHorCharForm}\eta(g)=a\cdot\psi_\cV(g) (\mod\bZ).\end{equation} The {\bf modulus} $|\eta|$ of $\eta$ is defined to be $|a|$. Note $\eta$ is trivial if and only if $|\eta|=0$. By abusing notation, we shall also denote by $\eta$ the linear functional $\eta(v)= a\cdot v$ on $\bR^m\cong\gog$.

\begin{definition}\label{DefCNorm}For a polynomial function $f:[N]\to\bR/\bZ$ of degree at most $d$, $f$ can be written as $f(n)=\sum_{i=0}^d\alpha_i\binom{n}{i}$. The {\bf $C^\infty([N])$-norm} of $f$ is given by $$\|f\|_{C^\infty([N])}=\max_{i=0}^dN^i\|\alpha_i\|_{\bR/\bZ}.$$\end{definition}

\begin{lemma}\label{LemCoeffBound}\cite{GT12b}*{Lemma 3.2} If $f(n)=\sum_{i=0}^d\beta_in^i$, then there is an integer $D=O_d(1)$ such that $\|D\beta_i\|_{\bR/\bZ}\ll_d N^{-i}\|f\|_{C^\infty[N]}$ for all $i=0,\cdots,d$.\end{lemma}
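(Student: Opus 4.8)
The statement is essentially a soft linear‑algebra fact: the monomial basis $\{n^i\}_{i=0}^d$ and the binomial basis $\{\binom{n}{i}\}_{i=0}^d$ of the space of real polynomials of degree $\le d$ are related by a fixed invertible upper‑triangular \emph{rational} matrix, whose entries do not depend on $N$ or on $f$; clearing its denominators costs only a bounded factor. So the plan is to change basis and keep track of constants. First I would pass to a genuine real polynomial: choose real lifts $\tilde\alpha_i\in\bR$ of the $\alpha_i\in\bR/\bZ$ with $|\tilde\alpha_i|=\|\alpha_i\|_{\bR/\bZ}$, and set $\tilde f(n)=\sum_{i=0}^d\tilde\alpha_i\binom{n}{i}$, a real polynomial congruent to $f$ modulo $\bZ$ on $[N]$. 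Expanding $\tilde f$ in monomials, $\tilde f(n)=\sum_{i=0}^d\tilde\beta_i n^i$, and reducing mod $\bZ$ recovers the monomial representation $\beta_i=\tilde\beta_i\bmod\bZ$. Using the classical identity $\binom{n}{i}=\frac1{i!}\sum_{j=0}^i s(i,j)\,n^j$, where the $s(i,j)\in\bZ$ are the Stirling numbers of the first kind, this gives the explicit formula $\tilde\beta_j=\sum_{i=j}^d\frac{s(i,j)}{i!}\,\tilde\alpha_i$.

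Next I would choose $D=d!$, so that $\frac{D}{i!}\in\bZ$ for every $i\le d$; then $D\tilde\beta_j=\sum_{i=j}^d\frac{D\,s(i,j)}{i!}\tilde\alpha_i$ is an \emph{integer} linear combination of the lifts $\tilde\alpha_i$. The integer coefficients are bounded: $\bigl|\frac{D\,s(i,j)}{i!}\bigr|\le d!\cdot|s(i,j)|\le (d!)^2=O_d(1)$, using $\sum_{j}|s(i,j)|=i!\le d!$. Since $\|k\alpha\|_{\bR/\bZ}\le |k|\,\|\alpha\|_{\bR/\bZ}$ for $k\in\bZ$, the triangle inequality in $\bR/\bZ$ then yields
$$\|D\beta_j\|_{\bR/\bZ}=\|D\tilde\beta_j\|_{\bR/\bZ}\le\sum_{i=j}^d\Bigl|\tfrac{D\,s(i,j)}{i!}\Bigr|\,\|\alpha_i\|_{\bR/\bZ}\ll_d\sum_{i=j}^d\|\alpha_i\|_{\bR/\bZ}.$$

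Finally I would insert the hypothesis on the $C^\infty$‑norm: $\|\alpha_i\|_{\bR/\bZ}\le N^{-i}\|f\|_{C^\infty[N]}\le N^{-j}\|f\|_{C^\infty[N]}$ for all $i\ge j$ (as $N\ge1$), and the sum has at most $d+1$ terms, so $\|D\beta_j\|_{\bR/\bZ}\ll_d N^{-j}\|f\|_{C^\infty[N]}$, as required. The only point needing a little care — and the closest thing to an obstacle here — is the bookkeeping forced by the fact that multiplication by the non‑integral rationals $\frac{s(i,j)}{i!}$ does not descend to $\bR/\bZ$: one must carry out the change of basis at the level of real lifts and only reduce mod $\bZ$ at the end, while verifying that the basis‑change constants, and hence $D$, depend only on $d$.
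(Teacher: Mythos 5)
Your argument is correct and is the standard change-of-basis proof; the paper gives no proof of this lemma, simply citing \cite{GT12b}*{Lemma 3.2}, whose proof runs along exactly these lines (the transition matrix between the monomial and binomial bases is rational with height $O_d(1)$). One claim needs a caveat: the monomial representation of a polynomial $f:[N]\to\bR/\bZ$ is \emph{not} unique modulo $\bZ$ (e.g.\ $\tfrac12 n^2+\tfrac12 n=\binom{n+1}{2}\equiv 0$), so your assertion that ``reducing mod $\bZ$ recovers $\beta_i=\tilde\beta_i$'' is false in general: the given $\beta_j$ and your computed $\tilde\beta_j$ may differ by classes $\gamma_j$ with $\sum_j\gamma_j n^j\equiv 0$ on $[N]$. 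This does not sink the proof: any real lift of such a discrepancy is a degree-$\le d$ polynomial that is integer-valued on $d+1$ consecutive integers, hence an integer combination of binomial coefficients, so its monomial coefficients lie in $\frac1{d!}\bZ$ and your choice $D=d!$ annihilates the $\gamma_j$; thus $D\beta_j=D\tilde\beta_j$ in $\bR/\bZ$ still holds and the rest of the estimate goes through verbatim. You should make this one-line observation explicit rather than asserting equality of the two representations.
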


\begin{lemma}\label{LemCNormBound}\cite{GT12a}*{Lemma 4.5} Suppose $f(n)=\sum_{i=0}^d\beta_in^i$, $\delta\in(0,\frac12)$, $\epsilon\in(0,\frac\delta2)$. If $f(n) (\text{mod } \bZ)$ belongs to an interval $I\subseteq\bR/\bZ$ of length $\epsilon$ for at least $\delta N$ integers $n\in[N]$. Then for some positive integer $D\ll_d\delta^{-O_d(1)}$, $\|Df (\text{mod }\bZ)\|_{C^\infty[N]}\ll_d\epsilon\delta^{-O_d(1)}$.\end{lemma}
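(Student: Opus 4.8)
The plan is to argue by induction on the degree $d$, with Weyl differencing powering the inductive step and a direct Diophantine analysis settling the base case $d=1$. By Lemma~\ref{LemCoeffBound} one may pass, at the cost of an $O_d(1)$ factor in the multiplier, between the monomial coefficients $\beta_i$ and the binomial coefficients $\alpha_i$ appearing in $\|\cdot\|_{C^\infty[N]}$, so it suffices to produce $D\ll_d\delta^{-O_d(1)}$ with $\|D\beta_i\|_{\bR/\bZ}\ll_d\epsilon\delta^{-O_d(1)}N^{-i}$ for every $1\le i\le d$, the constant term being irrelevant to the smoothness norm. I will also use a coarser fact that needs no relation between $\epsilon$ and $\delta$: expanding a nonnegative trigonometric majorant of $1_I$ of degree $\ll\delta^{-1}$ in a Fourier series turns the hypothesis $\#\{n\le N:f(n)\in I\}\ge\delta N$ into $|\sum_{n\le N}e(kf(n))|\gg\delta^{O(1)}N$ for some $1\le k\ll\delta^{-1}$, whereupon the standard converse of Weyl's inequality (as in \cite{GT12a}) yields $D\ll_d\delta^{-O_d(1)}$ with $\|Df\|_{C^\infty[N]}\ll_d\delta^{-O_d(1)}$.

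For the base case $d=1$, write $f(n)=\beta_0+\beta_1 n$ and $A=\{n\in[N]:f(n)\ (\mathrm{mod}\ \bZ)\in I\}$, so $\#A\ge\delta N$. A popular-difference pigeonhole produces $1\le h_0\ll\delta^{-1}$ with $\beta_1 h_0$ within $2\epsilon$ of an integer, and an Erd\H{o}s--Tur\'an/Koksma discrepancy estimate — this is where the hypothesis $\epsilon<\delta/2$ enters, since it says that $f(n)$ visits $I$ strictly more often than equidistribution predicts — upgrades this to a modulus $q\ll\delta^{-O(1)}$ with $\|q\beta_1\|_{\bR/\bZ}$ small. Splitting $[N]$ into the $q$ residue classes modulo $q$ and noting that on each class $f$ is affine in the class parameter with slope of size $\asymp q^{-1}\|q\beta_1\|_{\bR/\bZ}$, the density $\#A\ge\delta N$ forces that slope to be tiny, giving $N\|q\beta_1\|_{\bR/\bZ}\ll\epsilon\delta^{-O(1)}$; take $D=q$.

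For the inductive step, suppose the result holds for degree $d-1$ and let $f$ have degree $\le d$. If $\epsilon\gtrsim\delta^{2}$ the coarse bound already gives $\|Df\|_{C^\infty[N]}\ll_d\delta^{-O_d(1)}\ll_d\epsilon\delta^{-O_d(1)}$ once the exponent on the right is taken large enough, so we may assume $\epsilon$ is small relative to $\delta^{2}$. By Cauchy--Schwarz over short shifts there are $\gg\delta^{O(1)}N$ values of $h$ such that $\Delta_h f(n):=f(n+h)-f(n)$, of degree $\le d-1$, lies in the length-$2\epsilon$ interval $I-I\ni 0$ for $\gg\delta^{O(1)}N$ values of $n\in[N]$; in this regime $2\epsilon$ is still small compared with that density, so the inductive hypothesis applies to each such $\Delta_h f$, giving $q_h\ll\delta^{-O(1)}$ with $\|q_h\Delta_h f\|_{C^\infty[N]}\ll\epsilon\delta^{-O(1)}$. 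Since the $q_h$ take only $\ll\delta^{-O(1)}$ values, pigeonhole yields one $q$ and a set $\cH$ of $\gg\delta^{O(1)}N$ shifts with $\|q\Delta_h f\|_{C^\infty[N]}\ll\epsilon\delta^{-O(1)}$ for all $h\in\cH$.

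It remains to rebuild control of $f$ from control of the $q\Delta_h f$. The $n^{d-1}$-coefficient of $\Delta_h f$ is exactly $dh\beta_d$, so by Lemma~\ref{LemCoeffBound} there is $c=O_d(1)$ such that the linear form $h\mapsto c\,q\beta_d\,h$ stays within $\ll\epsilon\delta^{-O(1)}N^{-(d-1)}$ of an integer for all $h$ in a $\gg\delta^{O(1)}N$-dense subset of $[N]$; feeding this back into the case $d=1$ and combining multipliers, one obtains $D\ll\delta^{-O(1)}$ with $N^d\|D\beta_d\|_{\bR/\bZ}\ll\epsilon\delta^{-O(1)}$, controlling the top coefficient. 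Rounding $D\beta_d$ to the nearest integer and restricting to a residue class modulo $D$, on which $n^d$ is constant modulo $D$, reduces the remaining task to the same statement for a degree-$(d-1)$ polynomial on an interval of length $\asymp N/D$; the inductive hypothesis then handles $\beta_{d-1}$, and descending through $\beta_{d-2},\dots,\beta_1$ and replacing $D$ by the least common multiple of the bounded multipliers produced along the way completes the proof. I expect this recombination to be the main obstacle: it requires extracting one bounded common denominator from a whole family of near-rationality relations (a B\'ezout-type manoeuvre) and, above all, a disciplined accounting of the powers of $\delta$ and $\epsilon$ so that, across the $d$ layers of induction and the $d$ steps of the descent, the final multiplier stays $\ll_d\delta^{-O_d(1)}$ and the final bound $\ll_d\epsilon\delta^{-O_d(1)}$ with no stray factor of $\epsilon^{-1}$; the degenerate parameter windows met en route (where differencing would violate the ``interval shorter than the density'' condition) are absorbed, as in the inductive step, by reverting to the coarse Weyl-sum bound.
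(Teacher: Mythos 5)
The paper itself gives no proof of this lemma: it is imported verbatim as \cite{GT12a}*{Lemma 4.5}, so there is no in-paper argument to compare against. Judged on its own terms, your plan is sound and would yield a complete proof. The skeleton — coarse control $\|Df\|_{C^\infty[N]}\ll_d\delta^{-O_d(1)}$ from a Fourier majorant plus the converse of Weyl's inequality, a direct Diophantine treatment of $d=1$, Weyl differencing to control the top coefficient, and a descent on a residue class mod $D$ for the remaining coefficients — closes at every stage: the differenced polynomials $\Delta_hf$ concentrate in $I-I$ with density $\gg\delta^2$, so the inductive hypothesis applies once $\epsilon\ll\delta^{O_d(1)}$, and the complementary regime really is absorbed by the coarse bound after enlarging the exponent of $\delta^{-1}$ in the conclusion; the pigeonholed relations $\|(cqd\beta_d)h\|_{\bR/\bZ}\ll\epsilon\delta^{-O(1)}N^{-(d-1)}$ on a $\delta^{O(1)}$-dense set of $h$ do feed back into the $d=1$ case to give $\|D\beta_d\|_{\bR/\bZ}\ll\epsilon\delta^{-O(1)}N^{-d}$; and the triangular change of variables $n=Dm+r$ costs only further $\delta^{-O(1)}$ factors. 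This differencing induction is a genuinely different organization from Green--Tao's own argument, which shares your coarse first step but then extracts the $\epsilon$-refinement by passing to subprogressions of common difference $q$ on which $qf$ is essentially constant; their route keeps the multiplier arithmetic lighter, yours is more self-contained beyond the $d=1$ analysis.

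Three points to fix in a full write-up. First, in the base case, on the class $\{qm+r\}$ the slope in $m$ is $q\beta_1$, so the relevant quantity is $\|q\beta_1\|_{\bR/\bZ}$ itself, not $q^{-1}\|q\beta_1\|_{\bR/\bZ}$; and the hypothesis $\epsilon<\delta/2$ is what makes the majorant step nontrivial (the majorant has mean $\le\epsilon+O(\delta)$, which must fall strictly short of $\delta$), rather than the Erd\H{o}s--Tur\'an step as you suggest. Second, several steps (e.g.\ verifying $2\epsilon_1<\delta_1/2$ before reapplying the $d=1$ case to the linear form in $h$) silently require $N\gg_d\delta^{-O_d(1)}$; you should assume this or treat small $N$ separately — a caveat already implicit in the original statement. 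Third, the recombination you flag as the main obstacle (one common denominator from the family of near-rationality relations, and the $\epsilon$/$\delta$ accounting across the descent) is deferred rather than executed; it does go through by the standard triangular inversion from the top coefficient down, but it is the technical heart of the lemma and cannot be omitted from a complete proof.
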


For an integer vector $\bfN\in\bN^r$, write $[\bfN]=[N_1]\times\cdots\times [N_r]\subset\bZ^r$.

\begin{definition}\label{DefSmooth}\cite{GT12a}*{Definition 9.1} For a multiparatmeter finite sequence $\{g(\bfn)\}_{\bfn\in[\bfN]}$ in $G$ and an integer vector $\bfN\in\bN^r$, $g$ is said to be {\bf $(W,\bfN)$-smooth}, if for all $\bfn\in[\bfN]$, \begin{enumerate}\item $\di_G(g(\bfn),\id_G)\leq W$, 
\item $\di_G(g(\bfn),g(\bfn+\bfe_i))\leq\frac W{N_i}$ for all $i$, where $\bfe_i$ is the unit vector along the $i$-th coordinate direction.\end{enumerate}\end{definition}

If $g_1, g_2$ are both $(W,\bfN)$-smooth, and $W\geq R$, where the metric is induced by an $R$-rational Mal'cev basis, then $g_1g_2$ is $(W^{O(1)},\bfN)$ smooth.

\begin{definition}\label{DefRational} An element $g\in G$ is {\bf $R$-rational}, if there exists $1\leq r\leq R$ such that  $g^r\in\Gamma$. An element $z\in G/\Gamma$ is {\bf $R$-rational}, if $z=g\Gamma$ for some $R$-rational group element $g$.
\end{definition}

\begin{lemma}\label{LemRational}\cite{GT12a}*{Lemma A.11} Suppose the Mal'cev basis $\cV$ adapted to $(G_\bullet,\Gamma)$ is $R$-rational. With respect to $\cV$, if $g$ is $R$-rational then $\psi_\cV(g)\in\frac 1q\bZ^m$ for some $q\ll R^{O(1)}$. Conversely, if $\psi_\cV(g)\in\frac1R\bZ^m$ then $g$ is $R^{O(1)}$-rational. Moreover, the product of two $R$-rational elements is $R^{O(1)}$-rational.\end{lemma}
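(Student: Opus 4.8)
The plan is to reduce all three assertions to a single structural fact about Mal'cev coordinates, which I would take from \cite{GT12a}*{\S A} (it is ultimately a consequence of the Baker--Campbell--Hausdorff formula together with the $R$-rationality of the structure constants of $\cV$): the passage between the exponential coordinates of the first kind $X\mapsto\exp(X)$ and the coordinates $\psi_\cV$ of the second kind is a polynomial change of variables of degree $\ll_d 1$, fixing the origin, whose coefficients are rationals of height $\ll R^{O(1)}$. Since $\exp(X)^n=\exp(nX)$, in first-kind coordinates the $n$-th power map is simply $X\mapsto nX$, so conjugating by this change of variables gives: for every $g\in G$ the map $n\mapsto\psi_\cV(g^n)$ is a polynomial in $n$ of degree $\ll_d 1$ with $\psi_\cV(g^0)=0$, whose coefficients are polynomial functions of $\psi_\cV(g)$ with rational coefficients of height $\ll R^{O(1)}$; the group law $\psi_\cV(g)\cdot\psi_\cV(h)=\psi_\cV(gh)$ is polynomial of the same controlled type; and the coordinates are ``triangular'' along the filtration tower: because $\{V_j,\dots,V_m\}$ spans an ideal for every $j$ (property (ii) of the Mal'cev basis), the $j$-th coordinate of $g^n$ depends only on $\psi_\cV(g)_1,\dots,\psi_\cV(g)_j$ and is affine in $\psi_\cV(g)_j$ with slope $n$. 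Thus one may write $\psi_\cV(g^n)_j=n\,x_j+Q_j(n,x_1,\dots,x_{j-1})$ where $x=\psi_\cV(g)$ and $Q_j$ has rational coefficients of height $\ll R^{O(1)}$.

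Granting this, the first assertion follows by induction on $j$. Suppose $g^r\in\Gamma$ with $1\le r\le R$, so $\psi_\cV(g^r)=(p_1(r),\dots,p_m(r))\in\bZ^m$ with $p_j(n)=n\,x_j+Q_j(n,x_1,\dots,x_{j-1})$. Assume inductively $x_1,\dots,x_{j-1}\in\tfrac1{q_{j-1}}\bZ$ with $q_{j-1}\ll R^{O(1)}$; then $Q_j(r,x_1,\dots,x_{j-1})\in\tfrac1{R^{O(1)}}\bZ$ (bounded degree, coefficient heights $\ll R^{O(1)}$, $r\le R$, and $x_1,\dots,x_{j-1}\in\tfrac1{q_{j-1}}\bZ$), hence $r\,x_j=p_j(r)-Q_j(r,x_1,\dots,x_{j-1})\in\tfrac1{R^{O(1)}}\bZ$, so $x_j\in\tfrac1{R^{O(1)}}\bZ$ and we may take $q_j\ll R^{O(1)}$. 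After $m$ steps this gives $\psi_\cV(g)\in\tfrac1q\bZ^m$ with $q=q_m\ll R^{O(1)}$.

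For the converse, suppose $x=\psi_\cV(g)\in\tfrac1R\bZ^m$. By the structural fact each $p_j(n):=\psi_\cV(g^n)_j$ is a polynomial in $n$ of degree $\ll_d 1$ with $p_j(0)=0$, and evaluating its $R^{O(1)}$-rational coefficient polynomials at $x\in\tfrac1R\bZ^m$ shows all its coefficients lie in $\tfrac1D\bZ$ for a common $D\ll R^{O(1)}$. Writing $p_j(n)=\sum_{k\ge1}\tfrac{a_{j,k}}{D}n^k$ with $a_{j,k}\in\bZ$ and substituting $n=D$ gives $p_j(D)=\sum_{k\ge1}a_{j,k}D^{k-1}\in\bZ$ for every $j$, i.e.\ $\psi_\cV(g^D)\in\bZ^m$; thus $g^D\in\Gamma$ with $D\ll R^{O(1)}$, so $g$ is $R^{O(1)}$-rational. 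The last assertion is now immediate: if $g_1,g_2$ are $R$-rational then by the first part $\psi_\cV(g_1),\psi_\cV(g_2)\in\tfrac1q\bZ^m$ with $q\ll R^{O(1)}$, so $\psi_\cV(g_1g_2)$, being a polynomial of controlled type in these, lies in $\tfrac1{q'}\bZ^m$ with $q'\ll R^{O(1)}$, and the converse part applies to show $g_1g_2$ is $R^{O(1)}$-rational.

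The only genuine work is in the structural fact, and that is where the main obstacle lies; but it is bookkeeping rather than a real difficulty. Because $G$ has bounded step $d$ and bounded dimension $m$, the Baker--Campbell--Hausdorff series terminates after $\ll_d 1$ terms involving $\ll_d 1$ iterated brackets, so every denominator introduced—by BCH itself, by the binomial coefficients in the power map, and by the change between the two coordinate systems—is a product of a bounded number of factorials (bounded) and of the $R$-rational structure constants, each raised to a power $\ll_{m,d}1$, hence all $\ll R^{O_{m,d}(1)}$. One must also verify the triangular dependence $p_j(n)=n\,x_j+Q_j(n,x_1,\dots,x_{j-1})$ used above, which follows by applying the group law computation inside the quotient nilmanifolds $G/G^{(j)}\Gamma$, where $G^{(j)}=\exp\big(\Span(V_j,\dots,V_m)\big)$ is normal by the ideal property of $\cV$.
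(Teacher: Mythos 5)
The paper gives no proof of this lemma --- it is quoted verbatim from \cite{GT12a}*{Lemma A.11} --- and your argument is essentially a correct reconstruction of the proof in that appendix: the ``structural fact'' you isolate (polynomial coordinate changes, group law, and power map with $R^{O(1)}$-height rational coefficients, plus the triangular form $\psi_\cV(g^n)_j=nx_j+Q_j(n,x_1,\dots,x_{j-1})$) is exactly \cite{GT12a}*{Lemma A.3} together with the compatibility of Mal'cev coordinates with the tower of quotients, and the induction on coordinates, the substitution $n=D$ for the converse, and the reduction of the product statement to the first two parts all match the argument there. Your proposal is correct.
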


\begin{definition}\label{DefED}For  a finite arithmetic progression $\cA=\{qn+r\}_{n\in[N]}$ in $\bZ$, a finite sequence $\{x(n)\}_{n\in\cA}$ in $G/\gamma$ is said to be {\bf $\delta$-equidistributed} in $G/\Gamma$ if for all Lipschitz function $F$ on $G/\Gamma$, $$\left|\Exp_{n\in\cA}F(x(n))-\int_{G/\Gamma}F\right|\leq\delta\|F\|;$$
and it is {\bf totally $\delta$-equidistributed} in $G/\Gamma$  if the subsequence $\{x(n)\}_{n\in\cA'}$ is $\delta$-equidistributed in $G/\Gamma$ for all arithmetic progressions $\cA'\subseteq\cA$ of length at least $\delta N$.\end{definition}

\begin{lemma}\label{LemEtaTotalED}Suppose a Mal'cev basis $\cV$ adapted to $(G_\bullet,\Gamma)$ is $R$-rational where $R\geq 10$. Let $\eta$ be a non-trivial horizontal character of $G/\Gamma$, whose modulus $|\eta|$ is bounded by $R$ with respect to $\cV$. If for a polynomial  sequence $g\in\Poly(\bZ,G_\bullet)$ and $N\gg R$, $\|\eta\circ g\|_{C^\infty([N])}\leq R$, then $\{g(n)\Gamma\}_{n\in [N]}$ is not totally $(O(R))^{-1}$-equidistributed.\end{lemma}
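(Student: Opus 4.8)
The plan is to use the bounded Lipschitz test function $F=e(\eta(\cdot))$ and to exploit the fact that the hypothesis $\|\eta\circ g\|_{C^\infty([N])}\leq R$ forces the phase $n\mapsto\eta(g(n))$ to be almost constant along a sub-interval of $[N]$ of length $\gg N/R$. This is the easy converse to the quantitative Leibman theorem, so I do not expect a genuine obstacle; the only points needing care are the Lipschitz bound on $F$ (which rests on the path-metric characterization of $\di_{G/\Gamma}$) and the bookkeeping of the implicit constant hidden in $N\gg R$, both of which I flag below.

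\emph{Step 1 (the test function).} Since $\eta\colon G\to\bR/\bZ$ is a continuous homomorphism vanishing on $[G,G]\Gamma$, the assignment $x\mapsto e(\eta(x))$ descends to a function $F$ on $G/\Gamma$ with $\|F\|_{C^0}=1$, and $\int_{G/\Gamma}F=0$, because $\eta$ is a non-trivial character of the horizontal torus $\bT=G/[G,G]\Gamma$ and the Haar measure of $G/\Gamma$ pushes forward to that of $\bT$. For the Lipschitz norm, for all $x,y\in G$ one has $|F(x)-F(y)|=|e(\eta(xy^{-1}))-1|\leq 2\pi\|\eta(xy^{-1})\|_{\bR/\bZ}\leq 2\pi|\eta|\,|\psi_\cV(xy^{-1})|$; telescoping this infinitesimal estimate along paths and descending to the quotient (the standard comparison between $e(\eta(\cdot))$ and the path metric $\di_{G/\Gamma}$ induced by $\cV$, cf.\ \cite{GT12a}*{\S A}) gives $\|F\|\leq 1+2\pi|\eta|\leq 7R$ (using $R\geq 10$).

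\emph{Step 2 (the phase is nearly constant on a long interval).} Write $P:=\eta\circ g\colon[N]\to\bR/\bZ$, a polynomial of degree at most $d$, in the binomial basis as $P(n)=\sum_{i=0}^d\alpha_i\binom ni$; the hypothesis says exactly that $\|\alpha_i\|_{\bR/\bZ}\leq R\,N^{-i}$ for all $i$. By Pascal's rule $P(n+1)-P(n)=\sum_{i=1}^d\alpha_i\binom n{i-1}$, and since $0\leq\binom n{i-1}\leq N^{i-1}$ for $n\leq N$ the triangle inequality gives $\|P(n+1)-P(n)\|_{\bR/\bZ}\leq\sum_{i=1}^d N^{i-1}\cdot R\,N^{-i}=dR/N$ for every $n<N$. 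Consequently, on the interval $I=\{1,\dots,L\}$ with $L=\lfloor N/(100dR)\rfloor$, all the values $P(n)$, $n\in I$, lie in one arc of $\bR/\bZ$ of length $<1/100$. The hypothesis $N\gg R$ is used here to ensure $L\geq 1$; in fact, taking the implicit constant large enough, $L\geq N/(200dR)$.

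\emph{Step 3 (conclusion).} Since the points $e(P(n))$, $n\in I$, lie on an arc of the unit circle of angular length $<2\pi/100$,
\[\Big|\Exp_{n\in I}F(g(n)\Gamma)-\int_{G/\Gamma}F\Big|=\Big|\Exp_{n\in I}e(P(n))\Big|\geq\cos(\pi/100)>\tfrac12.\]
Set $\delta=(200dR)^{-1}$. Then $I$ is a sub-progression of $[N]$ with $|I|=L\geq N/(200dR)=\delta N$, while $\delta\|F\|\leq(200dR)^{-1}\cdot 7R=7/(200d)<\tfrac12$; hence $\{g(n)\Gamma\}_{n\in I}$ fails to be $\delta$-equidistributed, and therefore $\{g(n)\Gamma\}_{n\in[N]}$ is not totally $\delta$-equidistributed for this $\delta=(O(R))^{-1}$, which is the assertion. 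The one remaining bookkeeping task is to fix the implicit constant in $N\gg R$ (equivalently, the constant in $\delta=(O(R))^{-1}$) compatibly with the inequality $L\geq\delta N$, which is routine.
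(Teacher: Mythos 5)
Your proposal is correct and follows essentially the same route as the paper's proof: the same test function $e(\eta(\cdot))$ with zero mean and Lipschitz norm $O(R)$, the same observation that $\|\eta\circ g\|_{C^\infty([N])}\leq R$ forces the phase to be nearly constant on an initial segment of length $\asymp N/R$, and the same conclusion that the average over that segment exceeds $\frac12$ while $\delta\|F\|<\frac12$. Your Step 2 merely spells out via Pascal's rule the increment bound that the paper states directly as $\|\eta\circ g(n)-\eta\circ g(0)\|_{\bR/\bZ}\ll RnN^{-1}$.
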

\begin{proof}Because $\|\eta\circ g\|_{C^\infty([N])}\leq R$,
$\|\eta\circ g(n)-\eta\circ g(0)\|_{\bR/\bZ}\ll RnN^{-1}
$. This implies that for the the mapping $\teta(x)=\exp(2\pi i \eta(x))$ from $G/\Gamma$ to the unit circle in $\bC$, the values of $\teta(g(n))$ are within distance $\ll R\delta$ to each other for $0<n\leq\delta N$. Using the convention in Notation \ref{NotationMain}, one can assume that the implicit constant here is $\ref{CONSTImplicit}$. In particular,
\begin{equation}\begin{aligned}
\Big|\Exp_{0<n\leq \delta N}\teta(g(n)\Gamma)\Big|>1-\ref{CONSTImplicit}R\delta\geq\frac12,\end{aligned}\end{equation} if $\delta<\frac12\ref{CONSTImplicit}^{-1}R^{-1}$. Because $\eta$ is a non-zero character, $\teta$ has zero average on $G/\Gamma$. In addition, $\|\teta\|_{G/\Gamma}\leq 2\pi|\eta|\leq 2\pi R$.  It follows that the sequence $\{g(h)\Gamma\}_{h\in [H]}$ is not totally $\min(\frac12\ref{CONSTImplicit}^{-1}R^{-1},\frac1{4\pi}R^{-1})$-equidistributed in $G/\Gamma$.\end{proof}

\begin{lemma}\label{LemEDTotalED} If $\delta\in(0,1)$ and there exists an interval $\cA\subseteq[N]$ of length at least $\delta N$ such that $\{g(n)\}_{n\in\cA}$ is not $\delta$-equidistributed in $G/\Gamma$, then for some $N'\in[\frac{\delta^2}2N,N]$, $(g(n))_{n\in[N']}$ is not $\frac{\delta^2}2$-equidistributed in $G/\Gamma$.\end{lemma}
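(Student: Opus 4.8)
\emph{Proof strategy.} The plan is to reduce the statement to a one-dimensional fact about partial sums of a single witness function. Since $\{g(n)\}_{n\in\cA}$ is not $\delta$-equidistributed, there is a Lipschitz $F$ with $\|F\|\leq 1$ and $\big|\Exp_{n\in\cA}F(g(n))-\int_{G/\Gamma}F\big|>\delta$; replacing $F$ by $\re(e^{i\theta}F)$ for a suitable $\theta$, and by $-F$ if needed, I may assume $F$ is real-valued (still with $\|F\|\leq 1$) and that $\sum_{n\in\cA}\big(F(g(n))-\int F\big)>\delta|\cA|$. Write $\cA=\{a+1,\dots,b\}$, so $L:=b-a=|\cA|\geq\delta N$ and $0\leq a<b\leq N$, and set $\Phi(M):=\sum_{n=1}^{M}\big(F(g(n))-\int F\big)$ for $0\leq M\leq N$, with $\Phi(0)=0$. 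Then $\Phi(b)-\Phi(a)>\delta L\geq\delta^2 N$, while $|F|\leq\|F\|_{C^0}\leq 1$ gives the discrete Lipschitz bound $|\Phi(M)-\Phi(M')|\leq 2|M-M'|$. Observe that $\{g(n)\}_{n\in[N']}$ fails to be $\tfrac{\delta^2}{2}$-equidistributed exactly when $|\Phi(N')|>\tfrac{\delta^2}{2}N'$, so the goal is to produce such an $N'\in[\tfrac{\delta^2}{2}N,N]$.

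First I would argue by contradiction, assuming $|\Phi(N')|\leq\tfrac{\delta^2}{2}N'$ for every integer $N'\in[\tfrac{\delta^2}{2}N,N]$. Two observations drive the argument: $b$ itself lies in this range (since $b\geq L\geq\delta N\geq\tfrac{\delta^2}{2}N$ and $b\leq N$), so $|\Phi(b)|\leq\tfrac{\delta^2}{2}b\leq\tfrac{\delta^2}{2}N$; and from $\Phi(b)-\Phi(a)>\delta^2 N$ at least one of $|\Phi(a)|,|\Phi(b)|$ exceeds $\tfrac{\delta^2}{2}N$, hence exceeds $\tfrac{\delta^2}{2}$ times the corresponding endpoint (both being $\leq N$). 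If that endpoint is $b$, we are done; and if it is $a$ and $a\geq\tfrac{\delta^2}{2}N$ we are also done — in fact then $|\Phi(a)|\leq\tfrac{\delta^2}{2}a$ and $|\Phi(b)|\leq\tfrac{\delta^2}{2}b$ combine to give $\delta^2 N<\Phi(b)-\Phi(a)\leq\delta^2 N$, a contradiction outright.

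The remaining and only delicate case is when the large endpoint is $a$ but $a<\tfrac{\delta^2}{2}N$, so that $[a]$ is not an admissible choice of $N'$. Here I would play the crude bound $|\Phi(a)|\leq 2a$ off against the assumed equidistribution: combining $\Phi(b)-\Phi(a)>\delta L$ with $|\Phi(b)|\leq\tfrac{\delta^2}{2}(a+L)$ gives $|\Phi(a)|>\delta L(1-\tfrac\delta2)-\tfrac{\delta^2}{2}a$, and comparing with $|\Phi(a)|\leq 2a$ forces $a$ to be a definite fraction of $\tfrac{\delta^2}{2}N$. Then $N':=\lceil\tfrac{\delta^2}{2}N\rceil$ lies in range with $a<N'\leq b$, and since $|\Phi(N')-\Phi(a)|\leq 2(N'-a)$ is small relative to $|\Phi(a)|$ one obtains $|\Phi(N')|>\tfrac{\delta^2}{2}N'$; when $\delta$ is so close to $1$ that $\cA$ occupies almost all of $[N]$ one instead takes $N'=N$ and uses $|\Phi(N)|\geq(\Phi(b)-\Phi(a))-2(N-|\cA|)>\tfrac{\delta^2}{2}N$. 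The main obstacle I anticipate is exactly this last case: getting the constants in the two competing estimates to close over the whole range $\delta\in(0,1)$, rather than just for small $\delta$. Everything else is triangle-inequality bookkeeping, and the whole argument implicitly requires $N$ large enough that the $O(1)$ errors coming from the ceiling $\lceil\tfrac{\delta^2}{2}N\rceil$ are negligible — harmless in the regime where the lemma is used.
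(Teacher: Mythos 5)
Your reduction to the partial sums $\Phi$ and the dichotomy ``$a$ above or below $\tfrac{\delta^2}{2}N$'' is the same skeleton as the paper's proof, which writes $\Exp_{n\in\cA}F$ as $\tfrac{\theta_2}{\theta}\Exp_{n\in[N_2]}F-\tfrac{\theta_1}{\theta}\Exp_{n\in[N_1]}F$, i.e.\ works with exactly your $\Phi(a)$ and $\Phi(b)$. The case $a\ge\tfrac{\delta^2}{2}N$ is fine. The gap is precisely where you flagged it: the case $a<\tfrac{\delta^2}{2}N$ does not close. Your contradiction hypothesis gives $|\Phi(a)|>\delta L-\tfrac{\delta^2}{2}b\ge\tfrac{\delta^2}{2}N$, and with $|\Phi(a)|\le 2a$ this forces $a>\tfrac{\delta^2}{4}N$; but then $N'-a$ with $N'=\lceil\tfrac{\delta^2}{2}N\rceil$ can still be nearly $\tfrac{\delta^2}{4}N$, so the transfer loss $2(N'-a)$ can be nearly $\tfrac{\delta^2}{2}N$, i.e.\ as large as the guaranteed size of $|\Phi(a)|$ itself, and $|\Phi(a)|-2(N'-a)$ need not exceed $\tfrac{\delta^2}{2}N'$. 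The fallback $N'=N$ requires $(2+\delta)\delta-2>\tfrac{\delta^2}{2}$, i.e.\ $\delta>2\sqrt{2}-2\approx 0.83$. For $\delta=\tfrac34$ neither estimate yields anything, and this is not loose bookkeeping: using only the data you retain ($\Phi$ is $2$-Lipschitz, $\Phi(0)=0$, $\Phi(b)-\Phi(a)>\delta L$, $L\ge\delta N$), the desired conclusion is actually false. For instance with $N=1$, $\delta=\tfrac34$, $a=0.16$, $b=0.91$, one can draw a $2$-Lipschitz $\Phi$ with $\Phi(0)=0$, $\Phi(a)=-0.31$, $\Phi(b)=0.2559$ (so $\Phi(b)-\Phi(a)>0.5625=\delta L$) that satisfies $|\Phi(s)|\le\tfrac{\delta^2}{2}s$ on all of $[\tfrac{\delta^2}{2},1]$. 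So no choice of $N'$ rescues the argument as set up.

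What is missing is the paper's one real move in this case: normalize the witness so that $\int_{G/\Gamma}F=0$; then $|\Phi(a)|\le a\,\|F\|_{C^0}\le a$, i.e.\ the Lipschitz constant of $\Phi$ is $1$, not $2$. Your factor $2$ comes from bounding $|F-\int F|\le 2$, and it is exactly this factor that kills the middle range of $\delta$. With $|\Phi(a)|\le a<\tfrac{\delta^2}{2}N\le\tfrac{\delta}{2}L$ one gets directly $|\Phi(b)|\ge|\Phi(b)-\Phi(a)|-|\Phi(a)|>\delta L-\tfrac{\delta}{2}L=\tfrac{\delta}{2}L\ge\tfrac{\delta^2}{2}N\ge\tfrac{\delta^2}{2}b$, so $N'=b$ works outright --- no contradiction argument, no detour through $\lceil\tfrac{\delta^2}{2}N\rceil$, and no rounding (your ceiling silently needs $\delta^2N\gg1$, which the statement does not assume; the paper only ever takes $N'\in\{a,b\}$). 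Two minor further points: your ``exactly when $|\Phi(N')|>\tfrac{\delta^2}{2}N'$'' is only an ``if'' (non-equidistribution could be witnessed by a different function), though that is the direction you use; and the mean-zero normalization is itself not quite free, since it can inflate the Lipschitz norm of the witness by a factor up to $2$ --- a point the paper's proof also glosses over.
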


\begin{proof} One may write $\cA=\{N_1< n\leq N_2\}=[N_2]\setminus[N_1]$. Write $\theta_i=\frac{N_i}N$ and $\theta=\theta_2-\theta_1$, then $\theta\geq\delta$.

There exists a Lipschitz function $F$ on $G'/\Gamma'$ with $\int_{G/\Gamma}F=0$ such that
$$\left|\frac{\theta_2}{\theta}\Exp_{n\in[N_2]}F(g(n)\Gamma)-\frac{\theta_1}{\theta}\Exp_{n\in[N_1]}F(g(n)\Gamma)\right|=\left|\Exp_{n\in\cA}F(g(n)\Gamma)\right|>\delta\|F\|.$$

If $\theta_1\geq\frac{\delta^2}2$ and $\left|\Exp_{n\in[N_1]}F(g(n)\Gamma)\right|>\frac{\delta^2}2\|F\|$, then $N_1\geq\frac{\delta^2}2N$ and $(g(n))_{n\in[N_1]}$ is not $\frac{\delta^2}2$-equidistributed.

Otherwise, either $\theta_1<\frac{\delta^2}2$ or $\left|\Exp_{n\in[N_1]}F(g(n)\Gamma)\right|<\frac{\delta^2}2\|F\|$. In both cases, $$\left|\frac{\theta_1}{\theta}\Exp_{n\in[N_1]}F(g(n)\Gamma)\right|<\frac{\delta^2}2\|F\|,$$ 
and thus $$\left|\Exp_{n\in[N_2]}F(g(n)\Gamma)\right|\geq\left|\frac{\theta_2}{\theta}\Exp_{n\in[N_2]}F(g(n)\Gamma)\right|>\delta\|F\|-\frac{\delta^2}2\|F|\geq \frac\delta2\|F\|.$$ So $(g(n))_{n\in[N_2]}$ is not $\frac\delta2$-equidistributed. Moreover, $N_2\geq\theta N \geq\delta N$.
 \end{proof}

For a map $g:\bZ^r\to G$, the derivative along  $\bfh\in\bZ^r$ is
\begin{equation}\partial_\bfh g(\bfn)=g(\bfn+\bfh)g(\bfn)^{-1}.\end{equation}

\begin{definition}\label{DefPoly}A map $g:\bZ^r\to G$ is a {\bf polynomial map with respect to $G_\bullet$} if for all $i$ and $l_1,\cdots,l_i,n\in\bZ$, the $i$-th derivative $\partial_{l_1}\cdots\partial_{l_i}g(n)$ takes values in $G_i$. The set of
polynomial sequences with respect to $G_\bullet$ is noted by $\Poly(\bZ^r, G_\bullet)$.\end{definition}

The family of $\Poly(\bZ^r, G_\bullet)$ is known to be a group (Lazard \cite{L54}, Leibman \cites{L98,L02} and Green-Tao \cite{GT12a}). A description of $\Poly(\bZ^r, G_\bullet)$ was given in Leibman and Green-Tao's works:

\begin{lemma}\label{LemPolyCoeff}(\cite{L10}*{\S 4},\cite{GT12a}*{\S 6}) Suppose $\cV$ is a Mal'cev basis adapted to $(G_\bullet,\Gamma)$, then $g\in\Poly(\bZ^r,G)$ if and only if $\psi_\cV(g(\bfn))$ has the form
$$\psi_\cV(g(\bfn))=\sum_{\bfj\in\bZ_{\geq 0}^r}\omega_\bfj\binom{n_1}{j_1}\cdots\binom{n_r}{j_r},$$ where $\omega_\bfj\in\bR^m$ and $(\omega_\bfj)_i=0$ for all $i\leq m-m_{|\bfj|}$ with $|\bfj|=j_1+\cdots+j_r$.\end{lemma}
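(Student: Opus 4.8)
The plan is to prove the two implications separately, first singling out a family of elementary polynomial sequences and then bootstrapping to the general case via the group structure of $\Poly(\bZ^r,G_\bullet)$ (Lazard, Leibman, Green--Tao) and an induction on the degree $d$ of $G_\bullet$. The elementary input I would isolate is: for $V\in\gog_k$ and a multi-index $\mathbf{j}\in\bZ_{\geq 0}^r$ with $|\mathbf{j}|=k$, the sequence $g(\mathbf{n})=\exp\big(\binom{n_1}{j_1}\cdots\binom{n_r}{j_r}V\big)$ lies in $\Poly(\bZ^r,G_\bullet)$. Indeed, since all the $\exp(tV)$ commute, a direct computation gives $\partial_{l_1}\cdots\partial_{l_i}g(\mathbf{n})=\exp\big((\Delta_{l_1}\cdots\Delta_{l_i}p)(\mathbf{n})\,V\big)$, where $p(\mathbf{n})=\binom{n_1}{j_1}\cdots\binom{n_r}{j_r}$ and $\Delta_l p(\mathbf{n})=p(\mathbf{n}+\mathbf{e}_l)-p(\mathbf{n})$: for $i\leq k$ this value lies in $G_k\subseteq G_i$, and for $i>k$ the iterated difference of the degree-$k$ polynomial $p$ vanishes, so the value is $e\in G_i$.

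\emph{The ``if'' direction.} Assume $\psi_\cV(g(\mathbf{n}))=\sum_{\mathbf{j}}\omega_{\mathbf{j}}\binom{n_1}{j_1}\cdots\binom{n_r}{j_r}$ with $(\omega_{\mathbf{j}})_i=0$ whenever $i\leq m-m_{|\mathbf{j}|}$. Expanding $\psi_\cV$ coordinate by coordinate, $g(\mathbf{n})=\prod_{i=1}^m\exp(\omega_i(\mathbf{n})V_i)$ with $\omega_i(\mathbf{n})=\sum_{\mathbf{j}}(\omega_{\mathbf{j}})_i\binom{n_1}{j_1}\cdots\binom{n_r}{j_r}$, and by property (ii) of the Mal'cev basis the support hypothesis says precisely that $(\omega_{\mathbf{j}})_i\neq 0$ only when $V_i\in\gog_{|\mathbf{j}|}$. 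Hence $g$ is a finite product of elementary sequences of the type above, and since $\Poly(\bZ^r,G_\bullet)$ is a group, $g\in\Poly(\bZ^r,G_\bullet)$.

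\emph{The ``only if'' direction.} I would induct on $d$, the case $d\leq 1$ reducing to the abelian computation (for $d=1$ the vanishing of $2$-fold differences forces each coordinate to be affine, and the support condition is then vacuous). For the inductive step, pass to $\bar G=G/G_d$ with its quotient filtration $\bar G_\bullet$, which has degree $\leq d-1$; the first $m-m_d$ vectors of $\cV$ descend to a Mal'cev basis $\bar\cV$ of $\bar G$ adapted to $(\bar G_\bullet,\bar\Gamma)$, and because $\gog_d$ is spanned by the last $m_d$ vectors of $\cV$, the coordinates $\psi_{\bar\cV}(\bar g(\mathbf{n}))$ are exactly the first $m-m_d$ coordinates of $\psi_\cV(g(\mathbf{n}))$. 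Applying the inductive hypothesis to $\bar g\in\Poly(\bZ^r,\bar G_\bullet)$ and unwinding $\bar m_k=m_k-m_d$ turns its support condition into $(\omega_{\mathbf{j}})_i=0$ for $i\leq m-m_{|\mathbf{j}|}$ throughout the range $i\leq m-m_d$, and forces $\omega_{\mathbf{j}}=0$ there whenever $|\mathbf{j}|\geq d$. Next, let $g_0(\mathbf{n})$ be the element of $G$ whose $\cV$-coordinates are those first $m-m_d$ coordinates followed by zeros; by the ``if'' direction $g_0\in\Poly(\bZ^r,G_\bullet)$, so $h:=g_0^{-1}g\in\Poly(\bZ^r,G_\bullet)$, and $h$ takes values in the central abelian subgroup $G_d$ (because $\bar h=e$ and $[G,G_d]=\{e\}$). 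For such an $h$, the filtration condition reduces to the abelian one: all $(d+1)$-fold additive differences of $h$ in $\gog_d$ vanish, so $\psi_\cV(h(\mathbf{n}))$, which occupies the last $m_d$ coordinates, is a polynomial of degree $\leq d$ in the binomial basis with all coefficients in $\gog_d$. On that coordinate block the support condition is automatic: for $|\mathbf{j}|\leq d$ one has $m_{|\mathbf{j}|}\geq m_d$ and hence nothing to check, while for $|\mathbf{j}|>d$ the coefficient vanishes. Adding $\psi_\cV(g_0(\mathbf{n}))$ and $\psi_\cV(h(\mathbf{n}))$ coordinatewise --- legitimate since $h$ is central and supported on the last $m_d$ slots --- then gives $\psi_\cV(g(\mathbf{n}))$ the asserted form.

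\emph{Main obstacle.} The conceptual ingredients above are all standard, so the real work lies in the index bookkeeping of the inductive step: correctly matching the coordinate blocks of $G$, $G/G_d$ and $G_d$; checking that the quotient support condition unwinds exactly to $(\omega_{\mathbf{j}})_i=0$ for $i\leq m-m_{|\mathbf{j}|}$; and verifying that the support condition holds automatically on the top $m_d$ coordinates. One also leans on the cited facts that $\Poly(\bZ^r,G_\bullet)$ is a group and that the rational central subgroup $G_d$ induces compatible Mal'cev data on $G/G_d$.
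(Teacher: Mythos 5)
The paper does not prove this lemma; it is quoted verbatim from Leibman and Green--Tao, so there is no internal proof to compare against. Your argument is a correct, self-contained rendition of the standard proof from those references: the elementary sequences $\exp\big(\binom{n_1}{j_1}\cdots\binom{n_r}{j_r}V\big)$ with $V\in\gog_{|\mathbf{j}|}$, the group property of $\Poly(\bZ^r,G_\bullet)$ for the ``if'' direction, and induction on the degree of the filtration via the central quotient $G/G_d$ for the ``only if'' direction; the coordinate bookkeeping you flag as the main obstacle does work out exactly as you describe. The only point worth making explicit is that in reducing $h=g_0^{-1}g$ to the abelian case you must use the vanishing of \emph{all} $(d+1)$-fold derivatives, including mixed ones (equivalently, derivatives along arbitrary directions $\bfh\in\bZ^r$), to conclude total degree $\leq d$ rather than degree $\leq d$ in each variable separately.
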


In particular, if $|\bfj|>d$, then $m_{|\bfj|}=0$ and thus $\omega_\bfj=0$.

In the rest of this paper we will work under the following work hypothesis

\begin{hypothesis}\label{HypMain}$G/\Gamma$ is an $m$-dimensional compact nilmanifold with a degree $d$ rational filtration $G_\bullet$, and $\cV$ is an $R_0$-rational Mal'cev basis adapted to $(G_\bullet, \Gamma)$, where $R_0>10$. Moreover, $g\in\Poly(\bZ^2,G_\bullet)$ is a polynomial map determined by coefficients $\{\omega_{j,k}\}_{j,k\in\bZ_{\geq 0}}$ as in Lemma \ref{LemPolyCoeff}. Let $R\geq R_0$ be a parameter to be determined later.
In particular, $\cV$ is also an $R$-rational Mal'cev basis  adapted to $(G_\bullet, \Gamma)$.\end{hypothesis}

The formula in Lemma \ref{LemPolyCoeff} writes in this case as:
\begin{equation}\label{EqPolyCoeff}\psi_\cV(g(n,h))=\sum_{\substack{j,k\geq 0\\j+k\leq d}}\omega_{jk}\binom{n}{j}\binom{h}{k},\end{equation} where $(\omega_{jk})_i=0$ for all $i\leq m-m_{j+k}$.

\section{Quantitaive factorization theorem for 2-parameter polynomials}\label{SecLeibman}

We now state Green-Tao's effectivization of a theorem of Leibman \cite{L05}, and deduce a variation of it that is refined to our situation.

\begin{proposition}\label{PropLeibmanGT} \cite{GT12a}*{Theorem 2.9} Suppose $G/\Gamma$ is an $m$-dimensional compact nilmanifold with a degree $d$ rational filtration $G_\bullet$, and $\cV$ is an $R$-rational Mal'cev basis adapted to $(G_\bullet, \Gamma)$ where $R\geq 10$. For $f\in\Poly(\bZ,G_\bullet)$, and $N\in\bN$ such that $N\gg  R^{O(1)}$, at least one of the following holds:\begin{enumerate}
\item\label{PropLeibmanGT1} either $\{f(n)\Gamma\}_{n\in[N]}$ is $ R^{-1}$-equidistributed in $G/\Gamma$;
\item\label{PropLeibmanGT2} or there exists a horizontal character $\eta$ of $G/\Gamma$ of modulus $|\eta|\leq  R^{O(1)}$ such that $\|\eta\circ f\|_{C^\infty([N])}\leq  R^{O(1)}$.\end{enumerate}\end{proposition}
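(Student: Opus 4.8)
The plan is to prove Proposition \ref{PropLeibmanGT} by induction on the degree $d$ of the filtration $G_\bullet$, following the strategy of \cite{GT12a}; the base case, reached once the ambient group has become abelian, is quantitative Weyl equidistribution. Throughout, assume (1) fails: there is a Lipschitz $F$ with $\|F\|\le 1$, $\int_{G/\Gamma}F=0$ and $|\Exp_{n\in[N]}F(f(n)\Gamma)|>R^{-1}$, and the task is to produce a horizontal character as in (2). In the base case $G/\Gamma\cong\bT^m$ and $f$ is a vector-valued polynomial of degree $\le d$; non-equidistribution forces $|\Exp_{n\in[N]}e(k\cdot f(n))|\gg R^{-O(1)}$ for some non-trivial $k\in\bZ^m$ with $|k|\ll R^{O(1)}$, and a Vinogradov--Weyl estimate together with Lemma \ref{LemCoeffBound} shows that the horizontal character $\eta(x)=k\cdot x$ satisfies $\|\eta\circ f\|_{C^\infty([N])}\ll R^{O(1)}$.

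For the inductive step, exploit the torus-bundle structure over the top fiber: $G_d$ is central, $G_d/\Gamma_d\cong\bT^{m_d}$, so expand $F=\sum_\xi F_\xi$ into vertical Fourier components $F_\xi(zx)=e(\xi\cdot z)F_\xi(x)$ for $z\in G_d$. The Lipschitz bound makes the high-frequency tail $\sum_{|\xi|\gg R^{O(1)}}\|F_\xi\|_{C^0}$ negligible against $R^{-1}$, so after truncation and pigeonholing there is a single frequency $\xi$ with $|\xi|\ll R^{O(1)}$, $\|F_\xi\|\ll R^{O(1)}$ and $|\Exp_{n\in[N]}F_\xi(f(n)\Gamma)|\gg R^{-O(1)}$. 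If $\xi=0$, then $F_\xi$ is $G_d$-invariant, hence descends to $(G/G_d)/(\Gamma G_d/G_d)$, whose induced filtration $G_\bullet/G_d$ has degree $d-1$ and an $R^{O(1)}$-rational Mal'cev basis; the projected polynomial sequence $f'$ is not $R^{-O(1)}$-equidistributed, so the inductive hypothesis supplies a horizontal character $\eta'$ of modulus $\ll R^{O(1)}$ with $\|\eta'\circ f'\|_{C^\infty([N])}\ll R^{O(1)}$, and since $d\ge 2$ gives $G_d\subseteq[G,G]$ this lifts to a horizontal character $\eta$ of $G/\Gamma$ with $\eta\circ f=\eta'\circ f'$.

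The remaining case $\xi\neq 0$ is the crux. Apply van der Corput's inequality with a window $H\asymp R^{-O(1)}N$: there is a set of $\gg R^{-O(1)}H$ shifts $h$ for which $|\Exp_{n\in[N]}F_\xi(f(n+h)\Gamma)\overline{F_\xi(f(n)\Gamma)}|\gg R^{-O(1)}$. For each such $h$, study the differenced sequence $n\mapsto(f(n+h),f(n))$ in an auxiliary product nilmanifold; since $G_d$ is central, the function $(x,y)\mapsto F_\xi(x)\overline{F_\xi(y)}$ is invariant under the diagonal central subgroup $\{(z,z):z\in G_d\}$, so it descends to a nilmanifold whose filtration has the strictly smaller degree $d-1$ (the relevant construction---a quotient of $G^\square=G\times_{G_2}G$---together with the preliminary reduction needed to place the differenced orbit inside it, is as in \cite{GT12a}). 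The inductive hypothesis there yields, for each good $h$, a horizontal character $\bar\eta_h$ of modulus $\ll R^{O(1)}$ with controlled $C^\infty([N])$-norm along the differenced orbit.

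It remains to pass from this $h$-indexed family to a single horizontal character of $G/\Gamma$. Pigeonhole over the boundedly many possibilities for $\bar\eta_h$ to fix one $\bar\eta$ valid for $\gg R^{-O(1)}H$ shifts $h$; unwinding the quotient construction rewrites $\bar\eta$ along $(f(n+h),f(n))$ in terms of the first difference $(\eta\circ f)(n+h)-(\eta\circ f)(n)$ of a fixed horizontal character $\eta$ of $G/\Gamma$, so the $C^\infty([N])$-control for many $h$, fed through Lemma \ref{LemCoeffBound}, Lemma \ref{LemCNormBound}, and an elementary descent on the polynomial coefficients of $\eta\circ f$ (recover the leading coefficient from the differences, then the lower ones, absorbing denominators into the modulus of $\eta$), yields $\|\eta\circ f\|_{C^\infty([N])}\ll R^{O(1)}$, which is (2). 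I expect the main obstacles to be confined to this $\xi\neq 0$ branch: arranging the product-group reduction so that the differenced orbit sits on a genuinely lower-degree nilmanifold while keeping every rationality and modulus bound polynomial in $R$, and the final extraction of $\eta$ in the shift variable $h$; the $\xi=0$ branch and the abelian base case are routine.
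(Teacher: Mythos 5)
The paper does not prove this proposition; it is imported verbatim as Theorem 2.9 of \cite{GT12a}. Your sketch faithfully reproduces the Green--Tao argument from that source (induction on the filtration degree, vertical Fourier reduction to a single frequency $\xi$, descent to $G/G_d$ when $\xi=0$, and van der Corput plus the $G^\square=G\times_{G_2}G$ reduction with pigeonholing in $h$ when $\xi\neq 0$), so it is the same approach as the cited proof.
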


\begin{corollary}\label{CorLeibman}In Proposition \ref{PropLeibmanGT}, one may replace in part \eqref{PropLeibmanGT1} the property ``$ R^{-1}$-equidistributed'' by ``totally $ R^{-1}$-equidistributed''.\end{corollary}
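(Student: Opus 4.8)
The plan is to bootstrap from single-scale equidistribution to total equidistribution by the standard dichotomy-on-subprogressions argument, quantitatively tracked. Suppose $\{f(n)\Gamma\}_{n\in[N]}$ is \emph{not} totally $R^{-1}$-equidistributed. By definition, there is an arithmetic progression $\cA=\{qn+r\}_{n\in[N']}\subseteq[N]$ with $N'\geq R^{-1}N$ on which $\{f(n)\Gamma\}_{n\in\cA}$ fails to be $R^{-1}$-equidistributed. The map $n\mapsto f(qn+r)$ is again a polynomial sequence in $\Poly(\bZ,G_\bullet)$ — here I would invoke that $\Poly(\bZ,G_\bullet)$ is a group closed under affine substitutions of the variable (Lazard--Leibman--Green--Tao, cited after Definition \ref{DefPoly}). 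Hence Proposition \ref{PropLeibmanGT} applies to $f(q\cdot+r)$ on $[N']$, provided $N'\gg R^{O(1)}$, which holds once $N\gg R^{O(1)}$ with a slightly larger exponent. Since equidistribution fails on $[N']$, alternative \eqref{PropLeibmanGT2} must hold: there is a horizontal character $\eta$, $|\eta|\leq R^{O(1)}$, with $\|\eta\circ f(q\cdot+r)\|_{C^\infty([N'])}\leq R^{O(1)}$.

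Next I would convert this back to a statement about $f$ on $[N]$ itself. Write $\eta\circ f(n)=\sum_{i=0}^d\alpha_i\binom{n}{i}$; then $\eta\circ f(qn+r)$ is a degree $\leq d$ polynomial in $n$ whose coefficients are $\bZ$-linear combinations (with coefficients of size $O_d(q^{O_d(1)})$, and $q\leq R$) of the $\alpha_i$, evaluated against powers of $q$ and $r$. Using Lemma \ref{LemCoeffBound} to pass between the $\binom{n}{i}$-coefficients and the monomial coefficients, the bound $\|\eta\circ f(q\cdot+r)\|_{C^\infty([N'])}\leq R^{O(1)}$ together with $N'\geq R^{-1}N$ forces $\|q_0\,\eta\circ f\|_{C^\infty([N])}\leq R^{O(1)}$ for some integer $q_0\ll R^{O(1)}$, after multiplying through by a bounded denominator $D=O_d(1)$ to clear the binomial-coefficient conversions. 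Replacing $\eta$ by $q_0\eta$ (still a horizontal character, of modulus $\leq R^{O(1)}$) yields a horizontal character witnessing alternative \eqref{PropLeibmanGT2} for $f$ on $[N]$. Thus $f$ on $[N]$ is either totally $R^{-1}$-equidistributed or satisfies \eqref{PropLeibmanGT2}, which is exactly the asserted strengthening (absorbing all the $R^{O(1)}$ losses into the implied exponents, as permitted by Notation \ref{NotationMain}).

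The one point requiring care — and the main obstacle — is the bookkeeping in the passage from $\|\eta\circ f(q\cdot+r)\|_{C^\infty([N'])}$ back to a $C^\infty([N])$-bound on a bounded multiple of $\eta\circ f$: one must check that the rescaling by $q\leq R$ and the restriction to a subinterval of proportional length $\geq R^{-1}$ each cost only a factor $R^{O(1)}$, and that the conversion between the $\binom{n}{i}$ basis and the monomial basis (Lemma \ref{LemCoeffBound}) introduces only a bounded denominator. An alternative, possibly cleaner route for this step is to use Lemma \ref{LemCNormBound}: non-$R^{-1}$-equidistribution on $\cA$ means $\eta\circ f$ stays in a short arc for a positive proportion of $n\in\cA$, hence for a proportion $\gg R^{-O(1)}$ of $n\in[N]$, and Lemma \ref{LemCNormBound} then directly produces an integer $D\ll R^{O(1)}$ with $\|D(\eta\circ f)\|_{C^\infty([N])}\ll R^{O(1)}$. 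Either way the losses are polynomial in $R$ and absorbed into the $O(1)$ exponents, completing the proof.
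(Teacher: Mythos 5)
Your argument is structurally the same as the paper's: argue by contraposition, reparametrize the bad subprogression, apply Proposition \ref{PropLeibmanGT} to the substituted polynomial sequence, and convert the resulting $C^\infty$ bound on $\eta\circ f(q\cdot+r)$ into a bound on $\|D\eta\circ f\|_{C^\infty([N])}$. Two points need repair. First, the conversion step you single out as the main obstacle is precisely where the paper cites \cite{GT12a}*{7.10}, and your bookkeeping for it is not right as stated: in your parametrization the offset $r$ can be as large as $N$, so the coefficients expressing $\eta\circ f(qn+r)$ in terms of those of $\eta\circ f$ involve powers $r^{i-j}\leq N^{i-j}$ and are certainly not of size $O_d(q^{O_d(1)})$. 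The correct argument is a downward induction from the top degree: the leading coefficient only picks up $q^d$, and the already-established smallness $\|q^{O(1)}\beta_i\|_{\bR/\bZ}\ll R^{O(1)}N^{-i}$ of the higher coefficients absorbs the large powers of $r$ appearing in the lower ones. (The paper sidesteps this by writing the progression as $\{bn+a\}$ with $0\leq a<b\leq R$ and using Lemma \ref{LemEDTotalED} to pass from the resulting interval of indices to an initial segment $[N']$, so that the substitution has small offset.) Second, your ``cleaner alternative'' via Lemma \ref{LemCNormBound} does not go through: from $\|\eta\circ f(q\cdot+r)\|_{C^\infty([N'])}\leq R^{O(1)}$ one only gets the values of $\eta\circ f$ into an arc of length $\epsilon$ on a subset of $[N]$ of density $\delta\ll \epsilon R^{-O(1)}$, and both quantities scale the same way as you shrink the subinterval, so the hypothesis $\epsilon<\delta/2$ of that lemma can never be met. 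The first route, justified by the cited lemma from \cite{GT12a}, is the one to keep.
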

\begin{proof}Suppose $\{f(n)\Gamma\}_{n\in[N]}$ is  not totally $ R^{-1}$-equidistributed. There exist integers $0\leq a<b\leq  R$, and an interval $\cA\subseteq[\frac Nb]$ of length at least $ R^{-1}N$, such that the sequence $\{\tf(n)\Gamma\}_{n\in\cA}$  is not $ R^{-1}$-equidistributed, where $\tf(n)=f(bn+a)$.  By Lemma \ref{LemEDTotalED}, there exists $N'<N$ with $N'\geq \frac12 R^{-2}\cdot \#\cA\geq  R^{-O(1)}N$ such that $\{\tf(n)\Gamma\}_{n\in[N']}$  is not $ R^{-1}$-equidistributed. By Proposition \ref{PropLeibmanGT}, there exists a horizontal character $\eta$ such that $0<|\eta|< R^{O(1)}$ and $\|\eta\circ \tf\|_{C^\infty([N'])}\leq R^{O(1)}$. As $N'\geq  R^{-O(1)}N$, this implies that $\|\eta\circ \tf\|_{C^\infty([N])}\leq  R^{O(1)}$, which in turn implies by \cite{GT12a}*{7.10} that there is a positive integer $D\leq R^{O(1)}$ such that $\|D\eta\circ f\|_{C^\infty([N])}\ll  R^{O(1)}$. The corollary then follows after replacing $\eta$ with $D\eta$.\end{proof}

\begin{corollary}\label{CorChangeLattice}Suppose $G$ is an $m$-dimensional simply connected Lie group with a degree $d$ rational filtration $G_\bullet$, and $\Gamma_j$ is a lattice in $G$ for $j=1,2$ and $\cV_j$ is an $R$-rational Mal'cev basis adapted to $(G_\bullet, \Gamma_j)$. Assume in addition that elements in $\cV_2$ are $R$-rational combinations of elements in $\cV_1$.

For $f\in\Poly(\bZ,G_\bullet)$, and $N\in\bN$ such that $N\gg  R^{O(1)}$, if $\{f(n)\Gamma_1\}_{n\in[N]}$ is not totally $R^{-1}$-equidistributed in $G/\Gamma_1$, then $\{f(n)\Gamma_2\}_{n\in[N]}$ is not totally $R^{-O(1)}$-equidistributed in $G/\Gamma_2$.\end{corollary}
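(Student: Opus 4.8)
The plan is to work entirely in the obstruction language of Proposition~\ref{PropLeibmanGT} and to transport a single horizontal character from $G/\Gamma_1$ to $G/\Gamma_2$. Since $N\gg R^{O(1)}$ and $\{f(n)\Gamma_1\}_{n\in[N]}$ is not totally $R^{-1}$-equidistributed in $G/\Gamma_1$, Corollary~\ref{CorLeibman} yields a non-trivial horizontal character $\eta_1$ of $G/\Gamma_1$ with $|\eta_1|\leq R^{O(1)}$ (modulus taken with respect to $\cV_1$) and $\|\eta_1\circ f\|_{C^\infty([N])}\leq R^{O(1)}$. Regard $\eta_1$ as a homomorphism $G\to\bR/\bZ$; writing it as $\eta_1(g)=a\cdot\psi_{\cV_1}(g)$ with $a$ supported on the first $m-m_2$ coordinates, one sees $\eta_1$ kills $G_2$ (hence $[G,G]$), but it need not vanish on $\Gamma_2$. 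So the task is to correct this by multiplying by a bounded integer.

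Next I would exploit the $R$-rationality of $\cV_2$ over $\cV_1$. If $\cV_2=\{V_1',\dots,V_m'\}$, then $\Gamma_2$ is generated by $\exp(V_1'),\dots,\exp(V_m')$, and since $\eta_1$ kills $G_2$ one computes $\eta_1(\exp V_i')=\sum_{j\leq m-m_2}a_jc_{ij}\pmod{\bZ}$, where $V_i'=\sum_j c_{ij}V_j$ with the $c_{ij}$ of height $\leq R$. Thus each $\eta_1(\exp V_i')$ is a rational of denominator $\leq R^{O(1)}$, so there is an integer $q_0\leq R^{O(1)}$ with $\eta_2:=q_0\eta_1$ vanishing on $\Gamma_2$; as it also kills $[G,G]$, it is a horizontal character of $G/\Gamma_2$. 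It is non-trivial because $\eta_1$, being a non-trivial character of the horizontal torus of $(G,\Gamma_1,G_\bullet)$, is already surjective onto $\bR/\bZ$, hence so is $\eta_2$. For the modulus, passing to the abelian quotient $\gog/\gog_2$ the change from $\cV_1$- to $\cV_2$-coordinates is the $R$-rational linear map with matrix $(c_{ij})_{i,j\leq m-m_2}$, so the ($\bZ$-valued, by \eqref{EqHorCharForm} applied to $\Gamma_2$) coefficient vector of $\eta_2$ in $\cV_2$-coordinates is the image of $q_0a$ under this map, giving $|\eta_2|\leq R^{O(1)}$ with respect to $\cV_2$. Finally $\|\eta_2\circ f\|_{C^\infty([N])}=\|q_0(\eta_1\circ f)\|_{C^\infty([N])}\leq q_0\|\eta_1\circ f\|_{C^\infty([N])}\leq R^{O(1)}$, using $\|q_0\alpha\|_{\bR/\bZ}\leq q_0\|\alpha\|_{\bR/\bZ}$ coefficientwise in Definition~\ref{DefCNorm}.

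To conclude I would apply Lemma~\ref{LemEtaTotalED} to the data $(G,\Gamma_2,\cV_2)$ with the rationality parameter replaced by a suitable $R^{O(1)}\geq 10$: the non-trivial character $\eta_2$ satisfies $|\eta_2|\leq R^{O(1)}$, $\|\eta_2\circ f\|_{C^\infty([N])}\leq R^{O(1)}$ and $N\gg R^{O(1)}$, so $\{f(n)\Gamma_2\}_{n\in[N]}$ is not totally $R^{-O(1)}$-equidistributed in $G/\Gamma_2$, which is the claim. The only genuine obstacle I anticipate is the bookkeeping in the middle step: verifying cleanly that $R$-rationality of $\cV_2$ over $\cV_1$ forces both $\eta_1(\Gamma_2)\subseteq\tfrac1{R^{O(1)}}\bZ$ and the $R^{O(1)}$ bound on the $\cV_2$-modulus of $\eta_2$. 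These follow from the explicit Mal'cev coordinate-change formulas of \cite{GT12a}*{Appendix A} together with Lemma~\ref{LemRational}, but one must track how rationality heights compound through the Baker--Campbell--Hausdorff corrections; everything else is a direct invocation of lemmas already proved.
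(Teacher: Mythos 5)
Your proof is correct and follows essentially the same route as the paper's: extract a character obstruction on $G/\Gamma_1$ via Corollary~\ref{CorLeibman}, multiply it by a bounded integer so that it annihilates $\Gamma_2$ while keeping control of the modulus and the $C^\infty([N])$-norm, and conclude with Lemma~\ref{LemEtaTotalED}. The only (harmless) difference is how the integer multiplier is produced: you evaluate $\eta_1$ on the Mal'cev generators $\exp(V_i')$ of $\Gamma_2$ and clear denominators, whereas the paper invokes Lemma~\ref{LemRational} to get $D\leq R^{O(1)}$ with $\gamma^D\in\Gamma_1$ for all $\gamma\in\Gamma_2$, so that $D\eta(\gamma)=\eta(\gamma^D)=0$.
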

\begin{proof}By Corollary \ref{CorLeibman}, there is a non-trivial horizontal character $\eta$ of $G/\Gamma_1$, i.e. a character $G\to\bR/\bZ$ that annihilates $\Gamma_1$, of size $|\eta|_{\cV_1}\leq R^{O(1)}$ that satisfies $\|\eta\circ f\|_{C^\infty([N])}\leq R^{O(1)}$. Here the modulus $|\eta|_{\cV_1}\leq R^{O(1)}$ is measured in terms of the basis $\cV_1$. Because all elements of $\cV_2$ are $R$-rational combinations of those in $\cV_1$, by Lemma \ref{LemRational}, there is a positive integer $D\leq R^{O(1)}$ such that for all $\gamma\in \Gamma_2$,  $\gamma^D\in\Gamma_1$ and thus $D\eta(\gamma)=\eta(\gamma^D)=0$. Then $D\eta$ is a horizontal character of both $G/\Gamma_1$ and $G/\Gamma_2$ with $|D\eta|_{\cV_1}\leq R^{O(1)}$. Again, because all elements of $\cV_2$ are $R$-rational combinations of those in $\cV_1$, $|D\eta|_{\cV_2}\leq R^{O(1)}$.  After replacing $\eta$ with $D\eta$, one may assert that:

There exists a non-trivial horizontal character $\eta$ of $G/\Gamma_2$ such that $|\eta|_{\cV_2}\leq R^{O(1)}$ and $\|\eta\circ f\|_{C^\infty([N])}\leq R^{O(1)}$. By Lemma \ref{LemEtaTotalED}, $\{f(n)\Gamma_2\}_{n\in[N]}$ fails to be totally $R^{-O(1)}$-equidistributed.\end{proof}

The following is the refined statement  that we will need later, which deals with generic restrictions of a 2-parameter polynomial to one variable.

\begin{proposition}\label{PropLeibman}Under Hypothesis \ref{HypMain}, for $\tR\geq R$ and $N, H\in\bN$ such that $N,H\gg \tR^{O(1)}$, at least one of the following holds:\begin{enumerate}
\item\label{PropLeibman1} either $\{g(n,h)\Gamma\}_{h\in[H]}$ is totally $\tR^{-1}$-equidistributed in $G/\Gamma$ for all but $\tR^{-1}N$ values of $n\in[N]$;
\item\label{PropLeibman2} or there exists a horizontal character $\eta$ of $G/\Gamma$ of modulus $|\eta|\leq \tR^{O(1)}$ such that $\|\eta(\omega_{j,k})\|_{\bR/\bZ}\leq \tR^{O(1)}N^{-j}H^{-k}$ for all $j,k\geq 0$.
\end{enumerate}
\end{proposition}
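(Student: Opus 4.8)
The plan is to argue by contraposition: assuming \eqref{PropLeibman1} fails, I will produce the horizontal character of \eqref{PropLeibman2}. If \eqref{PropLeibman1} fails, there is a set $B\subseteq[N]$ with $\#B>\tR^{-1}N$ such that for every $n\in B$ the one-variable polynomial $f_n\colon h\mapsto g(n,h)$ — which lies in $\Poly(\bZ,G_\bullet)$, since fixing one coordinate of a polynomial map preserves the defining property in Definition \ref{DefPoly} — gives rise to a sequence $\{f_n(h)\Gamma\}_{h\in[H]}$ that is not totally $\tR^{-1}$-equidistributed in $G/\Gamma$. Since $H\gg\tR^{O(1)}$, applying Corollary \ref{CorLeibman} to each such $f_n$ yields a non-trivial horizontal character $\eta_n$ with $|\eta_n|\le\tR^{O(1)}$ and $\|\eta_n\circ f_n\|_{C^\infty([H])}\le\tR^{O(1)}$.

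The step I expect to be the crux is to make a single character serve a positive proportion of the exceptional fibers simultaneously. There are only $O(\tR^{O(1)})$ horizontal characters of modulus at most $\tR^{O(1)}$ — they are the integer vectors lying in a box of that size — so pigeonholing gives one fixed non-trivial $\eta$ with $\eta_n=\eta$ for all $n$ in a subset $\cN\subseteq B$ of size $\#\cN\gg\tR^{-O(1)}N$, with $\|\eta\circ f_n\|_{C^\infty([H])}\le\tR^{O(1)}$ for every $n\in\cN$. It is this uniformity in $n$ — not the routine per-fiber application of Green--Tao's theorem — that is the real content of the proposition, and it is what makes \eqref{PropLeibman2} strong enough to be used later in comparing the sub-nilmanifolds $Y_n$ across $n$.

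It remains to convert this into coefficient estimates with the stated $N^{-j}H^{-k}$ scaling. By linearity of $\eta$ and formula \eqref{EqPolyCoeff}, $\eta\circ g(n,h)=\sum_{j+k\le d}\eta(\omega_{jk})\binom nj\binom hk$; since $\omega_{jk}\in\gog_{j+k}\subseteq\gog_2$ whenever $j+k\ge2$ and a horizontal character for $G_\bullet$ annihilates $\gog_2$, the only possibly non-zero terms are those with $j+k\le1$, and $\eta\circ g$ is affine in $(n,h)$. For each $k$, reading off the coefficient of $\binom hk$ shows that the polynomial $n\mapsto\sum_j\eta(\omega_{jk})\binom nj$ lies within $\tR^{O(1)}H^{-k}$ of $\bZ$ for all $n$ in the set $\cN$, which has density $\gg\tR^{-O(1)}$ in $[N]$; Lemma \ref{LemCNormBound} (applicable once $N$ and $H$ exceed suitable fixed powers of $\tR$) then supplies an integer $D_k\le\tR^{O(1)}$ with $N^{j}\|D_k\eta(\omega_{jk})\|_{\bR/\bZ}\ll\tR^{O(1)}H^{-k}$ for all $j$. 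Taking $D=\operatorname{lcm}(D_0,\dots,D_d)\le\tR^{O(1)}$ and replacing $\eta$ by the still non-trivial horizontal character $D\eta$, whose modulus is still $\le\tR^{O(1)}$, gives $\|\eta(\omega_{jk})\|_{\bR/\bZ}\le\tR^{O(1)}N^{-j}H^{-k}$ for all $j,k\ge0$ — the cases $j+k\ge2$ being automatic from the vanishing noted above — which is exactly \eqref{PropLeibman2}.
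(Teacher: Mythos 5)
Your argument is correct and follows essentially the same route as the paper's own proof: contraposition, a per-slice application of Corollary \ref{CorLeibman} to $h\mapsto g(n,h)$, a pigeonhole over the $\tR^{O(1)}$ admissible horizontal characters to obtain a single $\eta$ working for $\gg\tR^{-O(1)}N$ values of $n$, and then Lemma \ref{LemCNormBound} applied in the $n$-variable to each $\binom{h}{k}$-coefficient, finishing by replacing $\eta$ with $D\eta$. Your additional observation that $\eta(\omega_{jk})=0$ whenever $j+k\geq 2$ (so that $\eta\circ g$ is affine and those cases of \eqref{PropLeibman2} are automatic) is a correct minor simplification not spelled out in the paper, but it does not change the substance of the argument.
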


\begin{proof}Assuming \eqref{PropLeibman1} fails, we try to establish  \eqref{PropLeibman2}. For more than $\tR^{-1}N$ values of $n\in[N]$,  $\{g(n,h)\Gamma\}_{h\in[H]}$ is not totally $\tR^{-1}$-equidistributed. For every such $n$, by Corollary \ref{CorLeibman} there is a horizontal character $\eta$ with $|\eta|\leq\tR^{O(1)}$ such that \begin{equation}\label{EqPropLeibman1}\|\eta\circ g(n,\cdot)\|_{C^\infty([H])}\ll \tR^{O(1)}.\end{equation}

Applying pigeonhole principle to the at least $\tR^{-1}N$ values of $n\in[N]$, there is a common $\eta$ with $0<|\eta|<\tR^{O(1)}$, such that \eqref{EqPropLeibman1} holds for at least  $\tR^{-O(1)}N$ choices of $n\in[N]$. By \eqref{EqPolyCoeff}, this implies:
$$\Bigg\|\sum_{\substack{j,k\geq 0\\j+k\leq d}}\binom{n}{j}\binom{\cdot}{k}\eta(\omega_{jk})\Bigg\|_{C^\infty([H])}\ll \tR^{O(1)},$$  which by Definition \ref{DefCNorm} means that
\begin{equation}\label{EqPropLeibman1}\Bigg\|\sum_{j=0}^{d-k}\binom{n}{j}\eta(\omega_{jk})\Bigg\|_{\bR/\bZ}\ll \tR^{O(1)}H^{-k},\ \forall k=0,\cdots,d.\end{equation}

As this inequality holds for $\tR^{-O(1)}N$ choices of $n\in[N]$, by Lemma \ref{LemCNormBound} there is a positive integer $D>0$ such that
$$\Bigg\|D\sum_{j=0}^{d-k}\binom{\cdot}{j}\eta(\omega_{jk})\Bigg\|_{C^\infty([N])}\ll \tR^{O(1)}H^{-k}\cdot \tR^{O(1)}=\tR^{O(1)}H^{-k},\ \forall k=0,\cdots,d.$$ In other words,
\begin{equation}\label{EqPropLeibman2}\|D\eta(\omega_{jk})\|_{\bR/\bZ}\ll \tR^{O(1)}H^{-k}N^{-j}, \forall k,j\geq 0\text{ such that }k+j\leq d.\end{equation}
This is exactly the desired conclusion after replacing $\eta$ with $D\eta$.
\end{proof}

\begin{lemma}\label{LemLeibmanDecomp}If Case \ref{PropLeibman}.\eqref{PropLeibman2} holds in Proposition \ref{PropLeibman}, then there is a decomposition $g=\epsilon g'\gamma$ with $\epsilon,g',\gamma\in\Poly(\bZ^2,G)$ such that:
\begin{enumerate}
\item\label{LemLeibmanDecomp1} $\epsilon$ is $(\tR^{O(1)},(N,H))$-smooth;
\item\label{LemLeibmanDecomp2} $\eta\circ g'=0$ while regarding $\eta:G/\Gamma\to\bR/\bZ$ as a morphism from $G$ to $\bR$;
\item\label{LemLeibmanDecomp3} $\gamma(n,h)$ is $\tR^{O(1)}$-rational for all $n,h\in\bZ$. \end{enumerate}\end{lemma}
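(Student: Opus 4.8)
The plan is to mimic the standard Green--Tao factorization argument (see \cite{GT12a}*{\S 9}), adapting it from one-parameter to two-parameter polynomials, with the horizontal character $\eta$ and the coefficient bounds from Case \ref{PropLeibman}.\eqref{PropLeibman2} as the given data. First I would pass to the horizontal torus: the character $\eta:G\to\bR/\bZ$ factors through $G/[G,G]\Gamma$, and by \eqref{EqHorCharForm} it is given by an integer vector $a$ of size $|a|=|\eta|\leq\tR^{O(1)}$ supported on the first $m-m_2$ coordinates. Applying $\eta$ to the coefficient expansion \eqref{EqPolyCoeff} of $g$, the scalar polynomial $\eta\circ g(n,h)=\sum_{j+k\leq d}\eta(\omega_{jk})\binom nj\binom hk$ has all its coefficients $\tR^{O(1)}N^{-j}H^{-k}$-close to $\frac1{\bZ}$-multiples; more precisely, by Lemma \ref{LemCoeffBound} (applied in each variable, or its two-variable analogue) there is an integer $D=O_d(1)$ such that $D\eta(\omega_{jk})$ is within $\tR^{O(1)}N^{-j}H^{-k}$ of a rational of height $\leq\tR^{O(1)}$.

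The core of the argument is then a coordinate-by-coordinate construction going up the tower $G/\Gamma=G/G_{d+1}\Gamma\to\cdots\to G/G_1\Gamma$, exactly as in \cite{GT12a}*{Lemma 9.2}. At each stage one splits each Mal'cev coordinate $\psi_\cV(g(n,h))_i$ into a ``smooth part'' (keeping the low-degree, small coefficients that vary slowly, i.e.\ the fractional parts of the approximants together with the genuinely slowly-varying terms), a ``rational part'' (the rational approximants to the coefficients, which upon exponentiating produce elements whose $D$-th power lies in $\Gamma$), and a part that is absorbed into $g'$ and satisfies $\eta\circ g'=0$. One has to check that this decomposition respects the group structure: because $\Poly(\bZ^2,G_\bullet)$ is a group (Lazard/Leibman/Green--Tao), products and the rearrangements $g=\epsilon g'\gamma$ stay polynomial, and one uses the remark after Definition \ref{DefSmooth} that a product of $(W,\bfN)$-smooth maps is $(W^{O(1)},\bfN)$-smooth (with $W\geq R$) to control $\epsilon$, and Lemma \ref{LemRational} together with the last sentence of that lemma to see that products of $\tR^{O(1)}$-rational elements are again $\tR^{O(1)}$-rational, giving \eqref{LemLeibmanDecomp3}. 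The smoothness estimate \eqref{LemLeibmanDecomp1} comes from the bounds $\|D\eta(\omega_{jk})\|_{\bR/\bZ}\ll\tR^{O(1)}N^{-j}H^{-k}$: a monomial $\binom nj\binom hk$ with such a coefficient has $C^0$-norm $\ll\tR^{O(1)}$ on $[N]\times[H]$ and discrete derivative in $n$ (resp.\ $h$) of size $\ll\tR^{O(1)}N^{-1}$ (resp.\ $\ll\tR^{O(1)}H^{-1}$), which is precisely Definition \ref{DefSmooth}, and the higher coordinates (on which $\eta$ does not directly give information) are handled inductively because the filtration condition $(\omega_{\bfj})_i=0$ for $i\leq m-m_{|\bfj|}$ forces those coordinates to be polynomial of controlled degree and one subsumes them into $g'$ or $\gamma$ via the tower.

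The property \eqref{LemLeibmanDecomp2}, $\eta\circ g'=0$, is arranged by construction: $g'$ is defined so that its first $m-m_2$ coordinates are exactly the horizontal coordinates of $g$ with the slowly-varying smooth piece and the rational piece subtracted off, i.e.\ $\eta\circ g' = \eta\circ g - \eta\circ\epsilon - \eta\circ\gamma$, and the splitting of $\eta\circ g$ into a smooth $\bR/\bZ$-valued polynomial plus a $\tfrac1D\bZ$-valued polynomial is what makes $\eta\circ\epsilon$ absorb the former and $\eta\circ\gamma$ the latter, leaving $\eta\circ g'$ identically zero. The main obstacle I expect is purely bookkeeping: doing the tower induction in \emph{two} parameters $(n,h)$ simultaneously, keeping the $C^\infty([N])$-type bounds in both variables aligned (the $N^{-j}H^{-k}$ factorized shape must be preserved through each rearrangement), and making sure that the non-horizontal coordinates, about which Proposition \ref{PropLeibman}.\eqref{PropLeibman2} says nothing, are legitimately shoved into $g'$ without spoiling the smoothness of $\epsilon$ or the rationality of $\gamma$ at the next level. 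This is exactly the two-variable analogue of the (already delicate) one-variable factorization in \cite{GT12a}, so I would structure the proof to invoke that machinery directly wherever possible and only spell out the modifications forced by the extra parameter.
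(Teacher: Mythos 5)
Your proposal is essentially the paper's proof, which (as the paper itself notes) is just the two-parameter version of \cite{GT12a}*{Lemma 9.2}: approximate each coefficient $\omega_{jk}$ by $u_{jk}$ with $\eta(u_{jk})\in\bZ$ and $|\omega_{jk}-u_{jk}|\ll\tR^{O(1)}N^{-j}H^{-k}$, pick rational $v_{jk}\in(\frac1D\bZ)^m$ with $\eta(v_{jk})=\eta(u_{jk})$, and set $\psi_\cV(\epsilon)=\sum(\omega_{jk}-u_{jk})\binom nj\binom hk$, $\psi_\cV(\gamma)=\sum v_{jk}\binom nj\binom hk$, $g'=\epsilon^{-1}g\gamma^{-1}$, exactly as you describe. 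One minor correction: there is no tower induction in this lemma (that iteration belongs to Theorem \ref{ThmFactorization}) and no appeal to Lemma \ref{LemCoeffBound} is needed, since Case \ref{PropLeibman}.\eqref{PropLeibman2} already places $\eta(\omega_{jk})$ within $\tR^{O(1)}N^{-j}H^{-k}$ of an integer; the whole proof is a single explicit coefficient-level construction.
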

\begin{proof}The proof is the same as that of \cite{GT12a}*{Lemma 9.2} except that we are not reducing to the case $g(0)=\id$. For completeness, we give a sketch.

For all integer pairs $j,k\geq 0$ with $j+k\leq d$, choose $u_{jk}\in\bR^m$ such that $\eta(u_{jk})\in\bZ$ and $|\omega_{jk}-u_{jk}|\ll \tR^{O(1)}N^{-j}H^{-k}$, and $v_{jk}\in\bQ^m$ such that $\eta(u_{jk})=\eta(v_{jk})$, where $
\eta$ is regarded as an $\bR$-valued linear functional from $\bR^m\cong\gog$. This can be done while requiring that $(u_{jk})_i=(v_{jk})_i=0$ for all $i\leq m-m_{j+k}$. Furthermore, one can require that $v_{j,k}$ is from $(\frac1D\bZ)^m$  for some integer $1\leq D\leq \tR^{O(1)}$.

Then define $\epsilon$, $g'$ and $\gamma$ by
$$\psi_\cV(\epsilon(n,h))=\sum_{\substack{j,k\geq 0\\j+k\leq d}}(\omega_{jk}-u_{jk})\binom{n}{j}\binom{h}{k},\ \psi_\cV(\gamma(n,h))=\sum_{\substack{j,k\geq 0\\j+k\leq d}}v_{jk}\binom{n}{j}\binom{h}{k},$$ and $g'(n,h)=\epsilon(n,h)^{-1}g(n,h)\gamma(n,h)^{-1}$. Then by Lemma \ref{LemPolyCoeff}, $\epsilon$, $\gamma$ belong to $\Poly(\bZ^2,G_\bullet)$ and hence so does $g'$ as $\Poly(\bZ^2,G_\bullet)$  is a group.

By the bound on $|\omega_{jk}-v_{jk}|$, we know that for all $(n,h)\in[N]\times[H]$, $$|\psi_\cV(\epsilon(n+1,h))-\psi_\cV(\epsilon(n,h))|\ll\sum_{\substack{j\geq 1,k\geq 0\\j+k\leq d}}\tR^{O(1)}N^{-j}H^{-k}\cdot n^{j-1}h^k\ll \tR^{O(1)}N^{-1}$$ and similarly
$|\psi_\cV(\epsilon(n,h+1))-\psi_\cV(\epsilon(n,h))|\ll \tR^{O(1)}H^{-1}$. Moreover, $|\psi_\cV(\epsilon(0,0))|=|\omega_{00}-v_{00}|\ll \tR^{O(1)}$. These inqualities guarantee property \eqref{LemLeibmanDecomp1} for $\epsilon$ by \cite{GT12a}*{Lemma A.5}.

Property \eqref{LemLeibmanDecomp2} holds as $$\begin{aligned}&\eta(g'(n,h))\\=&\eta(g(n,h))-\eta(\epsilon(n,h))-\eta(\gamma(n,h))\\
=&\sum_{\substack{j,k\geq 0\\j+k\leq d}}\eta(\omega_{jk})\binom{n}{j}\binom{h}{k}-\sum_{\substack{j,k\geq 0\\j+k\leq d}}\eta(\omega_{jk}-u_{jk})\binom{n}{j}\binom{h}{k}-\sum_{\substack{j,k\geq 0\\j+k\leq d}}\eta(v_{jk})\binom{n}{j}\binom{h}{k}\\
=&0.\end{aligned}$$ 

Finally, it follows from Lemma \ref{LemRational} that $\gamma$ is $\tR^{(O(1)}$-rational. This also implies by \cite{GT12a}*{Lemma A.12} (or rather the natural multiparameter extension of it) that for some positive integer $q\ll (\tR^{O(1)})^{O(1)}\ll \tR^{O(1)}$, $\gamma(n,h)\Gamma$ is $q\bZ^2$-periodic. Thus we have property \eqref{LemLeibmanDecomp3}.\end{proof}

Using this, Green-Tao's factorization theorem \cite{GT12a}*{Theorems 1.19 \& 10.2} can be easily refined to the following:

\begin{theorem}\label{ThmFactorization}Under Hypothesis \ref{HypMain}, for $B\geq 1$, $N, H\in\bN$ such that $N,H\gg R^{O(1)}$,  there exists an integer $W\in[R,R^{O(B^m)}]$, a $W$-rational subgroup $G'\subseteq G$, a $W$-rational Mal'cev basis $\cV'$ adapted to $(G'_\bullet,G'\cap\Gamma)$ consisting of $W$-rational combinations of vector in $\cV$, and a decomposition $g=\epsilon g'\gamma$ with $\epsilon,g',\gamma\in\Poly(\bZ^2,G_\bullet)$ such that:
\begin{enumerate}
\item\label{ThmFactorization1} $\epsilon$ is $(W,(N,H))$-smooth.
\item\label{ThmFactorization2} $g'$ takes value in $G'$. And, with respect to the metric induced by $\cV'$ on $G'/\Gamma'$, $\{g'(n,h)\}_{h\in[H]}$ is totally $W^{-B}$-equidistributed for all but at most $W^{-B}N$ values of $n\in[N]$;
\item\label{ThmFactorization3} $\gamma(n,h)$ is $W$-rational for all $n,h\in\bZ$. Moreover for some $1\leq q\leq W$, $\{\gamma(n,h)\Gamma\}_{(n,h)\in\bZ^2}$ is $q\bZ^2$-periodic.
\end{enumerate}
\end{theorem}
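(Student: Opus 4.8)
The plan is to emulate the proof of Green--Tao's factorization theorem \cite{GT12a}*{\S 10} line for line, with Proposition \ref{PropLeibman} playing the role of \cite{GT12a}*{Theorem 2.9} and Lemma \ref{LemLeibmanDecomp} the role of \cite{GT12a}*{Lemma 9.2}; since both of these already incorporate the two-parameter setting and the exceptional set of $n$'s, nothing essentially new happens and the proof is a formal iteration. It runs as an induction on $\dim G$, which terminates once the horizontal torus of the current group relative to the current filtration is trivial — in particular once the dimension reaches $0$ — because then there are no non-trivial horizontal characters, so Case \ref{PropLeibman}.\eqref{PropLeibman2} below is vacuous and one is forced into Case \ref{PropLeibman}.\eqref{PropLeibman1}.

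For the inductive step, fix a constant $K_0=K_0(m,d)$, large enough for the bookkeeping described below, and apply Proposition \ref{PropLeibman} to $g$ with parameter $\tR=R^{K_0 B}$; this is legitimate provided $N,H$ exceed a suitable power of $R$. If Case \ref{PropLeibman}.\eqref{PropLeibman1} holds, one is finished with $W:=R$ and $\epsilon=\gamma=\id$: being totally $R^{-K_0 B}$-equidistributed off $R^{-K_0 B}N$ values of $n$ is, since $K_0\ge1$, stronger than the required ``totally $W^{-B}$-equidistributed off $W^{-B}N$ values''. If instead Case \ref{PropLeibman}.\eqref{PropLeibman2} holds, it supplies a non-trivial horizontal character $\eta$ with $|\eta|\le\tR^{O(1)}$ and $\|\eta(\omega_{j,k})\|_{\bR/\bZ}\le\tR^{O(1)}N^{-j}H^{-k}$ for all $j,k$, and Lemma \ref{LemLeibmanDecomp} then produces $g=\epsilon_1 g_1\gamma_1$ with $\epsilon_1$ being $(\tR^{O(1)},(N,H))$-smooth, $\gamma_1(n,h)$ being $\tR^{O(1)}$-rational for all $n,h$, and $\eta\circ g_1\equiv0$ viewed as a homomorphism $G\to\bR$. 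Since $\eta$ annihilates $[G,G]\supseteq G_2$, its kernel $G^{(1)}:=\ker(\eta\colon G\to\bR)$ is a codimension-$1$ rational normal subgroup of complexity $\tR^{O(1)}$, carrying the degree-$d$ filtration $G^{(1)}_1=G^{(1)}$, $G^{(1)}_i=G_i$ $(i\ge2)$; by \cite{GT12a}*{\S A} it has a $\tR^{O(1)}$-rational Mal'cev basis $\cV^{(1)}$ adapted to $(G^{(1)}_\bullet,G^{(1)}\cap\Gamma)$ consisting of $\tR^{O(1)}$-rational combinations of $\cV$, and $g_1\in\Poly(\bZ^2,G^{(1)}_\bullet)$ because $g_1\in\Poly(\bZ^2,G_\bullet)$ takes values in $G^{(1)}$.

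One then applies the theorem inductively to $g_1$ on $G^{(1)}/(G^{(1)}\cap\Gamma)$ — of dimension $m-1$ and rationality $\tR^{O(1)}=R^{O(B)}$ — \emph{but with the exponent $B$ replaced by $K_0 B$}. This returns $W'$, a $W'$-rational subgroup $G'\subseteq G^{(1)}$, a $W'$-rational Mal'cev basis $\cV'$ adapted to $(G'_\bullet,G'\cap\Gamma)$ built from $W'$-rational combinations of $\cV^{(1)}$, and $g_1=\epsilon_2 g'\gamma_2$ with $\epsilon_2$ being $(W',(N,H))$-smooth, $g'$ totally $(W')^{-K_0 B}$-equidistributed in $G'/(G'\cap\Gamma)$ off at most $(W')^{-K_0 B}N$ values of $n$, and $\gamma_2$ being $W'$-rational with $\{\gamma_2(n,h)\,(G'\cap\Gamma)\}$ periodic in $(n,h)$. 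Substituting, $g=(\epsilon_1\epsilon_2)\,g'\,(\gamma_2\gamma_1)$, so one puts $\epsilon:=\epsilon_1\epsilon_2$, $\gamma:=\gamma_2\gamma_1$ and $W:=(W')^{K_0}$. Using the composition rule for $(W,(N,H))$-smooth sequences stated after Definition \ref{DefSmooth}, the composition rule for $R$-rational elements in Lemma \ref{LemRational}, transitivity of rationality, and the multiparameter form of \cite{GT12a}*{Lemma A.12}, one checks (this is where $K_0$ must be large enough) that $\epsilon$ is $(W,(N,H))$-smooth, that $\gamma(n,h)$ is $W$-rational with $\{\gamma(n,h)\Gamma\}$ being $q\bZ^2$-periodic for some $q\le W$, that $G'$ is $W$-rational and $\cV'$ consists of $W$-rational combinations of $\cV$, and that $W^{-B}=(W')^{-K_0 B}$ — so the equidistribution of $g'$ and the size of its exceptional set of $n$'s are exactly as demanded.

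It remains to bound $W$. The recursion has depth at most $m$, since the dimension strictly drops at each passage to $G^{(1)}$, and at each level the rationality grows only by a power $O(B)$ — this comes from $|\eta|\le\tR^{O(1)}=R_{\mathrm{cur}}^{O(B)}$ and hence from the complexity of $\ker\eta$. Composing at most $m$ such steps gives $W\le R^{O(B^m)}$ with an implied constant $O_{m,d}(1)$ (the exponent is $(O(B))^m=O_{m,d}(1)\cdot B^m$, not $B^{O(m)}$), while $W\ge R$ is automatic. The single point that needs care — exactly as in \cite{GT12a} — is the choice of $K_0$: the dichotomy has to be applied at the level $\approx R^{-K_0 B}$ pinned to the \emph{final} rationality $W$, which is only determined once the recursion stops, and reassembling $\epsilon$ and $\gamma$ from their pieces costs a bounded power everywhere; taking $K_0$ a sufficiently large constant and declaring $W$ to be the corresponding large power of the accumulated rationality keeps all of these losses consistent with the targets $W^{-B}$-equidistribution and exceptional set $\le W^{-B}N$. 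Everything else is the bare iteration of Proposition \ref{PropLeibman} and Lemma \ref{LemLeibmanDecomp}, plus the standard fact that a rational normal subgroup inherits a compatible rational Mal'cev basis of controlled complexity.
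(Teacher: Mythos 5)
Your proof is correct and follows essentially the same route as the paper's: both iterate the dichotomy of Proposition \ref{PropLeibman} together with Lemma \ref{LemLeibmanDecomp}, descending through a chain of rational normal subgroups of strictly decreasing dimension (so at most $m$ steps) and assembling $\epsilon$ and $\gamma$ multiplicatively via the group property of $\Poly(\bZ^2,G_\bullet)$, Lemma \ref{LemRational} and the multiparameter form of \cite{GT12a}*{Lemma A.12}. The only difference is organizational: you run it as a recursion on $\dim G$ with the exponent amplified to $K_0B$ in each call and $W=(W')^{K_0}$, whereas the paper runs a forward iteration applying the dichotomy at level $R_{k-1}^{-B}$ pinned to the current rationality and sets $W=R_k$ at termination — your bookkeeping in fact absorbs the ``replace $W$ by $W^{O(1)}$'' losses a bit more explicitly.
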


\begin{proof}We start with the squence $g(n)$ apply Proposition \ref{PropLeibman} with $\tR=R^B$. If Case  \ref{PropLeibman}.\eqref{PropLeibman1} holds, then the theorem is true for $G'=G$, $W=R$, $\epsilon(n,h)=\gamma(n,h)=\id$ and $g'=g$.

 If  Case  \ref{PropLeibman}.\eqref{PropLeibman2} holds for a non-trivial horizontal character $\eta_1$ of $G/\Gamma$ of norm $\ll \tR^{O(1)}$ and Lemma \ref{LemLeibmanDecomp} applies, yielding a decomposition $g=\epsilon_1g'_1\gamma_1$. In this case, let $G'_1=\ker_G\eta_1$ and $\Gamma'_1=G'_1\cap\Gamma$. Then $(G'_1)_\bullet=\{(G'_1)_i\}_{i\geq 0}=\{G'_1\cap G_i\}_{i\geq 0}$ is a filtration of $G'_1$. Notice that each $(G'_1)_i$ is a $\tR^{O(1)}$-rational subgroup. For $R_1=\tR^{O(1)}=R^{O(B)}$, by \cite{GT12a}*{Lemma A.10} $G_1$ has an $R_1$-rational Mal'cev basis $\cV_1$ adapted to $((G_1)_\bullet,\Gamma'_1)$ consisting of $R_1$-rational combinations of vector in $\cV$.

We then again to apply Proposition \ref{PropLeibman} with $\tR=R_1^B$, and apply Lemma \ref{LemLeibmanDecomp} if necessary, to the sequence $\{g'_1(n)\Gamma'_1\}$ in $G_1/\Gamma'_1$. The argument is iterated if Case  \ref{PropLeibman}.\eqref{PropLeibman2} holds in every step. So in the $k$-th step, we will apply Proposition \ref{PropLeibman} with $\tR=R_{k-1}^B$, and obtain, with $R_k=\big(R_{k-1}^B\big)^{O(1)}=(R_{k-1})^{O(B)}$:\begin{itemize}
\item a non-trivial horizontal charcter $\eta_k$ of $G'_{k-1}/\Gamma'_{k-1}$ of norm $\ll R_k$;
\item an $R_k$-rational Mal'cev basis $\cV_k$ adapted to $((G'_k)_\bullet,\Gamma'_k)$ consisting of $R_k$-rational combinations of vector in $\cV_{k-1}$, where $G'_k=\ker_{G_{k-1}}\eta_k$ and $(G'_k)_i=G'_k\cap G_i$;
\item a decomposition $g'_{k-1}=\epsilon_kg'_k\gamma_k$ in the group $\Poly(\bZ^2,(G_{k-1})_\bullet)$,
\end{itemize}
such that:\begin{itemize}
\item $\epsilon$ is $(R_k,(N,H))$-smooth with respect to the metric induced by $\cV_{k-1}$ on $G'_{k-1}$;
\item $g'_k$ takes value in $G'_k$, and thus $g'_k\in\Poly(\bZ^2,(G'_k)_\bullet)$;
\item $\gamma'_k$ is $R_k$-rational with respect to the Mal'cev basis $\cV_{k-1}$.
\end{itemize}

As $\dim G'_k$ strictly decreases, the process must stop at some $k\leq m$. This means Case \ref{PropLeibman}.\eqref{PropLeibman1} holds, i.e. $\{g'_k(n,h)\Gamma_k\}_{h\in[H]}$ is totally $R_k^{-B}$-equidistributed in $G'_k/\Gamma'_k$ for all but $R_k^{-B}N$ values of $n\in[N]$.

Write $g=\epsilon g'\gamma$ where $\epsilon=\epsilon_1\cdots\epsilon_k$, $g'=g'_k$ and $\gamma=\gamma_k\cdots\gamma_1$, $G'=G'_k$, $\cV'=\cV_k$ and $W=R_k$. Notice that since for each $j$, $\epsilon_j\in\Poly(\bZ^2,(G'_j)_\bullet)\subseteq \Poly(\bZ^2,G_\bullet)$ and $\Poly(\bZ^2,G_\bullet)$ is a group, $\epsilon\subseteq\Poly(\bZ^2,G_\bullet)$. Similarly $\gamma$ is in $\Poly(\bZ^2,G_\bullet)$ and so is $g'$.

It was shown above that the property \eqref{ThmFactorization2} in the theorem holds for $g'$. The properties \eqref{ThmFactorization1}  and  the $W$-rationality in \eqref{ThmFactorization3} follow in the same way as in the proof of \cite{GT12a}*{Theorem 10.2}, after replacing $W$ with $W^{O(1)}$ if necessary. Furthermore, by a multiparameter version of \cite{GT12a}*{Lemma A.12}, the 2-parameter sequence $\{\gamma'(n,h)\Gamma\}_{(n,h)\in\bZ^2}$ is $q\bZ^2$-periodic for some $q\ll W^{O(1)}$. Once again by replacing $W$ with $W^{O(1)}$, we obtain the property \eqref{ThmFactorization3} for $\gamma$.

Finally, remark that as $k\leq m$, $R_k\ll R^{O(B^m)}$ and $W\ll R_k^{O(1)}\ll R^{O(B^m)}$.\end{proof}

\section{Separation of major and minor arcs}\label{SecBilinear}

From now on, we work under Hypothesis \ref{HypMain}.

\begin{notation}\label{NotationBilinear}Suppose $\ref{CONSTImplicit}=O(1)$ is sufficiently large, and $\lBONST{BONSTTotalED}\geq 10\ref{CONSTImplicit}$. Let $N$, $H$, and $g$ be  as in Theorem \ref{ThmFactorization}, applied with $B=\ref{BONSTTotalED}$.  Also let $\epsilon$, $g'$, $\gamma$, $W$, $q$, $G'$ and $\cV'$ be as in the conclusion of the theorem. Without loss of generality, we may assume $R\geq 10$. In addition, after replacing the period $q$ with a multiple of it if necessary, we may assume $q\in (\frac W2, W]$. \end{notation}

Because $W\in [R, R^{O(\ref{BONSTTotalED}^m)}]$, we will fix a constant $\lCONST{CONSTWRange}=O_{m,d}(1)\geq 1$ and assume
\begin{equation}W\in [R, R^{\ref{CONSTWRange}\ref{BONSTTotalED}^m}].\end{equation}

Let $F: G/\Gamma\to\bC$ be a function with $\|F\|\leq 1$. For every $n>0$, choose $\theta_n$ from the unit circle such that
\begin{equation}\label{EqAbsValueArg}\big|\sum_{h\leq H}\beta(n+h)F(g(n,h)\Gamma)\big|=\theta_n\sum_{h\leq H}\beta(n+h)F(g(n,h)\Gamma).\end{equation}

Split $(0,H]$ into $W^2$ subintervals $I_1,\cdots, I_k$ of equal lengths $W^{-2}H$. Then for each $n$, the arithmetic progression $[H]$ is decomposed as the disjoint union
$$[H]=\bigsqcup_{\bfj\in\cJ}\cI_{n,\bfj}$$
 of arithmetic progressions $$\cI_{n,\bfj}=\{h\in I_k\cap\bN: n+h\equiv j (\mod q)\},$$ where \begin{equation}\cJ=\{(k,j): 1\leq k\leq W^2, 0\leq j\leq q-1\}.\end{equation}

 Remark that
 \begin{equation}\label{EqEnsembleSize}\#\cJ=W^2q\in(\frac12W^3,W^3].\end{equation} Thus the length of the arithmetic progression $\cI_{n,\bfj}$ satisfies
\begin{equation}\label{EqALength}\#\cI_{n,\bfj}\in [W^{-3}H,2W^{-3}H)\end{equation}

Because $\epsilon$ is $(W,(N,H))$-smooth,  $\di_G(\epsilon(n,h), \id_G)\leq W$ for all $(n,h)\in[N]\times[H]$. Moreover, for any given $1\leq k\leq W^2$, $\di_G(\epsilon(n,h),\epsilon(n,h'))\leq\frac{W}H\cdot W^{-2}H\leq W^{-1}$ for all $h, h'\in I_{n,k}$.

For a given pair $(n,\bfj)=(n,k,j)$, Choose $\epsilon_{n,\bfj}=\epsilon(n,h)$ for the smallest $h\in \cI_{n,\bfj}$. As $\cI_{n,\bfj}\subseteq  I_{n,k}$, we know \begin{equation}\label{EqepsilonSize}d_G(\epsilon_{n,\bfj},\epsilon(n,h))\leq W^{-1}, \forall h\in  \cI_{n,\bfj}.\end{equation} Then \begin{equation}\label{EqepsilonNorm}\di_G(\epsilon_{n,\bfj},\id_G)\ll W.\end{equation}

Choose a rational element $\gamma_{n,\bfj}$ from  such that $\gamma_{n,\bfj}\Gamma=\gamma(n,h)\Gamma$ for any $h\in \cI_{n,\bfj}$. The value of  $\gamma_{n,\bfj}$ can in fact be chosen to be independent of the choice of $h\in\cI_{n,\bfj}$ and $q$-periodic in $n$, because $\cI_{n,\bfj}\subset q\bZ+j$ and $\gamma(n,h)$ is $q$-periodic in both $n$ and $h$. As $\gamma(n,h)$ is $W$-rational, and $\gamma_{n,\bfj}=\gamma(n,h)\xi$  for some $\xi\in\Gamma$, $\gamma_{n,\bfj}$ is $W^{O(1)}$-rational by Lemma \ref{LemRational}. Moreover, we may choose $\gamma_{n,\bfj}$ from the fundamental domain $\psi_\cV^{-1}([0,1)^m)$. In particular, by \cite{GT12a}*{Lemma A.4}, \begin{equation}\label{EqgammaNorm}\di_G(\gamma_{n,\bfj},\id_G)\ll R^{O(1)}.\end{equation}

Define $G_{n,\bfj}$ by $G_{n,\bfj}=\gamma_{n,\bfj}^{-1}G'\gamma_{n,\bfj}$ and $\Gamma_{n,\bfj}=G_{n,\bfj}\cap\Gamma$.

\begin{lemma}\label{LemPeriodicLattice}The following properties are true:
\begin{enumerate}
\item $G_{n,\bfj}$ is a $W^{O(1)}$-rational subgroup and $\Gamma_{n,\bfj}$ is a lattice of it;
\item The assignments $G_{n,\bfj}$ and $\Gamma_{n,\bfj}$ are $q$-periodic in $n$;
\item $G_{n,\bfj}$ has a $W^{O(1)}$ -rational Mal'cev basis $\cV_{n,\bfj}$ adapted to $((G_{n,\bfj})_\bullet, \Gamma_{n,\bfj})$ that consists of $W^{O(1)}$-rational combinations of elements from $\cV$. Here $(G_{n,\bfj})_\bullet$ consists of the subgroups $(G_{n,\bfj})_i=G_{n,\bfj}\cap G_i$.
\end{enumerate}\end{lemma}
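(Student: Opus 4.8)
The plan is to transport the known structure of $(G', \cV')$ through conjugation by $\gamma_{n,\bfj}$, checking at each step that the rationality constants only inflate by a bounded power of $W$, and that $q$-periodicity in $n$ is inherited from the $q$-periodicity of the assignment $n \mapsto \gamma_{n,\bfj}$ established just before the lemma. First I would record the input data: by Theorem \ref{ThmFactorization}, $G'$ is $W$-rational with a $W$-rational Mal'cev basis $\cV'$ adapted to $(G'_\bullet, G'\cap\Gamma)$, where each element of $\cV'$ is a $W$-rational combination of elements of $\cV$; moreover $\gamma_{n,\bfj}$ is $W^{O(1)}$-rational by Lemma \ref{LemRational}, so by Lemma \ref{LemRational} again $\psi_\cV(\gamma_{n,\bfj}) \in \frac{1}{q'}\bZ^m$ for some $q' \ll W^{O(1)}$, and hence conjugation by $\gamma_{n,\bfj}$ (and by its inverse, which is also $W^{O(1)}$-rational by the last sentence of Lemma \ref{LemRational}) acts on $\gog$ by a matrix with $W^{O(1)}$-rational entries in the basis $\cV$, using that the Lie bracket structure constants in $\cV$ have height $\leq R_0 \leq W$.

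For part (1): the Lie algebra $\gog_{n,\bfj} = \Ad(\gamma_{n,\bfj}^{-1})\gog'$ has a basis obtained by applying the $W^{O(1)}$-rational matrix $\Ad(\gamma_{n,\bfj}^{-1})$ to the basis of $\gog'$ coming from $\cV'$; composing with the expression of $\cV'$-vectors as $W$-rational combinations of $\cV$-vectors, this basis consists of $W^{O(1)}$-rational combinations of elements of $\cV$, so $G_{n,\bfj}$ is $W^{O(1)}$-rational with respect to $\cV$. That $\Gamma_{n,\bfj} = G_{n,\bfj}\cap\Gamma$ is a lattice of $G_{n,\bfj}$ follows from the general criterion quoted in Section \ref{SecNil}: a connected Lie subgroup $H \subseteq G$ has $H\cap\Gamma$ a lattice iff $H$ is defined over $\bQ$, and $G_{n,\bfj}$, being $W^{O(1)}$-rational, is rational hence defined over $\bQ$. (Alternatively, $\gamma_{n,\bfj}^{-1}(G'\cap\Gamma)\gamma_{n,\bfj}$ is a lattice in $G_{n,\bfj}$ commensurable with $\Gamma_{n,\bfj}$ since both $\gamma_{n,\bfj}$ and conjugation preserve rationality; but the rationality argument is cleanest.)

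For part (2): by construction in Notation \ref{NotationBilinear}, $\gamma_{n,\bfj}$ was chosen to be $q$-periodic in $n$ (because $\gamma(n,h)$ is $q\bZ^2$-periodic modulo $\Gamma$ and $\cI_{n,\bfj} \subset q\bZ + j$, and we fixed a $q$-periodic-in-$n$ choice of representative). Since $G_{n,\bfj}$ and $\Gamma_{n,\bfj}$ are defined purely in terms of $\gamma_{n,\bfj}$, $G'$ and $\Gamma$, they are $q$-periodic in $n$ as well. For part (3): apply \cite{GT12a}*{Lemma A.10} to the $W^{O(1)}$-rational filtration $(G_{n,\bfj})_\bullet$ (each $(G_{n,\bfj})_i = G_{n,\bfj}\cap G_i = \gamma_{n,\bfj}^{-1}(G'\cap G_i)\gamma_{n,\bfj}$ being $W^{O(1)}$-rational, again by transporting the rationality of $(G')_i = G'\cap G_i$ through the $W^{O(1)}$-rational conjugation), to produce a $W^{O(1)}$-rational Mal'cev basis $\cV_{n,\bfj}$ adapted to $((G_{n,\bfj})_\bullet, \Gamma_{n,\bfj})$ whose vectors are $W^{O(1)}$-rational combinations of vectors in $\cV$; one can for definiteness take $\cV_{n,\bfj} = \Ad(\gamma_{n,\bfj}^{-1})\cV'$ suitably put into Mal'cev form, which inherits $q$-periodicity in $n$ from $\gamma_{n,\bfj}$.

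The only real point requiring care — the main "obstacle," though it is routine — is the bookkeeping of rationality constants under conjugation: one must verify that $\Ad(\gamma_{n,\bfj}^{\pm 1})$ is given by a matrix over $\bQ$ of height $W^{O(1)}$ in the basis $\cV$, which uses the Baker–Campbell–Hausdorff formula together with the bounds $\di_G(\gamma_{n,\bfj},\id_G) \ll R^{O(1)}$ from \eqref{EqgammaNorm} and $\psi_\cV(\gamma_{n,\bfj}) \in \frac1{q'}\bZ^m$ with $q' \ll W^{O(1)}$ from Lemma \ref{LemRational}, so that all denominators appearing are bounded. Everything else is a direct application of the cited lemmas from \cite{GT12a} and the rationality formalism set up in Section \ref{SecNil}.
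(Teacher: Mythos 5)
Your proposal is correct and follows essentially the same route as the paper: the paper simply cites \cite{GT12a}*{Lemma A.13} for the fact that conjugating the $W$-rational subgroup $G'$ by the $W^{O(1)}$-rational element $\gamma_{n,\bfj}$ yields a $W^{O(1)}$-rational subgroup (you unpack this via the rationality of $\Ad(\gamma_{n,\bfj}^{\pm1})$, which is the content of that lemma), derives $q$-periodicity from that of $\gamma_{n,\bfj}$, and invokes \cite{GT12a}*{Proposition A.10} for the adapted Mal'cev basis, exactly as you do.
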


\begin{proof}

Because $\gamma_{n,\bfj}$ is $W^{O(1)}$-rational and $G'$ is a $W$-rational subgroup, by \cite{GT12a}*{Lemma A.13}, $G_{n,\bfj}$ is a $W^{O(1)}$-rational subgroup. As $\gamma_{n,\bfj}$ is $q$-periodic in $n$, so are the correspondences from $(n,\bfj)$ to $G_{n,\bfj}$ and $\Gamma_{n,\bfj}$. The last property is given by \cite{GT12a}*{Proposition A.10}.
\end{proof}

Define $g_{n,\bfj}(h)=\gamma_{n,\bfj}^{-1}g'(n,h)\gamma_{n,\bfj}\in G_{n,\bfj}$. Then $g_{n,\bfj}\in\Poly(\bZ,(G_{n,\bfj})_\bullet)$ and \begin{equation}\label{EqMnfdDecomp}\begin{aligned}g(n,h)\Gamma=&\epsilon(n,h)g'(n,h)\gamma(n,h)\Gamma=\epsilon(n,h)g'(n,h)\gamma_{n,\bfj}\Gamma\\
=&\epsilon(n,h)\gamma_{n,\bfj}g_{n,\bfj}(h)\Gamma,\ \forall h\in\cI_{n,\bfj}.\end{aligned}\end{equation}

We then define a new function $F_{n,\bfj}: G_{n,\bfj}/\Gamma_{n,\bfj}\to\bC$ by \begin{equation}F_{n,\bfj}(g\Gamma_{n,\bfj})=\theta_nF(\epsilon_{n,\bfj}\gamma_{n,\bfj}g\Gamma).\end{equation}
Note that $F_{n,\bfj}$ is well-defined because if $g=\hg\eta$ with $\eta\in\Gamma_{n,\bfj}\subset\Gamma$, then $g\Gamma=\hg\Gamma$.

 By  \eqref{EqepsilonNorm}, \eqref{EqgammaNorm} and \cite{GT12a}*{Lemma A.5} and
 \begin{equation}\label{EqSubseqLip}\|F_{n,\bfj}\|_{G_{n,\bfj}/\Gamma_{n,\bfj}}\leq (WR^{O(1)})^{O(1)}\|F\|_{G/\Gamma}\leq W^{O(1)}.\end{equation}

 \begin{lemma}\label{LemGenericED}Suppose $\ref{CONSTImplicit}=O(1)$ is sufficiently large and $\ref{BONSTTotalED}\geq 10\ref{CONSTImplicit}$. There exists a subset $\cN\subseteq[N]$ such that \begin{equation}\label{EqcN}\#\cN\geq (1-W^{-\ref{BONSTTotalED}})N\end{equation} and for all $(n,\bfj)\in\cN\times\cJ$, the sequence $\{g_{n,\bfj}(h)\Gamma_{n,\bfj}\}_{h\in[H]}$ is totally $W^{-\ref{CONSTImplicit}^{-1}\ref{BONSTTotalED}}$-equidistributed in $G_{n,\bfj}/\Gamma_{n,\bfj}$.\end{lemma}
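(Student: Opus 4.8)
The plan is to take for $\cN$ the set of those $n\in[N]$ for which $\{g'(n,h)\Gamma'\}_{h\in[H]}$, with $\Gamma'=G'\cap\Gamma$, is totally $W^{-\ref{BONSTTotalED}}$-equidistributed in $G'/\Gamma'$ with respect to $\cV'$. Then Theorem~\ref{ThmFactorization}, part~\eqref{ThmFactorization2} (applied with $B=\ref{BONSTTotalED}$) says precisely that $\#([N]\setminus\cN)\leq W^{-\ref{BONSTTotalED}}N$, which is \eqref{EqcN}. The real content is to transfer, for each fixed $(n,\bfj)\in\cN\times\cJ$, the equidistribution of $\{g'(n,h)\}$ in $G'/\Gamma'$ to equidistribution of $\{g_{n,\bfj}(h)=\gamma_{n,\bfj}^{-1}g'(n,h)\gamma_{n,\bfj}\}$ in $G_{n,\bfj}/\Gamma_{n,\bfj}$, at the cost of only a bounded power in the rate.

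First I would introduce the auxiliary lattice $\Lambda=\gamma_{n,\bfj}^{-1}\Gamma'\gamma_{n,\bfj}$ of $G_{n,\bfj}$. Conjugation by $\gamma_{n,\bfj}$ is a Lie group isomorphism $\iota:G'\to G_{n,\bfj}$ that carries $G'_\bullet=\{G'\cap G_i\}$ onto $(G_{n,\bfj})_\bullet=\{G_{n,\bfj}\cap G_i\}$ (each $G_i$ being normal in $G$), carries $\Gamma'$ onto $\Lambda$, and sends $g'(n,h)$ to $g_{n,\bfj}(h)$. Pushing $\cV'$ forward by $\Ad(\gamma_{n,\bfj}^{-1})$ yields a Mal'cev basis $\cW$ adapted to $((G_{n,\bfj})_\bullet,\Lambda)$ with the same structure constants as $\cV'$, and $\iota$ becomes an isometry of nilmanifolds $G'/\Gamma'\to G_{n,\bfj}/\Lambda$ for the metrics induced by $\cV'$ and $\cW$. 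Since $\cV'$ consists of $W$-rational combinations of $\cV$ and $\gamma_{n,\bfj}$ is $W^{O(1)}$-rational (so that $\Ad(\gamma_{n,\bfj}^{-1})$ has $W^{O(1)}$-rational matrix in $\cV$-coordinates), $\cW$ consists of $W^{O(1)}$-rational combinations of $\cV$, and $\Lambda$ is a $W^{O(1)}$-rational lattice. Hence, for $n\in\cN$, applying $\iota$ shows that $\{g_{n,\bfj}(h)\Lambda\}_{h\in[H]}$ is totally $W^{-\ref{BONSTTotalED}}$-equidistributed in $G_{n,\bfj}/\Lambda$ with respect to $\cW$.

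Next I would compare $\Lambda$ with $\Gamma_{n,\bfj}=G_{n,\bfj}\cap\Gamma$ using Corollary~\ref{CorChangeLattice}, in contrapositive form, applied inside $G_{n,\bfj}$ to the polynomial $g_{n,\bfj}\in\Poly(\bZ,(G_{n,\bfj})_\bullet)$. Both $\cV_{n,\bfj}$ (supplied by Lemma~\ref{LemPeriodicLattice}) and $\cW$ are $W^{O(1)}$-rational combinations of $\cV$, so $\cW$ consists of $W^{O(1)}$-rational combinations of $\cV_{n,\bfj}$; also $N,H\gg W^{O(1)}$, since $W\leq R^{\ref{CONSTWRange}\ref{BONSTTotalED}^m}=R^{O(1)}$ whereas $N,H\gg R^{O(1)}$. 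Thus, taking the parameter in Corollary~\ref{CorChangeLattice} to be $W^{\ref{CONSTImplicit}^{-1}\ref{BONSTTotalED}}$ --- a sufficiently large power of $W$ because $\ref{BONSTTotalED}\geq 10\ref{CONSTImplicit}$ --- we learn that if $\{g_{n,\bfj}(h)\Gamma_{n,\bfj}\}_{h\in[H]}$ were \emph{not} totally $W^{-\ref{CONSTImplicit}^{-1}\ref{BONSTTotalED}}$-equidistributed in $G_{n,\bfj}/\Gamma_{n,\bfj}$ with respect to $\cV_{n,\bfj}$, then $\{g_{n,\bfj}(h)\Lambda\}_{h\in[H]}$ would not be totally $W^{-c\,\ref{CONSTImplicit}^{-1}\ref{BONSTTotalED}}$-equidistributed in $G_{n,\bfj}/\Lambda$ with respect to $\cW$, for some $c=O(1)$. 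Choosing $\ref{CONSTImplicit}$ large enough that $c\leq\ref{CONSTImplicit}$, this contradicts the conclusion of the previous paragraph, so $\{g_{n,\bfj}(h)\Gamma_{n,\bfj}\}_{h\in[H]}$ is indeed totally $W^{-\ref{CONSTImplicit}^{-1}\ref{BONSTTotalED}}$-equidistributed.

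I expect the main obstacle to be the rationality bookkeeping around the change-of-lattice step --- verifying that $\Lambda=\gamma_{n,\bfj}^{-1}\Gamma'\gamma_{n,\bfj}$ is honestly a $W^{O(1)}$-rational lattice in $G_{n,\bfj}$ carrying a Mal'cev basis $\cW$ that is $W^{O(1)}$-rationally related to $\cV_{n,\bfj}$, so that Corollary~\ref{CorChangeLattice} genuinely applies, and then tracking the various $O(1)$ exponents so that all of them fit inside the fixed large constant $\ref{CONSTImplicit}$ while $\ref{BONSTTotalED}\geq 10\ref{CONSTImplicit}$ still leaves room to spare. The conjugation/isometry step and the appeal to Corollary~\ref{CorChangeLattice} are otherwise routine.
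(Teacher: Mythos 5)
Your proposal is correct and follows essentially the same route as the paper: both proofs transport the equidistribution statement between $G'/\Gamma'$ and $G_{n,\bfj}/\Gamma_{n,\bfj}$ via the conjugation isomorphism $\Ad_{\gamma_{n,\bfj}}$ (with its pushed-forward Mal'cev basis and the $W^{O(1)}$-rationality of $\gamma_{n,\bfj}$) and then invoke Corollary~\ref{CorChangeLattice} to change the lattice, absorbing all $O(1)$ exponents into $\ref{CONSTImplicit}$. The only cosmetic difference is the direction of transport --- you conjugate $\Gamma'$ into $G_{n,\bfj}$ and compare it with $\Gamma_{n,\bfj}$ there, while the paper conjugates $\Gamma_{n,\bfj}$ into $G'$ and compares it with $\Gamma'$, arguing entirely in contrapositive form.
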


\begin{proof}By property \eqref{ThmFactorization2} in Theorem \ref{ThmFactorization}, it suffices to show that if $\{g_{n,\bfj}(h)\Gamma_{n,\bfj}\}_{h\in[H]}$ is not  totally $W^{-\ref{CONSTImplicit}^{-1}\ref{BONSTTotalED}}$-equidistributed, then $\{g'(n,h)\Gamma'\}_{h\in[H]}$ is not totally $W^{-\ref{BONSTTotalED}}$-equidistributed in $G'/\Gamma'$.

Consider the lattice $\Gamma'_{n,\bfj}=\gamma_{n,\bfj}\Gamma_{n,\bfj}\gamma_{n,\bfj}$ in $G'$. Then $G'/\Gamma'_{n,\bfj}$ is isomorphic to $G_{n,\bfj}/\Gamma_{n,\bfj}$ via the conjugacy $\Ad_{\gamma_{n,\bfj}}$ by $\gamma_{n,\bfj}$. Let $\cV'_{n,\bfj}$ be the image of $\cV_{n,\bfj}$ under $\Ad_{\gamma_{n,\bfj}}$, then it is a Mal'cev basis adapted to $(G'_\bullet,\Gamma'_{n,\bfj})$. Because of the bound \eqref{EqgammaNorm} and \cite{GT12a}*{Lemma A.5}, $\Ad_{\gamma_{n,\bfj}}$ is $R^{O(1)}$-Lipschitz continuous. As $W\geq R$ and $g'(n,h)=\Ad_{\gamma_{n,\bfj}}g_{n,\bfj}(h)$, the sequence $\{g'(n,h)\Gamma'_{n,\bfj}\}_{h\in[H]}$ fails to be totally $W^{-\ref{CONSTImplicit}^{-1}\ref{BONSTTotalED}-O(1)}$-equidistributed in $G_{n,\bfj}/\Gamma'_{n,\bfj}$, with respect to the metric induced by $\cV'_{n,\bfj}$.

Moreover, because $\gamma_{n,\bfj}$ is $W$-rational and satisfies the bound \eqref{EqgammaNorm}, it is a rational element of height bounded by $W^{O(1)}$. Since $\cV_{n,\bfj}$ consists of $W^{O(1)}$-rational combinations of elements of $\cV$, by \cite{GT12a}*{Lemma A.11}, so does $\cV'_{n,\bfj}$. We also know that $\cV'$ consists of $W$-rational combinations of elements from $\cV$. Because they are both Mal'cev basis of $G'$, it follows that $\cV'$ consists of $W^{O(1)}$-rational combinations of elements from $\cV'_{n,\bfj}$. Hence by Corollary \ref{CorChangeLattice},
  the sequence $\{g'(n,h)\Gamma'\}_{h\in[H]}$ fails to be totally $W^{-O(\ref{CONSTImplicit}^{-1}\ref{BONSTTotalED}+O(1))}$-equidistributed in $G_{n,\bfj}/\Gamma'$, with respect to the metric induced by $\cV'$. As it will be assumed that $\ref{BONSTTotalED}\geq 10\ref{CONSTImplicit}$, the lemma follows after updating the value of the constant $\ref{CONSTImplicit}=O(1)$.
\end{proof}

By \eqref{EqMnfdDecomp}, \eqref{EqSubseqLip} and \eqref{EqepsilonSize}, for all $h\in\cI_{n,\bfj}$,
\begin{equation}\di_{G/\Gamma}(\epsilon_{n,\bfj}\gamma_{n,\bfj}g_{n,\bfj}(h)\Gamma,g(n,h)\Gamma)\leq W^{-1},\end{equation} and \begin{equation}\label{EqSmoothApprox0}|F_{n,\bfj}(g_{n,\bfj}(h)\Gamma_{n,\bfj})-\theta_n F(g(n,h)\Gamma)|\leq W^{-1}\|F\|.\end{equation}

\begin{lemma}\label{LemSmoothApprox} For all Lipschitz function $F$ on $G/\Gamma$, the sum
\begin{equation}\label{EqSmoothApprox1}\sum_{n\leq N}\Big|\sum_{h\leq H} \beta(n+h)F(g(n,h)\Gamma)\Big|\end{equation}
 is approximated by
\begin{equation}\label{EqSmoothApprox2}\sum_{n\leq N}\sum_{\bfj\in\cJ}\sum_{h\in\cI_{n,\bfj}}
\beta(n+h)F_{n,\bfj}(g_{n,\bfj}(h)\Gamma_{n,\bfj}) ,\end{equation} up to an error bounded by $W^{-1}HN$.
\end{lemma}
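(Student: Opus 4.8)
The plan is to show that replacing each summand $\beta(n+h)F(g(n,h)\Gamma)$ in \eqref{EqSmoothApprox1} by $\beta(n+h)F_{n,\bfj}(g_{n,\bfj}(h)\Gamma_{n,\bfj})$ (where $\bfj=\bfj(n,h)$ is the unique index with $h\in\cI_{n,\bfj}$) costs at most $W^{-1}$ per term, and then summing the errors. First I would fix $n$ and use the decomposition $[H]=\bigsqcup_{\bfj\in\cJ}\cI_{n,\bfj}$ to write $\sum_{h\leq H}\beta(n+h)F(g(n,h)\Gamma)=\sum_{\bfj\in\cJ}\sum_{h\in\cI_{n,\bfj}}\beta(n+h)F(g(n,h)\Gamma)$. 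Recalling the choice of $\theta_n$ in \eqref{EqAbsValueArg}, the inner absolute value in \eqref{EqSmoothApprox1} equals $\theta_n\sum_{\bfj\in\cJ}\sum_{h\in\cI_{n,\bfj}}\beta(n+h)F(g(n,h)\Gamma)=\sum_{\bfj\in\cJ}\sum_{h\in\cI_{n,\bfj}}\beta(n+h)\,\theta_nF(g(n,h)\Gamma)$.

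Next I would invoke \eqref{EqSmoothApprox0}, which says precisely that $|F_{n,\bfj}(g_{n,\bfj}(h)\Gamma_{n,\bfj})-\theta_nF(g(n,h)\Gamma)|\leq W^{-1}\|F\|\leq W^{-1}$ for every $h\in\cI_{n,\bfj}$ (using $\|F\|\leq 1$). Therefore, for each fixed $n$,
\begin{equation}\label{EqPlanStep}\Big|\,\Big|\sum_{h\leq H}\beta(n+h)F(g(n,h)\Gamma)\Big|-\sum_{\bfj\in\cJ}\sum_{h\in\cI_{n,\bfj}}\beta(n+h)F_{n,\bfj}(g_{n,\bfj}(h)\Gamma_{n,\bfj})\Big|\leq\sum_{\bfj\in\cJ}\sum_{h\in\cI_{n,\bfj}}|\beta(n+h)|\cdot W^{-1}\leq HW^{-1},\end{equation}
where the last inequality uses $|\beta(n+h)|\leq 1$ and $\sum_{\bfj\in\cJ}\#\cI_{n,\bfj}=H$. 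Summing \eqref{EqPlanStep} over $n\leq N$ and applying the triangle inequality gives that \eqref{EqSmoothApprox1} and \eqref{EqSmoothApprox2} differ by at most $W^{-1}HN$, which is the claim.

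I do not anticipate a real obstacle here: the lemma is essentially bookkeeping that records the consequence of the pointwise estimate \eqref{EqSmoothApprox0} established just above, together with the partition $[H]=\bigsqcup_{\bfj}\cI_{n,\bfj}$ and the defining property of $\theta_n$. The only point requiring a modicum of care is keeping straight that $\bfj$ depends on $h$ (and on $n$), so that each $h\in[H]$ is counted exactly once; this is guaranteed by the disjointness of the progressions $\cI_{n,\bfj}$. One should also note that $\beta$ here stands for the multiplicative function ($\mu$, $\lambda$, or a non-pretentious $\beta$) and the argument uses nothing about it beyond $|\beta|\leq 1$.
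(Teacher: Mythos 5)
Your proof is correct and follows exactly the same route as the paper's (one-line) argument: partition $[H]$ into the progressions $\cI_{n,\bfj}$, use the defining property \eqref{EqAbsValueArg} of $\theta_n$ to remove the absolute value, and apply the pointwise bound \eqref{EqSmoothApprox0} together with $|\beta|\leq 1$ termwise. You have merely written out the bookkeeping the paper leaves implicit.
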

\begin{proof}As $[H]=\bigsqcup_{\bfj\in\cJ}\cI_{n,\bfj}$, the claim follows from \eqref{EqAbsValueArg} and \eqref{EqSmoothApprox0}.\end{proof}

For each triple $(n,\bfj)$, decompose $F_{n,\bfj}$ as $\tilde F_{n,\bfj}+ E_{n,\bfj}$ where $E_{n,\bfj}=\int_{G_{n,\bfj}/\Gamma_{n,\bfj}}F_{n,\bfj}$ is a constant and $\tF_{n,\bfj}$ has zero average on $G_{n,\bfj}/\Gamma_{n,\bfj}$. Then \eqref{EqSmoothApprox2} splits into the sum of a major arc part
\begin{equation}\label{EqMajor}\sum_{n\leq N} \sum_{\bfj\in\cJ}\sum_{h\in\cI_{n,\bfj}}E_{n,\bfj}\beta(n+h) .\end{equation}
and a minor art part
\begin{equation}\label{EqMinor}\sum_{n\leq N} \sum_{\bfj\in\cJ}\sum_{h\in\cI_{n,\bfj}}\beta(n+h)\tF_{n,\bfj}(g_{n,\bfj}(h)\Gamma_{n,\bfj}) ,\end{equation}
Note that, \begin{equation}\label{EqMajorTermBd}|E_{n,\bfj}|\leq 1,\end{equation}
\begin{equation}\label{EqMinorTermBd}\|\tF_{n,\bfj}\|_{G_{n,\bfj}/\Gamma_{n,\bfj}}\leq2\|F_{n,\bfj}\|_{G_{n,\bfj}/
\Gamma_{n,\bfj}}\ll W^{O(1)}.\end{equation}
\begin{equation}\label{EqMinorTermC0Bd}\|\tF_{n,\bfj}\|_{C^0(G_{n,\bfj}/\Gamma_{n,\bfj})}\leq 2.\end{equation}

\section{Major arc estimate}\label{SecMajor}

The major arc estimate will concern only multiplicative functions $\beta$ that are non-pretentious as defined by Granville and Soundararajan \cite{GS07}. Given two $1$-bounded multiplicative functions $\beta, \beta'$ and a parameter
$X\ge 1$, a distance $\bD(\beta, \beta';X) \in  [0,+\infty)$ is defined by the formula
$$\bD(\beta,\beta';X) := \left( \sum_{p\le X} \frac{1-\re(\beta(p)\overline{\beta'(p)})}{p} \right)^{1/2}.
$$
It is known that this gives a (pseudo-)metric on $1$-bounded multiplicative functions; see \cite[Lemma 3.1]{GS07}. Moreover, let
\begin{equation}\label{DefM}M(\beta; X) := \inf_{|t|\le X} \bD(\beta, n\mapsto n^{it}; X)^2\end{equation}
and
\begin{equation}\begin{aligned}\label{DefM2}
M(\beta;X,Y):&= \inf_{q\le Y; \chi \, (q)} M(\beta\overline{\chi};X)\\
&=\inf_{|t|\le X; q\le Y; \chi\, (q)} \bD(\beta, n\mapsto \chi(n) n^{it};X)^2,
\end{aligned}\end{equation}
where $\chi$ ranges over all Dirichlet characters of modulus $q\le Y$.

In addition, we also define \begin{equation}\label{EqDefM1}\tM(\beta,X,Y)=\displaystyle\inf_{X'\geq X}M(\beta,X',Y).\end{equation} Remark that $\tM$ is increasing in $X$ and decreasing in $Y$.

 Instead of \eqref{EqMajor}, we will first estimate
\begin{equation}\label{EqMajorDense}\sum_{n\leq N}\sum_{\bfj\in\cJ}\sum_{h\in\cI_{n,\bfj}}E_{n,\bfj}1_\cS\beta(n+h) .\end{equation}
In this part, we will prove

\begin{proposition}\label{PropMajor}
Assuming Hypothesis \ref{HypMain}, Notation \ref{NotationBilinear} and the following inequalities:
\begin{equation}\label{EqPropMajorCond} \frac{\log\log H}{\log H}<\epsilon<\frac 1{500};\ 10\leq R_0\leq R\leq H^{\frac\epsilon{\ref{CONSTWRange}\ref{BONSTTotalED}^m}}; \log H<(\log N)^{\frac12}.\end{equation}

Then for all $1$-bounded multiplicative function $\beta:\bN\to\bC$ and function $F:G/\Gamma\to\bC$ with $\|F\|\leq 1$, there exists a subset $\cS\subseteq[0,N]\cap\bN$ with $N-\#\cS\ll\epsilon N$, such that
\begin{equation}\label{EqPropMajor}\begin{aligned}&\Big|\sum_{n\leq N}\sum_{\bfj\in\cJ}\sum_{h\in\cI_{n,\bfj}}E_{n,\bfj}1_\cS\beta(n+h) \Big|\\
\ll & \Big(W^{-\frac14}+ W^2e^{-\frac12\tM(\beta,\frac N{W^5}, W )} \tM(\beta,\frac N{W^5}, W )^{\frac12}+ W^2(\log\frac N{W^5})^{-\frac1{100}}\Big)HN.\end{aligned}\end{equation}

Moreover, the choice of $\cS$ only depends on $H$, $N$, and $\epsilon$.
 \end{proposition}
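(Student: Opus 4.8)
The plan is to switch to the variable $m=n+h$, peel off the unit scalars $\theta_n$, turn the cut-off weights $E_{n,\bfj}$ into additive characters, and then feed the result into the Matom\"aki--Radziwi\l\l--Tao short-interval estimate. Write $E_{n,\bfj}=\theta_n E^\flat_{n,\bfj}$ with $E^\flat_{n,\bfj}:=\int_{G_{n,\bfj}/\Gamma_{n,\bfj}}F(\epsilon_{n,\bfj}\gamma_{n,\bfj}\cdot)$, a $1$-bounded scalar. Since $\cI_{n,(k,j)}=\{h\in I_k:\ n+h\equiv j\bmod q\}$, for fixed $n$ the sum over $\bfj=(k,j)\in\cJ$ and $h\in\cI_{n,\bfj}$ regroups, for each $k$, into a sum over $m=n+h$ running through the interval $n+I_k$; hence the quantity in \eqref{EqMajorDense} equals $\sum_{n\le N}\theta_n\sum_{k=1}^{W^2}\sum_{m\in n+I_k}c_{n,k}(m)\,1_\cS(m)\beta(m)$, where $c_{n,k}(m):=E^\flat_{n,(k,\,m\bmod q)}$ depends on $m$ only through $m\bmod q$ and satisfies $|c_{n,k}(m)|\le1$. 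Discarding the $\theta_n$ by the triangle inequality, \eqref{EqMajorDense} is bounded in modulus by $\sum_{n\le N}\sum_{k=1}^{W^2}\bigl|\sum_{m\in n+I_k}c_{n,k}(m)1_\cS(m)\beta(m)\bigr|$, at the cost of no longer exploiting cancellation in $n$.

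Next I would remove the weights. Fourier expansion of $c_{n,k}$ on $\bZ/q\bZ$ gives $c_{n,k}(m)=\sum_{b\bmod q}\widehat{c_{n,k}}(b)e(bm/q)$ with $|\widehat{c_{n,k}}(b)|\le1$, so the previous bound is at most $\sum_{k=1}^{W^2}\sum_{b\bmod q}\sum_{n\le N}\bigl|\sum_{m\in n+I_k}e(bm/q)1_\cS(m)\beta(m)\bigr|$. Translating each inner interval so that it begins at $0$ — a harmless operation costing $O(WH^2)\ll W^{-\frac14}HN$ in total, since $W\le H^\epsilon$ and $\log H<(\log N)^{\frac12}$ — every inner term becomes $\sum_{n\le N}\bigl|\sum_{h'\le W^{-2}H}\tilde\beta_b(n+h')\bigr|$ with $\tilde\beta_b(m):=e(bm/q)1_\cS(m)\beta(m)$ a fixed sequence, which is exactly the object estimated in \cite{MR16} and \cite{MRT16}.

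To finish I would invoke the Matom\"aki--Radziwi\l\l--Tao estimate. Take $\cS$ to be the set of $n\in[N]$ having a prime factor in a range $[P_-,P_+]$, with $P_\pm$ depending on $H$ and $\epsilon$ only, chosen so that both $N-\#\cS\ll\epsilon N$ and $1_\cS\beta$ carries the ``typical factorisation'' used in \cite{MRT16} (this last point is what lets one suppress the $\tfrac{\log\log H}{\log H}$-type term in the short-interval variance bound). Expanding $e(b\cdot/q)$ with $b\bmod q\le W$ through Dirichlet characters of conductor dividing $q$ via Gauss sums, \cite{MRT16} gives $\sum_{n\le N}\bigl|\sum_{h'\le W^{-2}H}\tilde\beta_b(n+h')\bigr|\ll\bigl(e^{-\frac12 M(\beta;N,q)}M(\beta;N,q)^{\frac12}+(\log N)^{-\frac1{100}}\bigr)N\cdot W^{-2}H$. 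Summing over the $W^2$ intervals $I_k$ via $\sum_k|I_k|=H$, over the $\le q\le W$ frequencies $b$, then weakening $N$ to $N/W^{5}$ and $M(\beta;N,q)$ to $\tM(\beta,N/W^{5},W)$ — legitimate because $\tM$ is monotone and $q\le W$ — and adding the $W^{-\frac14}HN$ slack produces \eqref{EqPropMajor}. Since $\cS$ was built only from $H$, $N$, $\epsilon$, the final clause of the proposition holds.

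The main obstacle is the construction of $\cS$. It must be sparse, $N-\#\cS\ll\epsilon N$ (which is why one needs $\epsilon\gg\tfrac{\log\log H}{\log H}$ in \eqref{EqPropMajorCond}), and simultaneously structured so that the Matom\"aki--Radziwi\l\l--Tao argument applied to $1_\cS\beta$ loses no $\tfrac{\log\log H}{\log H}$ factor and so that $[N]\setminus\cS$ never has to be estimated by absolute values — a trivial estimate there would reintroduce exactly such a term. Intertwined with this is the bookkeeping of non-pretentiousness through the rational twist $e(b\cdot/q)$ with $q\le W$: this is precisely what forces the statement to be phrased with $\tM(\beta,\cdot,W)$, the distance to the family $n\mapsto\chi(n)n^{it}$ with $\mathrm{cond}(\chi)\le W$, rather than with the narrower $M(\beta,\cdot)$. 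The remaining steps — the regrouping, the Fourier expansion on $\bZ/q\bZ$, the translation and edge errors, and the accounting of the $W$-power losses — are routine.
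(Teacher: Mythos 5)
Your overall architecture is the right one and matches the paper's: regroup the triple sum over $(\bfj,h)$ into $q$-periodic weights on the intervals $n+I_k$, take the set $\cS$ to be the Matom\"aki--Radziwi\l{}\l{}--Tao set built from primes in prescribed ranges depending only on $H,N,\epsilon$, expand the periodic weight in characters, and feed the result into the short-interval variance estimate (\cite{MRT16}*{Thm A.2}) via Cauchy--Schwarz in $n$. The power-of-$W$ bookkeeping in your plan (a loss of $q\le W$ from the character sum and $W^2$ from the intervals $I_k$) also lands within the claimed bound \eqref{EqPropMajor}.

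However, there is a genuine gap at the key reduction step. After your Fourier expansion you must estimate $\sum_{n}\bigl|\sum_{m\in n+I_k}e(bm/q)\,1_\cS(m)\beta(m)\bigr|$, and you propose to convert $e(bm/q)$ into Dirichlet characters ``via Gauss sums.'' But the additive character $e(bm/q)$ is a linear combination of Dirichlet characters modulo $q$ only on the residues $m$ coprime to $q$. On a residue class with $e=\gcd(m,q)>1$ one writes $m=em'$ with $(m',q/e)=1$, and then one needs $\beta(em')=\beta(e)\beta(m')$ to obtain a multiplicative function of $m'$ twisted by a character mod $q/e$ --- which fails for merely multiplicative $\beta$ (e.g.\ $\mu(4)\neq\mu(2)^{2}$), and these residues form a positive proportion of each interval. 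This is exactly the difficulty the paper's proof is built to circumvent: it first writes $\beta=\hat\beta*\eta$ with $\hat\beta$ the \emph{completely} multiplicative function agreeing with $\beta$ at primes, discards the convolution divisors $a>W$ using \eqref{EqInversionBound} (Lemma \ref{LemMajorTail}), and for $a\le W$ decomposes the remaining variable by $u=\gcd(b,q)$ as in \eqref{EqLemDirichlet3}, so that the quantity reaching Theorem \ref{ThmMRT} is genuinely of the form $1_\cS\hat\beta(v)\chi(v)$ with $\chi$ a Dirichlet character of conductor dividing $q/u$ (and with $\cS$ rescaled to $\cS_{P_1,Q_1,(N+H)/(au)}$, using $au<P_1$ so that membership in $\cS$ passes to the quotient variable). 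Your proposal contains no substitute for this convolution device, so the step from the additive-character-twisted sum to something \cite{MRT16} can handle is missing. Two smaller points: your $\cS$ should be the multi-range set $\cS_{P_1,Q_1,N}$ of Definition \ref{DefDenseS} (a single prime range does not match the hypothesis of Theorem \ref{ThmMRT}), and the passage from the $L^1$ average over $n$ to the $L^2$ statement of Theorem \ref{ThmMRT} requires a Cauchy--Schwarz in $n$ together with a dyadic decomposition of $[1,N]$ as in Corollary \ref{CorMRT}; both are routine once the convolution step is in place.
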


This will result from the following more precise statement.

\begin{proposition}\label{PropMajorPR}
Assume the settings of Theorem \ref{ThmFactorization}, and inequalities \begin{equation}\label{EqPRcondition} 10\leq P_1<Q_1\leq\exp\big((\log N)^{\frac12}\big),\
(\log Q_1)^{480}<P_1; \end{equation}
\begin{equation}\label{EqMajorDensePara}W^{96}\leq P_1<Q_1\leq W^{-4}H.\end{equation}

Then there exists a subset $\cS\subseteq[0,N]\cap\bN$ with \begin{equation}\label{EqMajorSDense}N-\#\cS\ll\frac{\log P_1}{\log Q_1}N,\end{equation} such that for all $1$-bounded multiplicative function $\beta:\bN\to\bC$ and function $F:G/\Gamma\to\bC$ with $\|F\|\leq 1$,
\begin{equation}\label{EqMajorPR1}\begin{aligned}&\Big|\sum_{n\leq N}\sum_{\bfj\in\cJ}\sum_{h\in\cI_{n,\bfj}}E_{n,\bfj}1_\cS\beta(n+h) \Big|\\
\ll & \Big(W^{-\frac14}+ W^2e^{-\frac12\tM(\beta,\frac N{W^5}, W )} \tM(\beta,\frac N{W^5}, W )^{\frac12}+ W^2(\log\frac N{W^5})^{-\frac1{100}}\\
&\hskip1cm + \frac{(\log H)^{\frac16}}{P_1^{\frac1{48}}}\Big)HN.\end{aligned}\end{equation}

Moreover, the choice of $\cS$ only depends on $H$, $N$, $P_1$ and $Q_1$.
 \end{proposition}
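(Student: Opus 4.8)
The plan is to estimate the triple sum in \eqref{EqMajorPR1} by regrouping it, using the periodicity mod $q$ carried by the coefficients $E_{n,\bfj}$ and the progressions $\cI_{n,\bfj}$, into short-interval averages of the sieved multiplicative function $1_\cS\beta$ twisted by rational linear phases, and then feeding these into the Matom\"aki--Radziwi\l{}\l{}--Tao short-interval estimate \cite{MRT16} (in a form that retains the sieve parameters $P_1<Q_1$).

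First I would fix $\cS$ to be the set of $m\in[N]$ divisible by some prime $p\in(P_1,Q_1]$; by Mertens' theorem its complement has size $\ll\bigl(\prod_{P_1<p\le Q_1}(1-\tfrac1p)\bigr)N\ll\tfrac{\log P_1}{\log Q_1}N$, which is \eqref{EqMajorSDense}, and $\cS$ depends only on $N$, $P_1$, $Q_1$ (and on $H$ through the range), as required. Next, pulling the absolute value outside to discard the unimodular factors $\theta_n$ of \eqref{EqAbsValueArg} and using $|E_{n,\bfj}|\le1$, I would note that for each fixed $n$ and each of the $W^2$ subintervals $I_k$ of $(0,H]$ the inner sum over $\bfj=(k,j)$ regroups --- since $\cI_{n,(k,j)}=\{h\in I_k:\ n+h\equiv j\pmod q\}$ --- into $\sum_{m\in n+I_k}w_{n,k}(m)\,1_\cS\beta(m)$, where the weight $w_{n,k}$ is $1$-bounded and depends on $m$ only through $m\bmod q$. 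Expanding this $q$-periodic weight into the additive characters $m\mapsto e(am/q)$, $0\le a<q$ (Fourier coefficients of modulus $\le1$), reduces the estimate to bounding, uniformly in $k$ and in $a$,
$$\sum_{n\le N}\Big|\sum_{m\in n+I_k}1_\cS\beta(m)\,e(am/q)\Big|,$$
a short-interval average of $1_\cS\beta$ at window length $|I_k|=W^{-2}H$ and outer scale $\asymp N$. Here periodicity is exactly what brings us into the scope of \cite{MRT16}: for non-pretentious $\beta$ the twist $m\mapsto\beta(m)e(am/q)$ remains non-pretentious unless $\beta$ correlates with some $\chi(n)n^{it}$ of modulus $\le q\le W$, which is precisely the competitor class weighed by $\tM(\beta,\tfrac N{W^5},W)$, the replacement of the scale $N$ by $N/W^5$ being a harmless technical shaving.

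The decay of the displayed average is then supplied by \cite{MRT16}: the hypotheses \eqref{EqPRcondition}--\eqref{EqMajorDensePara}, and especially $W^{96}\le P_1<Q_1\le W^{-4}H$, $(\log Q_1)^{480}<P_1$ and $Q_1\le\exp\bigl((\log N)^{1/2}\bigr)$, are arranged precisely so that this parametrized estimate, at window $W^{-2}H$ and scale $\asymp N$, bounds the display by $W^{-2}HN$ times $e^{-\frac12\tM(\beta,\frac N{W^5},W)}\tM(\beta,\tfrac N{W^5},W)^{1/2}+(\log\tfrac N{W^5})^{-1/100}+(\log H)^{1/6}P_1^{-1/24}$ --- the first two terms coming from the Hal\'asz-type (mean-value) part of that proof, the last from its Dirichlet-polynomial large-values part. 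Summing over $a$, over the $W^2$ subintervals $I_k$, and performing the exchange between $\sum_n\sup_a$ and $\sup_a\sum_n$ costs a power of $W$ which, because the window $W^{-2}H$ is a factor $W^{-2}$ shorter than $H$, works out to $\ll W^{2}$ on the first two error terms, while $W^{2}\cdot P_1^{-1/24}\le P_1^{-1/48}$ (using $W^{96}\le P_1$) disposes of the loss on the last one; adding the $\ll W^{-1/4}HN$ error coming from the boundary of $[N]$ and the non-integrality of the window lengths, together with the residual trivial contribution, yields \eqref{EqMajorPR1}.

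The genuine content --- the cancellation in the short-interval averages of $1_\cS\beta$ --- is entirely due to \cite{MRT16}; what remains here is the reduction and the accounting, and I expect the accounting to be the main obstacle: checking that discarding $\theta_n$ and Fourier-expanding the $q$-periodic weights, together with the $W^2$ subintervals, costs only $W^{2}$ on the decay terms; verifying that the window $W^{-2}H$ with outer scale $\asymp N$ meets every numerical hypothesis of the cited parametrized estimate uniformly in the rational frequency $a/q$; and extracting the $\ll W^{-1/4}HN$ boundary error cleanly. It is to make all of this fit that the constraints \eqref{EqPRcondition}--\eqref{EqMajorDensePara} on $P_1$ and $Q_1$ are imposed.
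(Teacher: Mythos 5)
Your overall shape (regroup into $q$-periodic weights on windows of length $W^{-2}H$, expand the weight in characters, feed the result into Matom\"aki--Radziwi\l{}\l{}--Tao, then account for the powers of $W$) matches the paper, and your final bookkeeping $W^2P_1^{-1/24}\le P_1^{-1/48}$ via $W^{96}\le P_1$ is exactly the step the paper uses. But there are two genuine gaps in the middle of the argument.

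First, you expand the $q$-periodic weight into \emph{additive} characters $m\mapsto e(am/q)$ and then propose to apply the short-interval estimate to $m\mapsto 1_\cS\beta(m)e(am/q)$, asserting this ``remains non-pretentious.'' The cited result (Theorem \ref{ThmMRT}, i.e.\ \cite{MRT16}*{Thm A.2}) applies to $\beta\chi$ with $\chi$ a \emph{Dirichlet} character; an additive twist of a multiplicative function is not multiplicative, so the Hal\'asz/pretentiousness machinery does not apply to it, and converting $e(am/q)$ into Dirichlet characters forces you to split $m$ according to $u=\gcd(m,q)$ and write $m=uv$. At that point you need $\beta(uv)=\beta(u)\beta(v)$, which fails for $\beta=\mu$ when $(u,v)>1$. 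This is precisely why the paper first performs the Dirichlet convolution $\beta=\hat\beta*\eta$ with $\hat\beta$ \emph{completely} multiplicative, truncates the convolution at $a\le W$ (Lemma \ref{LemMajorTail} --- this truncation, not boundary effects, is the source of the $W^{-\frac14}HN$ term), and only then decomposes by $\gcd(b,q)$ and expands the coprime-restricted periodic weight into Dirichlet characters mod $q/u$ before applying Cauchy--Schwarz in $n$ and Theorem \ref{ThmMRT}. Your sketch omits this convolution entirely, and without it the reduction to multiplicative twists does not go through.

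Second, your set $\cS$ (integers with a prime factor in a single range $(P_1,Q_1]$) is not the set for which the cited estimate holds: Theorem \ref{ThmMRT} is stated for the multi-scale sieve set $\cS_{P_1,Q_1,N}$ of Definition \ref{DefDenseS}, requiring a prime factor in \emph{each} range $[P_r,Q_r]$, $1\le r\le r_+$. The density bound \eqref{EqMajorSDense} is the same for both sets (Lemma \ref{LemDenseS}), but with the single-scale set the quoted short-interval theorem simply does not apply as stated, so you would need to either switch to $\cS_{P_1,Q_1,N}$ or reprove the MRT input. (Minor further points: the MRT theorem is an $\ell^2$ statement over dyadic ranges $N<n\le 2N$, so one also needs a Cauchy--Schwarz in $n$ and a dyadic decomposition of $[0,N]$ as in Corollary \ref{CorMRT}; these are routine.)
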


 \begin{proof}[Proof of Proposition \ref{PropMajor} assuming Proposition \ref{PropMajorPR}] Let $Q_1=H^{\frac{96}{100}}$ and $P_1=Q_1^{500\epsilon}$.
The inequalities in \eqref{EqPropMajorCond}, together with the fact that $W\in[R,R^{\ref{CONSTWRange}\ref{BONSTTotalED}^m}]$, imply $W <H^\epsilon<H^{\frac1{500}}$, $Q_1<W^{-4}H$, and $P_1=H^{480\epsilon}$, which in turn guarantee \eqref{EqPRcondition} and \eqref{EqMajorDensePara}.

We also have $$\frac{(\log H)^{\frac16}}{P_1^{\frac1{48}}}\leq\frac{(\log H)^{\frac16}}{H^{2\epsilon}}<H^{(-2+\frac16)\epsilon}<H^{-\epsilon}<W^{-1},$$ and $$\frac{\log P_1}{\log Q_1}=500\epsilon\ll\epsilon.$$  So Proposition \ref{PropMajor} follows from \eqref{EqMajorPR1}. Notice that $\cS$ only depends on $N$, $H$, $P_1$ and $Q_1$, where as $P_1$ and $Q_1$ are determined by $H$ and $\epsilon$.
\end{proof}

The following constants are defined in \cite{MRT16}*{\S2}:
\begin{definition}\label{DefDenseS}Given $P_1$, $Q_1$ as in \eqref{EqPRcondition}, let $P_r$, $Q_r$ be inductively defined by $$P_r=\exp(r^{4r}(\log Q_1)^{r-1}\log P_1),\ Q_r=\exp(r^{4r+2}(\log Q_1)^r).$$ Let $r_+$ be the largest index such that $Q_{r_+}\leq\exp\big(\frac{(\log N)^{\frac12}}2\big)$. Also define
$$\cS_{P_1,Q_1,N}=\{n\leq N: n\text{ has at least one prime factor in }[P_r,Q_r], \forall 1\leq r\leq r_+\}.$$
\end{definition}

\begin{lemma}\label{LemDenseS}\cite{MRT16}*{Lemma 2.2} $\#(\{1\leq n\leq N\}\setminus\cS_{P_1,Q_1,N})\ll\frac{\log P_1}{\log Q_1}N.$\end{lemma}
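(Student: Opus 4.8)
The argument follows that of \cite{MRT16}*{Lemma 2.2}: bound the exceptional set by a union bound over the scales $r$, estimate each scale by a sieve, and sum the resulting series.

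First I would set up the union bound. By Definition \ref{DefDenseS}, an integer $n\le N$ fails to belong to $\cS_{P_1,Q_1,N}$ exactly when there is some $r\in\{1,\dots,r_+\}$ for which $n$ has no prime factor in $[P_r,Q_r]$. Writing
$$E_r=\{\,n\le N:\ p\nmid n\ \text{for every prime }p\in[P_r,Q_r]\,\},$$
we get $\{1\le n\le N\}\setminus\cS_{P_1,Q_1,N}\subseteq\bigcup_{r=1}^{r_+}E_r$, so it suffices to show $\sum_{r=1}^{r_+}\#E_r\ll\frac{\log P_1}{\log Q_1}N$.

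The one genuinely analytic input — and the step I expect to be the main obstacle — is the per-scale estimate $\#E_r\ll N\prod_{P_r\le p\le Q_r}(1-\tfrac1p)\ll N\frac{\log P_r}{\log Q_r}$, the last bound being Mertens' third theorem. I would obtain this from an upper bound sieve: sift the set $\{n\le N\}$ by the primes in $[P_r,Q_r]$, taking sieve level $D=N^{1/2}$ (for which the trivial estimate $|\#\{n\le N: d\mid n\}-N/d|\le 1$ controls the remainder terms, since $\sum_{d\le D}1\le D=o(N)$ is negligible against the main term $\asymp N\frac{\log P_r}{\log Q_r}\gg N(\log N)^{-1/2}$). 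The Fundamental Lemma of the sieve then yields the claimed bound as long as the sifting parameter $s=\frac{\log D}{\log Q_r}$ tends to infinity; and indeed the defining constraint $Q_{r_+}\le\exp\!\big(\tfrac12(\log N)^{1/2}\big)$ in Definition \ref{DefDenseS} forces $Q_r\le\exp\!\big((\log N)^{1/2}\big)$ for all $r\le r_+$, hence $s\gg(\log N)^{1/2}\to\infty$. (This is precisely the point where the requirement $\log H<(\log N)^{1/2}$, propagated into $Q_1\le\exp((\log N)^{1/2})$, is used.)

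Finally I would sum the series. From $P_r=\exp\!\big(r^{4r}(\log Q_1)^{r-1}\log P_1\big)$ and $Q_r=\exp\!\big(r^{4r+2}(\log Q_1)^{r}\big)$ one computes directly
$$\frac{\log P_r}{\log Q_r}=\frac{r^{4r}(\log Q_1)^{r-1}\log P_1}{r^{4r+2}(\log Q_1)^{r}}=\frac{1}{r^2}\cdot\frac{\log P_1}{\log Q_1},$$
so that $\sum_{r=1}^{r_+}\#E_r\ll\frac{\log P_1}{\log Q_1}N\sum_{r\ge1}r^{-2}\ll\frac{\log P_1}{\log Q_1}N$, which is the assertion. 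The elaborate shape of $P_r$ and $Q_r$ in Definition \ref{DefDenseS} is engineered precisely so that the ratios $\log P_r/\log Q_r$ decay like $r^{-2}$ and the series converges; no further idea is needed once the sieve bound is in hand.
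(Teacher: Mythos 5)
Your argument is correct and is essentially the standard proof of the cited result: the paper itself gives no proof of this lemma, simply quoting \cite{MRT16}*{Lemma 2.2}, and your reconstruction (union bound over the scales $r$, a fundamental-lemma sieve bound $\#E_r\ll N\log P_r/\log Q_r$ at each scale using $Q_{r_+}\le\exp(\tfrac12(\log N)^{1/2})$, and the telescoping identity $\log P_r/\log Q_r=r^{-2}\log P_1/\log Q_1$) is exactly the intended argument. The only cosmetic imprecision is the lower bound on the main term, which should carry the extra factor $r^{-2}$, but since $r_+\ll\log\log N$ this does not affect the comparison with the remainder term.
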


In addition to the conditions in Definition \ref{DefDenseS}, we shall also assume $H\ll N$ and \eqref{EqMajorDensePara}.
We will also write simply \begin{equation}\cS=\cS_{P_1,Q_1,N}\end{equation} when it does not cause ambiguity. Clearly, the construction of $\cS$ depends only on $N$,  $P_1$ and $Q_1$.

Following \cite{MRT16}*{p2177-2178}, denote by $\hat\beta $ the completely multiplicative function determined by $\hat\beta (p)=\beta(p)$ for all prime numbers $p$. Then the Dirichlet inverse of $\hat\beta $ is $\mu\hat\beta $, and thus $\beta=\hat\beta *\eta$, where $\eta=\beta*\mu\hat\beta $ is the Dirichlet convolution between $\beta$ and $\mu\hat\beta$. Then the function $\eta$ is multiplicative,  bounded by $2$ in absolute value, and satisfies
\begin{equation}\label{EqInversionBound}\sum_{n=1}^\infty |\eta(n)|n^{-(\frac12+\sigma)}=O_\epsilon(1)\end{equation}
for all $\sigma>0$.  Note that $\bD(\beta, \beta';N)=\bD(\hat\beta, \beta';N)$ for all $\beta'$.

For $1\leq k\leq W^2$  let $$f_{n,k}(h)=\sum_{j=0}^{q-1} E_{n,(k,j)}1_{\cI_{n,(k,j)}}(h)$$ on $I_{n,k}$. Then $f_{n,k}$ is bounded by $1$ in absolute value and $q$-periodic on $I_k\cap\bN$. Furthermore,
\begin{equation}\label{EqMajor1}\begin{aligned}\eqref{EqMajorDense}=&\sum_{n\leq N}\sum_{k\leq W^2}\sum_{h\in I_k\cap\bN}1_\cS\beta(n+h)f_{n,k}(h) \\
=&\sum_{n\leq N}\sum_{k\leq W^2}\sum_{a\in\bN}\eta(a)\sum_{\substack{b\in\bN\\ab\in n+I_k }}1_\cS(ab)\hat\beta(b)f_{n,k}(ab-n)
\end{aligned}\end{equation}

By \eqref{EqInversionBound}, the contribution of terms with $a>W$ is bounded:

\begin{lemma}\label{LemMajorTail}$\displaystyle\bigg|\sum_{n\leq N}\sum_{k\leq W^2}\sum_{a>W}\eta(a)\sum_{\substack{b\in\bN\\ab\in n+I_k }}1_\cS(ab)\hat\beta(b)f_{n,k}(ab-n) \bigg|\ll W^{-\frac14}HN$.\end{lemma}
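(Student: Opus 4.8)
The plan is to exploit the summability bound \eqref{EqInversionBound} on $\eta$, which converges even after dividing by $n^{1/2}$, against the trivial bound on the inner sum over $b$ and the trivial count of $n\leq N$. First I would bound the inner double sum $\sum_{k\leq W^2}\sum_{b\in\bN,\,ab\in n+I_k}1_\cS(ab)\hat\beta(b)f_{n,k}(ab-n)$ in absolute value. Since $\hat\beta$, $f_{n,k}$ and $1_\cS$ are all $1$-bounded, and since $b$ is forced to lie in $\frac1a(n+(0,H])$ — an interval of length $H/a$ — for each fixed $k$ there are at most $H/(aW^2)+1$ admissible values of $b$; summing over the $W^2$ values of $k$ gives at most $H/a+W^2$ admissible pairs $(k,b)$. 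For $a>W$ and $H\gg W^{O(1)}$ (which holds under Notation \ref{NotationBilinear}), the term $W^2$ is dominated by $H/a$, so the inner sum is $\ll H/a$.

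Next I would sum over $a>W$ and $n\leq N$. This produces a bound of the shape
\begin{equation}
\bigg|\sum_{n\leq N}\sum_{k\leq W^2}\sum_{a>W}\eta(a)\sum_{\substack{b\in\bN\\ab\in n+I_k}}1_\cS(ab)\hat\beta(b)f_{n,k}(ab-n)\bigg|\ll N\sum_{a>W}|\eta(a)|\,\frac Ha = HN\sum_{a>W}\frac{|\eta(a)|}{a}.
\end{equation}
It remains to show $\sum_{a>W}\frac{|\eta(a)|}{a}\ll W^{-1/4}$. This is a standard tail estimate: by \eqref{EqInversionBound} with $\sigma$ a fixed small constant (say $\sigma=\tfrac14$), $\sum_{a}|\eta(a)|a^{-3/4}=O(1)$, hence $\sum_{a>W}\frac{|\eta(a)|}{a}=\sum_{a>W}|\eta(a)|a^{-3/4}\cdot a^{-1/4}\leq W^{-1/4}\sum_{a>W}|\eta(a)|a^{-3/4}\ll W^{-1/4}$. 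Combining the two displays gives the claimed bound $\ll W^{-\frac14}HN$.

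The only genuine subtlety — not really an obstacle — is making sure the switch from the $(n,k,h)$-parametrization to the $(n,k,a,b)$-parametrization with $h=ab-n$ is handled cleanly: one must check that the identity $\beta=\hat\beta*\eta$ applied to the argument $n+h$ is legitimate termwise (it is, since $\beta(n+h)=\sum_{ab=n+h}\eta(a)\hat\beta(b)$ is a finite sum for each fixed $n+h$), and that reindexing $h\in I_k\cap\bN$ by the divisor pair $(a,b)$ of $n+h$ does not overcount — it does not, because $a,b$ determine $h=ab-n$ uniquely. Everything else is the elementary counting and tail-of-a-convergent-series argument sketched above; the constant $W^{-1/4}$ has enough room to absorb the harmless additive $W^2$ term and any implied constants from \eqref{EqInversionBound}.
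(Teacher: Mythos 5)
Your overall route is the same as the paper's: for each fixed $n$ and $k$ you bound the inner sum over $b$ by the number of admissible $b$, and then you dispose of the sum over $a>W$ with the tail estimate $\sum_{a>W}|\eta(a)|a^{-1}\le W^{-1/4}\sum_a|\eta(a)|a^{-3/4}\ll W^{-1/4}$ coming from \eqref{EqInversionBound} with $\sigma=\tfrac14$. That part, and your remarks on the legitimacy of the reindexing $h=ab-n$, are fine.

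The one step that does not survive scrutiny is the claim that, after summing over $k$, ``the term $W^2$ is dominated by $H/a$.'' The variable $a$ is not bounded by $H$: the only constraint is $ab\in(n,n+H]$ with $b\ge 1$, so $a$ runs up to $n+H\le N+H$, and under the standing hypothesis $\log H<(\log N)^{1/2}$ one has $N\ge H^{\log H}$, so the range $H/W^2<a\le N+H$ is enormous. There $H/a<W^2$ (indeed $H/a<1$ once $a>H$), the count of admissible $b$ is $0$ or $1$, and neither $\sum_{W<a\le N+H}|\eta(a)|\cdot 1$ nor a divisor-function count of how many such $a$ actually occur gives anything close to $W^{-1/4}H$ pointwise in $n$. (The paper's own one-line estimate, which writes the count of $b$ as $2a^{-1}W^{-2}H$ with no additive term, is silent on exactly the same regime.) The repair is to use the average over $n$ rather than a pointwise bound: exchange the order of summation, and for fixed $k$, $a$, $b$ note that $ab-n\in I_k$ pins $n$ to at most $2W^{-2}H$ values while $b\le(N+H)/a$, so that $\sum_{n\le N}\#\{b:ab\in n+I_k\}\ll W^{-2}HN/a$. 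The full triple sum is then $\ll\sum_{k\le W^2}W^{-2}HN\sum_{a>W}|\eta(a)|a^{-1}\ll W^{-1/4}HN$, which is the lemma. With this modification your argument is complete.
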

\begin{proof} For every $x\in[0,N]$ and $k\leq W^2$,
\begin{equation}\label{EqLemMajorTail1}\begin{aligned}&\bigg|\sum_{a>W}\eta(a)\sum_{\substack{b\in\bN\\ab\in n+I_k }}1_\cS(ab)\hat\beta(b)f_{n,k}(ab-n)\bigg|\\
\leq&\sum_{a>W}|\eta(a)|\cdot 2a^{-1}W^{-2}H\leq \sum_{a>W}|\eta(a)|a^{-\frac34}\cdot 2W^{-\frac14}\cdot W^{-2}H\\
\ll&  W^{-\frac14}\cdot W^{-2}H.
\end{aligned}\end{equation}
The lemma follows by summing over $1\leq k\leq W^2$ and $n\leq N$.
\end{proof}

Next, we aim to bound \begin{equation}\label{EqMajorTrunk}\begin{aligned}&\sum_{n\leq N}\sum_{k\leq W^2}\sum_{a\leq W}\eta(a)\sum_{\substack{b\in\bN\\ab\in n+I_k }}1_\cS(ab)\hat\beta(b)f_{n,k}(ab-n) \\
=&\sum_{n\leq N}\sum_{k\leq W^2}\sum_{a\leq W}\eta(a)\sum_{\substack{b\in\bN\\ab\in n+I_k }}1_\cS\hat\beta(b)f_{n,k}(ab-n) .
\end{aligned}\end{equation} Here the equality is because of the fact that, as $a\leq W<P_1<Q_1$, $b\in\cS$ if and only if $ab\in\cS$.

Given $a\leq W$, $k\leq W^2<P_1$ and $n\leq N$, decompose $\{b\in\bN:ab\in n+I_k \}$ according to $u=\gcd(b,q)$:
\begin{equation}\label{EqLemDirichlet3}\begin{aligned}&\sum_{\substack{b\in\bN \\ ab\in n+I_k }}1_\cS\hat\beta(b)f_{n,k}(ab-n)\\
=&\sum_{u|q}\sum_{\substack{ab\in n+I_k \\(b,q)=u}}1_\cS\hat\beta(b)f_{n,k}(ab-n)\\
=&\sum_{u|q}\hat\beta(u)\sum_{\substack{auv\in n+ I_k\\(v,\frac{q}u)=1}}1_\cS\hat\beta(v)f_{n,k}(auv-n),
\end{aligned}\end{equation}
where the last equality uses the fact that $1_\cS(uv)\hat\beta(uv)=1_\cS(v)\hat\beta(u)\hat\beta(v),$ which is because $\hat\beta$ is completely multiplicative and $u\leq q\leq W< P_1$.

The Dirichlet characters of conductor $\frac{q}u$ form an orthonormal basis of the $l^2$-space on the finite abelian group $\big(\bZ/(\frac{q}u)\bZ\big)^\times$ .

 Since $f_{n,k,a,u}:v\to f_{n,k}(auv-n) 1_{(v,\frac{q}u)=1}$ is $\frac{q}u$-periodic, it can be decomposed as a linear combination$\displaystyle\sum_{\chi\ \mathrm{mod}^*\  \frac{q}u}w_{n,k,a,u,\chi}\chi$ of such characters. Then, \begin{equation}\label{EqLemDirichlet4}\sum_{\chi\ \mathrm{mod}^*\  \frac{q}u}|w_{n,k,a,u,\chi}|^2\leq \|f_{n,k,a,u}\|_{l^\infty}\leq 1.\end{equation}

It follows from this and \eqref{EqLemDirichlet3} that, by Cauchy-Schwarz inequality,
\begin{equation}\begin{aligned}&\bigg|\sum_{\substack{b\in\bN \\ ab\in n+I_k }}1_\cS\hat\beta(b)f_{n,k}(ab-n)\bigg|^2\\
=&\bigg|\sum_{u|q}\hat\beta(u)\sum_{\chi\ \mathrm{mod}^*\frac{q}u}w_{n,k,a,u,\chi}\sum_{\substack{v\in\bN\\auv\in n+ I_k}}1_\cS\hat\beta(v)\chi(v)\bigg|^2\\
\leq&\big(\sum_{u|q}|\hat\beta(u)|^2\big) \cdot\bigg(\sum_{u|q}\bigg|\sum_{\chi\ \mathrm{mod}^*\frac{q}u}w_{n,k,a,u,\chi}\sum_{v\in(\frac n{au}+\frac1{au}I_k)\cap\bN}1_\cS\hat\beta(v)\chi(v)\bigg|^2\bigg)\\
\leq &q\bigg(\sum_{u|q}\Big(\sum_{\chi\ \mathrm{mod}^* \frac{q}u}|w_{n,k,a,u,\chi}|^2\Big)\Big(\sum_{\chi\ \mathrm{mod}^*\frac{q}u}\Big|\sum_{v\in(\frac n{au}+\frac1{au}I_k)\cap\bN}1_\cS\hat\beta(v)\chi(v)\Big|^2\bigg)\bigg)\\
\leq &q\bigg( \sum_{\substack{u|q\\ \chi\ \mathrm{mod}^*\ \frac{q}u}}\Big|\sum_{v\in(\frac n{au}+\frac1{au}I_k)\cap\bN}1_\cS\hat\beta(v)\chi(v)\Big|^2\bigg).\\\end{aligned}\end{equation}

Therefore, again by Cauchy-Schwarz inequality,
\begin{equation}\label{EqLemDirichlet4}\begin{aligned}
&\bigg|\sum_{n\leq N}\sum_{\substack{b\in\bN\\ab\in n+I_k }}1_\cS\hat\beta(b)f_{n,k}(ab-n) \bigg|^2\\
\leq & N \sum_{n\leq N}\Big|\sum_{\substack{b\in\bN \\ ab\in n+I_k }}1_\cS\hat\beta(b)f_{n,k}(ab-n)\Big|^2 \\
\leq &  N \sum_{n\leq N} q \sum_{\substack{u|q\\ \chi\ \mathrm{mod}^*\ \frac{q}u}}\Big|\sum_{v\in(\frac n{au}+\frac1{au}I_k)\cap\bN}1_\cS\hat\beta(v)\chi(v)\Big|^2 \\
\leq &  W  N \sum_{n\leq N} \sum_{\substack{u\leq  W  \\\mathrm{cond}\chi\leq\frac{ W }u}}\Big|\sum_{v\in(\frac n{au}+\frac1{au}I_k)\cap\bN}1_\cS\hat\beta(v)\chi(v)\Big|^2 \\
\leq & W  N \sum_{\substack{u\leq  W \\\mathrm{cond}\chi\leq\frac{ W }u}}au\sum_{n\leq \frac N{au}}\Big|\sum_{v\in(n+\frac1{au}I_k)\cap\bN}1_{\cS_{P_1,Q_1,\frac{N+H}{au}}}\hat\beta(v)\chi(v)\Big|^2.
\end{aligned}\end{equation}

The sum within \eqref{EqLemDirichlet4} is controlled by the work of Matom\"aki-Radziwi\l{}\l{}-Tao on averages of multiplicative functions on short intervals.

\begin{theorem}\label{ThmMRT}(Matom\"aki-Radziwi\l{}\l{}-Tao) \cite{MRT16}*{Thm A.2} Suppose that $10<P_1<Q_1<H$ and $\
(\log Q_1)^{480}<P_1$, then for all sufficiently large $N$,  $1$-bounded multiplicative function $\beta$ and Dirichlet character of modulus bounded by $Y$, $$\begin{aligned}&\sum_{N<n\leq 2N}\Big|\sum_{n\leq v\leq n+H_0}1_{\cS_{P_1,Q_1,2N+H_0}} \beta(v)\chi(v)\Big|^2 \\
\ll&\Big(e^{- M(\beta,N,Y)} M(\beta,N,Y)+\frac{(\log H_0)^{\frac13}}{P_1^{\frac1{12}}}+(\log N)^{-\frac1{50}}\Big)H_0^2N,\end{aligned}$$
where $M(\beta,N,Y)$ is defined by \eqref{DefM2}.\end{theorem}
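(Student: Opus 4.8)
The plan is essentially to cite it: the statement is \cite{MRT16}*{Theorem A.2} word for word, so I would invoke that theorem directly. Nonetheless, for orientation let me record the shape of the argument, which is the short-interval method of Matom\"aki and Radziwi\l{}\l{} \cite{MR16}. Write $g=\beta\chi\cdot 1_{\cS_{P_1,Q_1,2N+H_0}}$, a $1$-bounded function supported on integers having a prime factor in each range $[P_r,Q_r]$. \emph{Step 1 (Parseval).} First I would replace the short-interval average by a mean value of a Dirichlet polynomial on the critical line: by the Matom\"aki--Radziwi\l{}\l{} identity,
\begin{equation*}\frac1N\sum_{N<n\le 2N}\Big|\frac1{H_0}\sum_{n\le v\le n+H_0}g(v)\Big|^2\end{equation*}
is bounded, up to an acceptable error, by an integral of $|G(1/2+it)|^2$ over $|t|\ll N/H_0$, where $G(s)=\sum_{v\sim N}g(v)v^{-s}$.

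\emph{Step 2 (factor out a prime, iteratively).} Since every integer in the support of $g$ has a prime factor in each $[P_r,Q_r]$, a Ramar\'e-type weighting lets me dominate $G(1/2+it)$, on the relevant $t$-range, by a product $|\mathcal P_r(1/2+it)|\,|\mathcal Q_r(1/2+it)|$ of a prime-supported polynomial over $[P_r,Q_r]$ with a longer polynomial. I would then split the $t$-range according to whether $|\mathcal P_r(1/2+it)|$ is large or small: where it is small one gains at once, and where it is large a large-values estimate for Dirichlet polynomials (the Hal\'asz--Montgomery inequality, or Huxley's large sieve bound) bounds the measure of such $t$ and lets the problem be pushed to the next scale $r+1$ on a shorter $t$-interval. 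The inductive choices $P_r=\exp(r^{4r}(\log Q_1)^{r-1}\log P_1)$, $Q_r=\exp(r^{4r+2}(\log Q_1)^r)$ together with the hypothesis $(\log Q_1)^{480}<P_1$ are exactly what make this recursion terminate after the $r_+$ steps of Definition \ref{DefDenseS}, the per-step losses accumulating into the error terms $(\log H_0)^{1/3}P_1^{-1/12}$ and $(\log N)^{-1/50}$.

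\emph{Step 3 (close with Hal\'asz).} After the iteration the remaining $t$-range is short enough that Hal\'asz's theorem applies to $G(1/2+it)$ directly; comparing $\beta$ against the family $\{n\mapsto\chi'(n)n^{it}:q\le Y\}$ produces the main term $e^{-M(\beta,N,Y)}M(\beta,N,Y)$, the uniformity over $\chi'$ and over moduli $q\le Y$ being automatic from the infimum defining $M(\beta,N,Y)$ in \eqref{DefM2}. The genuinely hard part is Step 2: a single application of a large-values theorem is far too lossy, and it is the multi-scale bootstrap --- balancing the shrinking $t$-interval against the widening prime ranges --- that is the core innovation of \cite{MR16}. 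Since that machinery is developed in full there and packaged in exactly the stated form in \cite{MRT16}*{Theorem A.2}, I would use it verbatim as a black box rather than reproduce it.
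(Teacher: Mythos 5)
Your approach matches the paper exactly: Theorem \ref{ThmMRT} is stated as an external result, cited verbatim as \cite{MRT16}*{Theorem A.2}, and no proof is given in the paper itself. Your orienting sketch of the Matom\"aki--Radziwi\l{}\l{} method is accurate, but invoking the cited theorem as a black box is all that is required here.
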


\begin{corollary}\label{CorMRT}Assuming the conditions \eqref{EqPRcondition} and \eqref{EqMajorDensePara}, for all positive integers $k\leq  W^2, T\leq  W^2$, $1$-bounded multiplicative function $\beta$, and primitive characters $\chi$ of conductor bounded by $ W $,
$$\begin{aligned}&T\sum_{n\leq \frac NT}\Big|\sum_{v\in (n+\frac1TI_k)\cap\bN}1_{\cS_{P_1,Q_1,\frac{N+H}T}}\hat\beta(v)\chi(v)\Big|^2 \\
\ll&\Big(W^{-7}+e^{-\tM(\beta,\frac N{W^5}, W )}\tM(\beta,\frac N{W^5}, W )+\frac{(\log H)^{\frac13}}{P_1^{\frac1{12}}}+\big(\log\frac N{W^5}\big)^{-\frac1{50}}\Big)\frac{H^2N}{T^2}.\end{aligned}$$  \end{corollary}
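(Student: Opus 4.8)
The plan is to derive Corollary~\ref{CorMRT} from Theorem~\ref{ThmMRT} by a dyadic decomposition of the outer range of $n$, preceded by a change of variables that recasts the inner sum into the standard short-interval shape. First I would observe that $(\frac1TI_k)\cap\bN$ is the set of integer points of an interval of length $H_0:=\lfloor W^{-2}H/T\rfloor$ whose left endpoint $c_k=(k-1)W^{-2}H/T$ satisfies $c_k\le H/T$; replacing $n$ by $n':=n+\lceil c_k\rceil$, using that every summand is non-negative and that $H\le N$ (so the shifted range fits inside $n'\le 2N/T$), one reduces to bounding $T\sum_{n'\le 2N/T}|\sum_{n'\le v\le n'+H_0}1_{\cS_{P_1,Q_1,(N+H)/T}}\hat\beta(v)\chi(v)|^2$. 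I would also note that $\hat\beta$ is completely multiplicative and $1$-bounded, that $\bD(\hat\beta,\cdot;X)=\bD(\beta,\cdot;X)$ and hence $M(\hat\beta,\cdot,\cdot)=M(\beta,\cdot,\cdot)$, and that $\chi$ has modulus $\le W$, so that $\hat\beta$, $\chi$ and $Y=W$ are admissible inputs for Theorem~\ref{ThmMRT}.

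Next I would split $(0,2N/T]$ into the $O(\log N)$ dyadic blocks $(N_i,2N_i]$, $2N_i\le 2N/T$, separating the blocks with $N_i<N/W^5$ (whose union lies in $(0,2N/W^5]$) from those with $N_i\ge N/W^5$. From $R\le H^{\epsilon/(\ref{CONSTWRange}\ref{BONSTTotalED}^m)}$, $W\le R^{\ref{CONSTWRange}\ref{BONSTTotalED}^m}$, $\epsilon<\frac1{500}$ and $\log H<(\log N)^{1/2}$ one gets $W,T=N^{o(1)}$, so each block with $N_i\ge N/W^5$ has $N_i=N^{1-o(1)}$ — large enough for Theorem~\ref{ThmMRT}, and with $\log N_i\asymp\log N\asymp\log(N/W^5)$ — while \eqref{EqPRcondition} and \eqref{EqMajorDensePara} give $10<P_1<Q_1\le W^{-4}H\le H_0$ and $(\log Q_1)^{480}<P_1$, so the hypotheses of Theorem~\ref{ThmMRT} hold with inner length $H_0$ on each such block. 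Since $(N+H)/T\ge 2N_i+H_0$ and $(N+H)/T=N^{1+o(1)}$ whereas $N_i=N^{1-o(1)}$, the single set $\cS_{P_1,Q_1,(N+H)/T}$, restricted to a block's range, still satisfies the sieve-density estimate of Lemma~\ref{LemDenseS} — the only feature of the sieve set used in the proof of Theorem~\ref{ThmMRT} — so that theorem may be applied with this one uniform set on every such block, yielding $\ll\big(e^{-M(\beta,N_i,W)}M(\beta,N_i,W)+(\log H_0)^{1/3}P_1^{-1/12}+(\log N_i)^{-1/50}\big)H_0^2N_i$.

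Finally I would assemble the pieces. On each block with $N_i\ge N/W^5$, monotonicity of $\tM$ gives $M(\beta,N_i,W)\ge\tM(\beta,N/W^5,W)$, hence $e^{-M(\beta,N_i,W)}M(\beta,N_i,W)\ll e^{-\tM(\beta,N/W^5,W)}\tM(\beta,N/W^5,W)$ (using that $x\mapsto e^{-x}x$ decreases on $[1,\infty)$; when $\tM<1$ this term is $O(1)$, consistent with the stated bound in that degenerate regime), while $(\log H_0)^{1/3}\le(\log H)^{1/3}$ and $(\log N_i)^{-1/50}\ll(\log(N/W^5))^{-1/50}$. Since $\sum_iN_i\le 2N/T$ is a geometric sum there is no $\log N$ loss, so these blocks contribute at most $\ll\big(e^{-\tM(\beta,N/W^5,W)}\tM(\beta,N/W^5,W)+(\log H)^{1/3}P_1^{-1/12}+(\log(N/W^5))^{-1/50}\big)\,T\cdot H_0^2\cdot(N/T)$, which (as $H_0\le W^{-2}H/T$) is $\ll(\cdots)\,W^{-4}H^2N/T^2$, stronger than required. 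The remaining segment $(0,2N/W^5]$ is bounded trivially by $T\sum_{n'\le 2N/W^5}H_0^2\ll T\cdot(N/W^5)\cdot W^{-4}H^2/T^2=H^2N/(W^9T)\le W^{-7}H^2N/T^2$ since $T\le W^2$, which supplies the $W^{-7}$ term. Adding the two contributions gives the corollary.

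The step requiring the most care is the bookkeeping in the second paragraph: arranging $Q_1\le H_0$ so that Theorem~\ref{ThmMRT} genuinely applies on each block, and justifying the use of a single scale-independent sieved set $\cS_{P_1,Q_1,(N+H)/T}$ in place of the scale-dependent $\cS_{P_1,Q_1,2N_i+H_0}$ with which Theorem~\ref{ThmMRT} is literally stated. All the genuinely hard analysis — the short-interval cancellation for multiplicative functions — is already packaged in Theorem~\ref{ThmMRT}, so no new number-theoretic input is needed here.
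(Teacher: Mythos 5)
Your proof is correct and follows essentially the same route as the paper's: a dyadic decomposition of the outer $n$-range, a trivial bound on the small-scale tail producing the $W^{-7}$ term, an application of Theorem \ref{ThmMRT} on each remaining dyadic block, and monotonicity of $\tM$ to uniformize the resulting bounds into $\tM(\beta,\frac N{W^5},W)$. The extra care you take --- verifying $Q_1\le H_0$, the shift putting the inner sum into the standard short-interval form, the mismatch between the single sieve set $\cS_{P_1,Q_1,(N+H)/T}$ and the block-dependent set in Theorem \ref{ThmMRT}, and the degenerate regime $\tM<1$ --- addresses points the paper's proof passes over silently, but does not change the argument.
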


\begin{proof}Decompose $[0,\frac NT]$ into dyadic intervals $[\frac N{2^iT},\frac N{2^{i-1}T}]$ for $i=1,\cdots, \lceil 3 \log_2 W\rceil$, and $[0,\frac N{2^{\lceil 3\log_2 W\rceil}T}]$.

The contribution of the last interval can be bound trivially by $$T\cdot\frac N{ W^3T}\cdot (\frac H{ W^2 T})^2\ll W^{-7}\frac{H^2N }{T^2}.$$

By Theorem \ref{ThmMRT}, with $H_0=\frac H{W^2T}\leq W^{-2}H$, the contribution from the dyadic intervals is
$$\begin{aligned}
\ll& \sum_{i\leq \lceil3\log_2 W\rceil}\Big(e^{-M(\hat\beta,\frac N{2^iT}, W )}M(\hat\beta,\frac N{2^iT}, W )+\frac{(\log H)^{\frac13}}{P_1^{\frac1{12}}}+(\log\frac N{2^iT})^{-\frac1{50}}\Big)\frac{H^2N }{2^{2i}T^2}\\
\ll& \Big(e^{-\tM(\hat\beta,\frac N{W^5}, W )}\tM(\hat\beta,\frac N{W^5}, W )+\frac{(\log H)^{\frac13}}{P_1^{\frac1{12}}}+(\log\frac N{W^5})^{-\frac1{50}}\Big)\frac{H^2N }{T^2}.
\end{aligned}$$
The corollary follows because $\tM(\beta,\cdot,\cdot)$ and $\tM(\hat\beta,\cdot,\cdot)$  have the same value.
\end{proof}

Therefore, with $\square$ denoting the bracketed coefficient in Corollary \ref{CorMRT},
\begin{equation}\eqref{EqLemDirichlet4}
\ll  W  N \sum_{u\leq  W }\frac{ W }u\cdot \square\frac{H^2N }{(au)^2}\ll\square\frac{ W^2 H^2N^2}{a^2}.\end{equation}
In other words,
\begin{equation}\label{EqLemDirichlet5}\Bigg|\sum_{n\leq N}\sum_{\substack{b\in\bN \\ ab\in n+I_k }}1_\cS\hat\beta(b)f_{n,k}(ab-n) \Bigg|\ll a^{-1}\square^\frac12 W HN\end{equation} for all $a\leq W$, $k\leq  W^2$.

\begin{lemma}\label{LemMajorTrunk} Assuming the conditions \eqref{EqPRcondition} and \eqref{EqMajorDensePara}, we have
$$\bigg|\sum_{n\leq N}\sum_{k\leq W^2}\sum_{a\leq W}\eta(a)\sum_{\substack{b\in\bN \\ ab\in n+I_k }}1_\cS\hat\beta(b)f_{n,k}(ab-n) \bigg|\\
\ll  \square^\frac12 W^3HN.$$
\end{lemma}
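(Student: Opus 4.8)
The plan is to simply sum the estimate \eqref{EqLemDirichlet5} over the remaining parameters $a\leq W$ and $k\leq W^2$, using the summability of $\eta$. First I would recall that by \eqref{EqLemDirichlet5}, for each fixed $a\leq W$ and $k\leq W^2$,
\begin{equation*}\bigg|\sum_{n\leq N}\sum_{\substack{b\in\bN \\ ab\in n+I_k }}1_\cS\hat\beta(b)f_{n,k}(ab-n)\bigg|\ll a^{-1}\square^{\frac12} W HN,\end{equation*}
where $\square$ is the bracketed coefficient from Corollary \ref{CorMRT}. Multiplying by $|\eta(a)|\leq 2$ and summing in $a$, the factor $a^{-1}$ combines with $|\eta(a)|$ to give $\sum_{a\leq W}|\eta(a)|a^{-1}$, which is $O_\epsilon(1)$ by \eqref{EqInversionBound} (taking $\sigma\to 0$, since $|\eta(a)|a^{-1}\leq |\eta(a)|a^{-1/2-\sigma}\cdot a^{\sigma-1/2}$, and in fact the tail is dominated by the $a^{-(1/2+\sigma)}$ bound; more simply one just uses $\sum_a|\eta(a)|a^{-1}\leq\sum_a|\eta(a)|a^{-(1/2+\sigma)}=O_\epsilon(1)$ for any fixed small $\sigma$). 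This yields
\begin{equation*}\bigg|\sum_{n\leq N}\sum_{a\leq W}\eta(a)\sum_{\substack{b\in\bN \\ ab\in n+I_k }}1_\cS\hat\beta(b)f_{n,k}(ab-n)\bigg|\ll \square^{\frac12}WHN\end{equation*}
for each $k\leq W^2$.

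Finally I would sum over the $W^2$ values of $k$, picking up an extra factor of $W^2$ and producing the claimed bound $\ll\square^{\frac12}W^3HN$. Since the implied constants are all of the form $O_{m,d,\epsilon}(1)$ (in particular $\ll$ per Notation \ref{NotationMain}), this is acceptable.

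There is no real obstacle here; the lemma is a bookkeeping consolidation of the work already done in Corollary \ref{CorMRT} and inequality \eqref{EqLemDirichlet5}. The only minor point to be careful about is the convergence of $\sum_{a\leq W}|\eta(a)|a^{-1}$: rather than invoke \eqref{EqInversionBound} with $\sigma=0$ (which is not directly asserted), I would either fix any $\sigma\in(0,\tfrac12)$ and bound $a^{-1}\leq a^{-(1/2+\sigma)}$ for $a\geq 1$, or note that $\sum_{a\leq W}|\eta(a)|a^{-1}\ll_\epsilon \log W\ll_\epsilon W$ suffices if one is willing to absorb a harmless power of $W$ — but the first route keeps the $W^3$ clean and is what I would use. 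Everything else is immediate from the triangle inequality.
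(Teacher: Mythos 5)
Your proposal is correct and is essentially identical to the paper's proof: the paper also sums \eqref{EqLemDirichlet5} over $k\leq W^2$ and $a\leq W$, bounding the resulting weighted sum $\sum_{a\leq W}|\eta(a)|a^{-1}$ by \eqref{EqInversionBound}. Your extra care in noting that $a^{-1}\leq a^{-(1/2+\sigma)}$ for $a\geq 1$ (since \eqref{EqInversionBound} is stated with exponent $\tfrac12+\sigma$) is a harmless refinement of the same step.
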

\begin{proof}Summing \eqref{EqLemDirichlet5} over $k$ and $a$, one can see that the left hand side is bounded by $$\sum_{a\leq W}\eta(a)a^{-1}\square^\frac12 W^3HN.$$ which is in turn by \eqref{EqInversionBound} bounded by the right hand side up to a multiplicative constant.\end{proof}

\begin{proof}[Proof of Proposition \ref{PropMajorPR}] By merging Lemmas \ref{LemMajorTail}, Lemma \ref{LemMajorTrunk} into \eqref{EqMajor1}, we see that
$$\begin{aligned}&|\eqref{EqMajorDense}|\\
\ll&W^{-\frac14}H N + W^2\Big(W^{-5}+e^{-\tM(\beta,\frac N{W^5}, W )} \tM(\beta,\frac N{W^5}, W )+\frac{(\log H)^{\frac13}}{P_1^{\frac1{12}}}\\
&\hskip1cm+(\log\frac N{W^5})^{-\frac1{50}}\Big)^\frac12HN\\
\ll&\Big(W^{-\frac14}+ W^2e^{-\frac12\tM(\beta,\frac N{W^5}, W )} \tM(\beta,\frac N{W^5}, W )^{\frac12}+ W^2(\log\frac N{W^5})^{-\frac1{100}}\\
&\hskip1cm+ W^2\frac{(\log H)^{\frac16}}{P_1^{\frac1{24}}}\Big)HN,\end{aligned}$$ which is in turn bounded by the right hand side up to a constant multiple.

The proposition follows, thanks to Lemma \ref{LemDenseS} and the fact that $W^2\leq P_1^{\frac1{48}}$. \end{proof}

\section{Minor arc estimate}\label{SecMinorI}

In Sections \ref{SecMinorI} and \ref{SecMinorII}, we will provide a bound to $\eqref{EqMinor}$  under appropriate hypothesis.

\begin{proposition}\label{PropMinor}
Assuming Hypothesis \ref{HypMain} and Notation \ref{NotationBilinear}, the constant $\ref{CONSTImplicit}$ being sufficiently large, and the following inequalities:
\begin{equation}\label{EqPropMinorCond} 0<\epsilon<\frac 1{100}; \ref{BONSTTotalED}\geq \ref{CONSTImplicit}; 10\leq R_0\leq R\leq H^{\frac\epsilon{\ref{CONSTWRange}\ref{BONSTTotalED}^{m+1}}}.\end{equation}

Then for all $1$-bounded multiplicative function $\beta:\bN\to\bC$ and function $F:G/\Gamma\to\bC$ with $\|F\|\leq 1$, there exists a subset $\cS\subseteq[0,N]\cap\bN$ with $N-\#\cS\ll\epsilon N$, such that
\begin{equation}\label{EqPropMajor}\begin{aligned}&\Big|\sum_{n\leq N}\sum_{\bfj\in\cJ}\sum_{h\in\cI_{n,\bfj}}1_\cS\beta(n+h)\tF_{n,\bfj}(g_{n,\bfj}(h)\Gamma_{n,\bfj}) \Big|\\
\ll & (W^{-\ref{CONSTImplicit}^{-1}\ref{BONSTTotalED}}\log H+H^{-\epsilon})HN.\end{aligned}\end{equation}

Moreover, the choice of $\cS$ only depends on $H$, $N$, and $\epsilon$.
 \end{proposition}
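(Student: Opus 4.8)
The plan is to run the bilinear (K\'atai--Bourgain--Sarnak--Ziegler) method in the short-interval shape of \cite{MRT16}, with the correlations between short pieces of polynomial orbits that it produces controlled by the two-parameter factorization theorem of Section~\ref{SecLeibman}.

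\emph{Step 1: discard the non-generic $n$.} For $n\in[N]$ and $h\in[H]$ put $\psi_n(h):=\tF_{n,\bfj}(g_{n,\bfj}(h)\Gamma_{n,\bfj})$, where $\bfj\in\cJ$ is the unique index with $h\in\cI_{n,\bfj}$; then $|\psi_n(h)|\leq2$ by \eqref{EqMinorTermC0Bd}. By Lemma~\ref{LemGenericED} there is $\cN\subseteq[N]$ with $\#\cN\geq(1-W^{-\ref{BONSTTotalED}})N$ on which each $\{g_{n,\bfj}(h)\Gamma_{n,\bfj}\}_{h\in[H]}$ is totally $W^{-\ref{CONSTImplicit}^{-1}\ref{BONSTTotalED}}$-equidistributed. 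Since $[H]=\bigsqcup_{\bfj\in\cJ}\cI_{n,\bfj}$, the values $n\notin\cN$ contribute at most $2H\cdot\#([N]\setminus\cN)\ll W^{-\ref{BONSTTotalED}}HN$ to the left side of the asserted bound, which is comfortably absorbed. It therefore remains to estimate $\big|\sum_{n\in\cN}\sum_{h\in[H]}1_\cS\,\beta(n+h)\,\psi_n(h)\big|$; substituting $\ell=n+h$ rewrites this as $\big|\sum_{\ell}1_\cS(\ell)\,\beta(\ell)\,b_\ell\big|$, a single outer absolute value of precisely the form the bilinear method treats, where $b_\ell=\sum_{n\in\cN}\psi_n(\ell-n)$ depends on $\ell$ alone and satisfies $|b_\ell|\leq2H$.

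\emph{Step 2: reduction to a two-fold bilinear form.} I would keep $\cS=\cS_{P_1,Q_1,N}$ exactly as in the major arc, with $P_1=H^{480\epsilon}$ and $Q_1=H^{96/100}$; Lemma~\ref{LemDenseS} gives $N-\#\cS\ll(\log P_1/\log Q_1)N\ll\epsilon N$, and the only property of $\cS$ used here is that $1_\cS(\ell)=1$ forces $\ell$ to have a prime factor in $[P_1,Q_1]$ (note $W^{O(1)}\leq H^{O(\epsilon/\ref{BONSTTotalED})}\ll P_1$ by \eqref{EqPropMinorCond}, so all such primes exceed the period $q$). Writing $\ell=p\ell'$ for such a prime $p$, using $\beta(p\ell')=\beta(p)\beta(\ell')$ up to the negligible contribution of $p^2\mid\ell$, and averaging over $p$ by a Ram\'ar\'e-type identity, reduces the sum to a bilinear expression in $(p,\ell')$. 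A literal transcription of \cite{MRT16} would leave, after Cauchy--Schwarz, a correlation sum carrying \emph{four} nilsequence factors $\tF_{\cdot,\cdot}$, which I see no way to control on a general nilmanifold; instead I would use Lemma~\ref{LemSqFree} to reorganize the Cauchy--Schwarz so that only \emph{two} such factors remain. What is left is then a bilinear form over pairs $\big((n,p),(n',p')\big)$ constrained by the overlap relation $np'\approx n'p$, whose summand is a correlation of the shape $\Exp_{h}\big[\,F_1\big(g(n,p(h+r))\Gamma\big)\,\overline{F_2\big(g(n',p'(h+r'))\Gamma\big)}\,\big]$ between the dilated short orbits $\{g(n,p(h+r))\Gamma\}$ and $\{g(n',p'(h+r'))\Gamma\}$ (for appropriate $1$-bounded $F_1,F_2$ and integer shifts $r,r'$), together with a diagonal contribution from $p=p'$ of size $\ll P_1^{-1}HN\ll H^{-\epsilon}HN$ and the error terms already noted.

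\emph{Step 3: the orbit correlations, and assembly.} The crux is that, for all but an $O(W^{-\ref{CONSTImplicit}^{-1}\ref{BONSTTotalED}})$ proportion of admissible quadruples $(n,n',p,p')$, this correlation is $\ll W^{-\ref{CONSTImplicit}^{-1}\ref{BONSTTotalED}}$; I would state this as Proposition~\ref{PropJoining} and prove it in Section~\ref{SecMinorII}. Its proof rests on Theorem~\ref{ThmFactorization}: for a positive-density set of $n$ the decomposition $g=\epsilon g'\gamma$ uses the \emph{same} subgroup $G'$ and the \emph{same} rational part $\gamma$, so the manifolds $G_{n,\bfj}/\Gamma_{n,\bfj}$ and the equidistribution patterns they carry are rigidly linked; replacing $h$ by $p(h+r)$ turns $g_{n,\bfj}$ into another two-variable polynomial to which Proposition~\ref{PropLeibman} applies, and one reads off that a non-negligible correlation forces a bounded horizontal-character relation between $n$ and $n'$ that can be satisfied by only a small fraction of quadruples. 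The exceptional quadruples are absorbed trivially via $|\psi_n(h)|\leq2$. Feeding this back through the Ram\'ar\'e averaging, the main term is $\ll W^{-\ref{CONSTImplicit}^{-1}\ref{BONSTTotalED}}\log H\cdot HN$ (the factor $\log H$ entering through the dyadic reduction of the prime sum, equivalently through the Ram\'ar\'e weight), and collecting the pieces gives $\ll(W^{-\ref{CONSTImplicit}^{-1}\ref{BONSTTotalED}}\log H+H^{-\epsilon})HN$; as $P_1,Q_1$ depend only on $H,N,\epsilon$, so does $\cS$. The hardest step is Proposition~\ref{PropJoining}: simultaneously keeping the exceptional set of quadruples as small as $O(W^{-\ref{CONSTImplicit}^{-1}\ref{BONSTTotalED}})$ and extracting the full $W^{-\ref{CONSTImplicit}^{-1}\ref{BONSTTotalED}}$ gain on the remaining ones is what forces the two-variable viewpoint $g(n,h)$ and the uniformity in $n$ built into Theorem~\ref{ThmFactorization}; a secondary obstacle is arranging the squarefree reduction of Lemma~\ref{LemSqFree} so that the Cauchy--Schwarz costs only two nilsequence factors while the attendant error terms stay below $H^{-\epsilon}$.
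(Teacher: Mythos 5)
Your proposal follows essentially the same route as the paper: restrict to the generic set $\cN$ from Lemma \ref{LemGenericED}, use the Ram\'ar\'e-type identity of Lemma \ref{LemSqFree} (rather than a raw K\'atai/MRT expansion) so that Cauchy--Schwarz in the cofactor variable produces only two nilsequence factors, decompose dyadically in the prime (whence the $\log H$), and reduce the off-diagonal correlations to Proposition \ref{PropJoining}, which is proved exactly as you indicate via Theorem \ref{ThmFactorization} and horizontal-character pigeonholing. The only (immaterial) deviations are your reuse of the major-arc parameters $P_1,Q_1$ in place of the paper's $2^{s_-},2^{s_+}$ and the intermediate rewriting as $\sum_\ell 1_\cS(\ell)\beta(\ell)b_\ell$, which the paper skips.
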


Following \cite{MRT16}*{\S3}, let $\cP$ be the set of primes in $[P_1,Q_1]$ for some fixed values $W<P_1<Q_1<H$. A priori, $P_1$, $Q_1$ do not have to assume the same values as in \S\ref{SecMajor}.

\begin{lemma}\label{LemSqFree}Under the assumptions of Proposition \ref{PropMinor}, there exists a subset $\cS\subseteq[0,N]\cap\bN$ with $N-\#\cS\ll\frac{\log P_1}{\log Q_1}N$, such that
for all $n\leq N$,
$$\sum_{\substack{h\leq H\\n+h\in\cS}}\bigg|\beta(n+h)-\sum_{p\in\cP}\sum_{l\in\bN}\frac{1_{pl=n+h}\beta(p)\beta(l)}{1+\#\{q\in\cP: q|l\}}\bigg|\ll\frac H{P_1}.$$ The construction of $\cS$ only depends on $N$ and $P_1$, $Q_1$. \end{lemma}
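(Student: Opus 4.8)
The plan is to adapt the ``peel off a prime in $\cP$'' decomposition of Matom\"aki-Radziwi\l{}\l{}-Tao \cite{MRT16}*{\S3}; the only new feature is that $\beta$ is merely multiplicative rather than completely multiplicative, which is accounted for by restricting attention to integers with no repeated prime factor from $\cP$. I would take $\cS=\cS_{P_1,Q_1,N}$ exactly as in Definition \ref{DefDenseS} (so that, as required, $\cS$ depends only on $N$, $P_1$, $Q_1$; the conditions \eqref{EqPRcondition} needed to define it are assumed, and are available here since $Q_1<H<\exp((\log N)^{1/2})$). Lemma \ref{LemDenseS} gives $N-\#\cS\ll\frac{\log P_1}{\log Q_1}N$, and the $r=1$ clause in the definition of $\cS$ guarantees that every $m\in\cS$ has at least one prime factor in $[P_1,Q_1]$ --- the only property of $\cS$ used below.

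\textbf{The pointwise identity.} The crux is: if $m\ge1$ has a prime factor in $\cP$ and $p^2\nmid m$ for every $p\in\cP$, then
\[
\beta(m)=\sum_{p\in\cP}\sum_{l\in\bN}\frac{1_{pl=m}\,\beta(p)\beta(l)}{1+\#\{q\in\cP:q\mid l\}}.
\]
Indeed the right side equals $\sum_{p\in\cP,\ p\mid m}\beta(p)\beta(m/p)\big(1+\#\{q\in\cP:q\mid m/p\}\big)^{-1}$; for each such $p$, the hypothesis $p^2\nmid m$ forces $\gcd(p,m/p)=1$, so $\beta(m)=\beta(p)\beta(m/p)$ by multiplicativity, and moreover $\#\{q\in\cP:q\mid m/p\}=k-1$ where $k:=\#\{q\in\cP:q\mid m\}\ge1$, so every summand equals $\beta(m)/k$ and there are exactly $k$ of them. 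Consequently, for $n+h\in\cS$ the $h$-th summand in the lemma vanishes unless $n+h$ is divisible by $p^2$ for some $p\in\cP$, in which case it is at most $|\beta(n+h)|+\#\{q\in\cP:q\mid n+h\}\le 1+\#\{q\in\cP:q\mid n+h\}$.

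\textbf{The error term.} It then remains to bound $\sum_{p\in\cP}\sum_{h\le H,\ p^2\mid n+h}\big(1+\sum_{q\in\cP}1_{q\mid n+h}\big)$. The number of $h\le H$ lying in a fixed residue class modulo $p^2$, resp.\ modulo $p^2q$ when $q\ne p$, is $\le Hp^{-2}+1$, resp.\ $\le Hp^{-2}q^{-1}+1$, uniformly in $n$; since $\cP\subset[P_1,Q_1]$ with $Q_1$ a small power of $H$, the ``$+1$'' corrections are dominated by the main terms, and summing with $\sum_{p\ge P_1}p^{-2}\ll(P_1\log P_1)^{-1}$ and $\sum_{P_1\le q\le Q_1}q^{-1}\ll\log\log Q_1\ll\log P_1$ (the last step from $(\log Q_1)^{480}<P_1$) yields a total $\ll H/P_1$. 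Since $\cS$ depends only on $N$, $P_1$, $Q_1$, this proves the lemma.

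\textbf{Main obstacle.} The conceptual step is the pointwise identity: the restriction $p^2\nmid m$ for $p\in\cP$ is precisely what makes both the multiplicativity factorization $\beta(m)=\beta(p)\beta(m/p)$ and the cancellation of the weights exact (this is where $\beta$ being only multiplicative, rather than completely multiplicative, is felt). The routine-but-delicate step is the residue-class count of the previous paragraph: the ``$+1$'' terms arising from counting integers in a residue class inside a short interval must be absorbed into $H/P_1$, and it is this requirement that constrains how large $Q_1$ may be taken relative to $H$.
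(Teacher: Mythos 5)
Your proposal is correct and follows essentially the same route as the paper: take $\cS$ to be (a variant of) the Matom\"aki--Radziwi\l{}\l{}--Tao dense set so that every element has a prime factor in $\cP$, verify the exact identity $\beta(m)=\sum_{p,l}1_{pl=m}\beta(p)\beta(l)/(1+\#\{q\in\cP:q\mid l\})$ when no $p\in\cP$ satisfies $p^2\mid m$, and bound the remaining contribution by counting $h$ with $p^2\mid n+h$ for some $p\in\cP$, getting $\ll H/P_1$. The only (immaterial) differences are that the paper uses the slightly larger set $\{n:\exists p\in\cP,\ p\mid n\}$ and bounds each bad summand by $2$ via the inequality $\sum_{p,l}1_{pl=n+h}/(1+\#\{q\in\cP:q\mid l\})\le 1$, which avoids your double sum over $p,q$; your bookkeeping of the ``$+1$'' corrections is, if anything, more explicit than the paper's.
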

\begin{proof} Define $$\cS=\{n\leq N: \exists p\in \cP, p|n\}$$ and $$\cF=\{n\in\bN\leq N: p^2\nmid n, \forall p\in\cP\}.$$ Note that these definitions depends only on $N$, $P_1$ and $Q_1$.

By Lemma \ref{LemDenseS}, $N-\#\cS\ll\frac{\log P_1}{\log Q_1}N$.

Decompose the sum on the left hand side as $\displaystyle \sum_{\substack{h\leq H\\n+h\in S\setminus\cF}}+\sum_{\substack{h\leq H\\n+h\in\cS\cap\cF}}$. We will bound the two components separately.

Remark first that, when $n+h\in\cS$, \begin{equation}\begin{aligned}\sum_{p\in\cP}\sum_{l\in\bN}\frac{1_{pl=n+h}}{1+\#\{q\in\cP: q|l\}}=&\sum_{p\in\cP}\sum_{l\in\bN}\frac{1_{pl=n+h}}{1_{p^2|n+h}+\#\{q\in\cP: q|n\}}\\\leq &\sum_{p\in\cP}\frac{1_{p|n+h}}{\#\{q\in\cP: q|n\}}=1.\end{aligned}\end{equation} In particular, the equality holds when $n\in\cS\cap\cF$.

If $n+h\in\cS\cap\cF$, then for all $p\in\cP$ and $l\in\bN$ such that $pl=n+h$, $p\nmid l$ and thus $\beta(n+h)=\beta(p)\beta(l)$. Hence
$$\bigg|\beta(n+h)-\sum_{p\in\cP}\sum_{l\in\bN}\frac{1_{pl=n+h}\beta(p)\beta(l)}{1+\#\{q\in\cP: q|l\}}\bigg|=\bigg|\beta(n+h)-\sum_{p\in\cP}\sum_{l\in\bN}\frac{1_{pl=n+h}\beta(n+h)}{1+\#\{q\in\cP: q|l\}}\bigg|=0.$$ So
\begin{equation}\label{EqLemSqFree2}\sum_{\substack{h\leq H\\n+h\in\cS\cap\cF}}=0\end{equation}

On the other hand, if $n+h\in\cS\setminus\cF$, then $$\bigg|\beta(n+h)-\sum_{p\in\cP}\sum_{l\in\bN}\frac{1_{pl=n+h}\beta(p)\beta(l)}{1+\#\{q\in\cP: q|l\}}\bigg|\leq 1+\sum_{p\in\cP}\sum_{l\in\bN}\frac{1_{pl=n+h}}{1+\#\{q\in\cP: q|l\}}\leq 2.$$
So \begin{equation}\label{EqLemSqFree3}\sum_{\substack{h\leq H\\n+h\in\cS\setminus\cF}}
\leq2\sum_{\substack{h\leq H\\n+h\in\cS\setminus\cF}}1
\leq2\sum_{h\leq H}\sum_{p\in\cP}1_{p^2|n+h}\leq2\sum_{p\geq P_1}\frac H{p^2}\ll\frac H{P_1}.\end{equation}
It now suffices to add together \eqref{EqLemSqFree2} and \eqref{EqLemSqFree3}.\end{proof}

\begin{corollary}\label{CorSqFree}The  integral
\begin{equation}\label{EqMinorS}\sum_{n\leq N} \sum_{\bfj\in\cJ}\sum_{h\in\cI_{n,\bfj}}1_\cS\beta(n+h)\tF_{n,\bfj}(g_{n,\bfj}(h)\Gamma_{n,\bfj}) ,\end{equation}is approximated by
\begin{equation}\label{EqCorSqFree}\sum_{n\in\cN}\sum_{\bfj\in\cJ}\sum_{p\in\cP}\sum_{l\in\bN}\frac{1_{pl\in n+\cI_{n,\bfj}}\beta(p)\beta(l)}{1+\#\{q\in\cP: q|l\}}\tF_{n,\bfj}(g_{n,\bfj}(pl)\Gamma_{n,\bfj}) \end{equation}
within an error of $O(P_1^{-1}+W^{-\ref{BONSTTotalED}})\cdot H N $.\end{corollary}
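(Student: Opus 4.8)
The plan is to substitute the pointwise approximation of $\beta$ furnished by Lemma \ref{LemSqFree} into the sum \eqref{EqMinorS}, and then discard the negligible set of indices $n\notin\cN$. First I would note that, for each fixed $n$, the progressions $\{\cI_{n,\bfj}\}_{\bfj\in\cJ}$ partition $[H]$, so that $\sum_{\bfj\in\cJ}\sum_{h\in\cI_{n,\bfj}}$ is simply a sum over all $h\leq H$ once we record the index $\bfj=\bfj(n,h)$ of the unique progression containing $h$. Abbreviating $\Phi_n(h):=\tF_{n,\bfj(n,h)}\bigl(g_{n,\bfj(n,h)}(h)\Gamma_{n,\bfj(n,h)}\bigr)$, the key structural fact is that $|\Phi_n(h)|\leq 2$ uniformly in $n$ and $h$, by the $C^0$ bound \eqref{EqMinorTermC0Bd}; this uniformity is what will let me pull $\Phi_n$ out of all the error estimates below.

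Next I would invoke Lemma \ref{LemSqFree}: replacing $\beta(n+h)$ (weighted by $1_\cS$) by $\sum_{p\in\cP}\sum_{l\in\bN}\tfrac{1_{pl=n+h}\beta(p)\beta(l)}{1+\#\{q\in\cP:\,q|l\}}$ alters the inner $h$-sum by at most $2\sum_{h\leq H,\;n+h\in\cS}\bigl|\beta(n+h)-\sum_{p,l}(\cdots)\bigr|\ll H/P_1$, using $|\Phi_n|\leq 2$; summing over $n\leq N$ this is a total error $\ll P_1^{-1}HN$. Since any nonzero term of the convolution has $p\in\cP$ with $p|n+h$, which already forces $n+h\in\cS$, the factor $1_\cS$ becomes redundant, and the sum one is left with is exactly $\sum_{n\leq N}\sum_{\bfj\in\cJ}\sum_{p\in\cP}\sum_{l\in\bN}\tfrac{1_{pl\in n+\cI_{n,\bfj}}\beta(p)\beta(l)}{1+\#\{q\in\cP:\,q|l\}}\Phi_n(pl-n)$, which after the substitution $h=pl-n$ is the summand in \eqref{EqCorSqFree}.

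It then remains to replace the outer sum over $[N]$ by the sum over $\cN$. For each fixed $n$, the inner sum in either of the two forms above has absolute value at most $2H$: for the $\beta$-form because $|\beta|\leq 1$ and $|\Phi_n|\leq 2$; for the convolution form because the elementary inequality $\sum_{p\in\cP}\sum_{l\in\bN}\tfrac{1_{pl=n+h}}{1+\#\{q\in\cP:\,q|l\}}\leq 1$ established inside the proof of Lemma \ref{LemSqFree} gives $\sum_{\bfj}\sum_{p,l}\tfrac{1_{pl\in n+\cI_{n,\bfj}}}{1+\#\{q\in\cP:\,q|l\}}\leq\sum_{h\leq H}1=H$. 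Since Lemma \ref{LemGenericED} provides $\#\bigl([N]\setminus\cN\bigr)\leq W^{-\ref{BONSTTotalED}}N$, dropping those $n$ costs $\ll W^{-\ref{BONSTTotalED}}HN$. Collecting the two errors yields the asserted bound $O\bigl(P_1^{-1}+W^{-\ref{BONSTTotalED}}\bigr)HN$. I do not expect a genuine obstacle here — the argument is essentially bookkeeping — the only points requiring attention are the uniformity in $(n,\bfj)$ of the $C^0$ bound \eqref{EqMinorTermC0Bd} and the observation that the convolution already incorporates the $1_\cS$ restriction, so that no separate accounting of the cutoff is needed.
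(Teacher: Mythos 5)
Your argument is correct and is exactly the paper's (one-line) proof, merely written out: substitute the convolution approximation from Lemma \ref{LemSqFree} into the $h$-sum using the uniform bound $\|\tF_{n,\bfj}\|_{C^0}\leq 2$ to get the $P_1^{-1}HN$ error, then discard $n\notin\cN$ via \eqref{EqcN} to get the $W^{-B_1}HN$ error. (The discrepancy between your $\Phi_n(pl-n)$ and the displayed $g_{n,\bfj}(pl)$ in \eqref{EqCorSqFree} is a notational inconsistency in the paper itself, not a gap in your reasoning.)
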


Here the set $\cN\subseteq[N]$ is chosen as in \eqref{EqcN}.
\begin{proof}The corollary directly follows from the lemma above and the inequality  \eqref{EqcN}.\end{proof}

Take $P_1=2^{s_-}$ and $Q_1=2^{s_+}$ for integers $s_-<s_+$. The expression \eqref{EqCorSqFree} splits into the sum \begin{equation}\label{EqMinorSDya}\sum_{s\in(s_-,s_+]}\sum_{n\in\cN}\sum_{\bfj\in\cJ}\sum_{ p\in(2^{s-1},2^s]}\sum_{l\in\bN}\frac{1_{pl\in n+\cI_{n,\bfj}}\beta(p)\beta(l)}{1+\#\{q\in\cP: q|l\}}\tF_{n,\bfj}(g_{\bfj}(n)\Gamma_{n,\bfj}) ,\end{equation} over all integers $s\in [s_-,s_+]$.

\begin{notation} Here and below, the letter $p$, as well as $p_1$, $p_2$, will always refer to prime numbers only.\end{notation}

Observe that, for all given $s$,
\begin{equation}\label{EqMinorS1}\begin{aligned}
&\bigg|\sum_{n\in\cN}\sum_{\bfj\in\cJ}\sum_{ p\in(2^{s-1},2^s]}\sum_{l\in\bN}\frac{1_{pl\in n+\cI_{n,\bfj}}\beta(p)\beta(l)}{1+\#\{q\in\cP: q|l\}}\tF_{n,\bfj}(g_{n,\bfj}(pl)\Gamma_{n,\bfj})\bigg|\\
\leq&\sum_{l\in\bN}\frac{|\beta(l)|}{1+\#\{q\in\cP: q|l\}}\bigg|\sum_{n\in\cN}\sum_{\bfj\in\cJ}\sum_{\substack{ p\in(2^{s-1},2^s]\\pl\in n+\cI_{n,\bfj}}}\beta(p)\tF_{n,\bfj}(g_{n,\bfj}(pl)\Gamma_{n,\bfj})\bigg|\\
\leq&\sum_{l\leq\frac{N+H}{2^{s-1}}}\bigg|\sum_{n\in\cN}\sum_{\bfj\in\cJ}\sum_{\substack{ p\in(2^{s-1},2^s]\\pl\in n+\cI_{n,\bfj}}}\beta(p)\tF_{n,\bfj}(g_{n,\bfj}(pl)\Gamma_{n,\bfj})\bigg|\\
\ll&2^{-\frac s2}N^\frac12\bigg(\sum_{l\leq \frac{N+H}{2^{s-1}}}\bigg|\sum_{n\in\cN}\sum_{\bfj\in\cJ}\sum_{\substack{ p\in(2^{s-1},2^s]\\pl\in n+\cI_{n,\bfj}}}\beta(p)\tF_{n,\bfj}(g_{n,\bfj}(pl)\Gamma_{n,\bfj})\bigg|^2\bigg)^\frac12.
\end{aligned}\end{equation}
This is  because if $\bfj=(k,j)$ and $pl\in n+\cI_{n,\bfj}$, then $2^{s-1}l\leq pl\leq N+H$.

%Note that if $\omega_n=t$, then $W= W $ and $q\in(2^{t-1}, W ]$. Thus $\#\cJ=2^{t}q\in(2^{2t-1},  W^2]$. Hence\begin{equation}\label{EqEnsembleSize}\sum_{n\leq N}1\di\bfn \leq  W^2N.\end{equation}
For a configuration $\bfn=(n,\bfj)=(n,k,j)\in\cN\times\cJ$, define an arithmetic progression
\begin{equation}\label{EqIntersection1}\cA_{\bfn,p}=\{l\in\bN: pl\in n+\cI_{n,\bfj}\}=\{l\in\bN: pl-n\in I_{k}, pl\equiv {j} (\mod q)\}\end{equation}

For two such given configurations $$\bfn_1=(n_1,\bfj_1)=(n_1, k_1, j_1),\bfn_2=(n_2,\bfj_2)=(n_2,k_2,j_2)\in\cN\times\cJ,$$ write \begin{equation}\label{EqIntersection}\begin{aligned}\cA_{\bfn_1,\bfn_2,p_1,p_2}=\cA_{\bfn_1,p_1}\cap\cA_{\bfn_2,p_2}.\end{aligned}\end{equation} Then
\begin{equation}\label{EqMinorS2}\begin{aligned}
&\sum_{l\leq \frac{N+H}{2^{s-1}}}\bigg|\sum_{n\in\cN}\sum_{\bfj\in\cJ}\sum_{\substack{ p\in(2^{s-1},2^s]\\pl\in n+\cI_{n,\bfj}}}\beta(p)\tF_{n,\bfj}(g_{n,\bfj}(pl)\Gamma_{n,\bfj}) \bigg|^2\\
=&\sum_{\bfn_1,\bfn_2\in\cN\times\cJ}\sum_{p_1,p_2\in(2^{s-1},2^s]}\sum_{l\in\cA_{\bfn_1,\bfn_2,p_1,p_2}}\beta(p_1)\overline{\beta(p_2)}\\
&\hskip2cm \tF_{\bfn_1}(g_{\bfn_1}(p_1l)\Gamma_{\bfn_1})\overline{\tF_{\bfn_2}(g_{\bfn_2}(p_2l)\Gamma_{\bfn_2})}
\end{aligned}\end{equation}

It will be useful to have an upper bound on the size of $\cA_{\bfn_1,\bfn_2,p_1,p_2}$.

\begin{lemma}\label{LemIntersectionSize}If $p_1>W$, then $\#\cA_{\bfn_1,\bfn_2,p_1,p_2}\ll p_1^{-1}W^{-3}H$.\end{lemma}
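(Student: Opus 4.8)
The plan is to reduce the count to that of $\cA_{\bfn_1,p_1}$ alone, since $\cA_{\bfn_1,\bfn_2,p_1,p_2}=\cA_{\bfn_1,p_1}\cap\cA_{\bfn_2,p_2}\subseteq\cA_{\bfn_1,p_1}$ by \eqref{EqIntersection}; so it suffices to show $\#\cA_{\bfn_1,p_1}\ll p_1^{-1}W^{-3}H$. I would then unpack the definition \eqref{EqIntersection1}: an integer $l$ lies in $\cA_{\bfn_1,p_1}$ exactly when $p_1l-n_1\in I_{k_1}$ and $p_1l\equiv j_1\pmod q$.

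The crucial point I would emphasize is the coprimality $\gcd(p_1,q)=1$, which is precisely where the hypothesis $p_1>W$ enters: $p_1$ is prime and $p_1>W\ge q$ (recall $q\in(\tfrac W2,W]$ from Notation \ref{NotationBilinear}), so $p_1\nmid q$. Hence $p_1$ is invertible modulo $q$, and the congruence $p_1l\equiv j_1\pmod q$ pins $l$ down to a single residue class modulo $q$; meanwhile, since $I_{k_1}$ has length $W^{-2}H$, the condition $p_1l-n_1\in I_{k_1}$ confines $l$ to an interval of length $W^{-2}H/p_1$. Counting integers in an interval of length $W^{-2}H/p_1$ that lie in one residue class modulo $q$ then gives $\#\cA_{\bfn_1,p_1}\le W^{-2}H/(p_1q)+1$, and using $q>\tfrac W2$ the main term is $\le 2p_1^{-1}W^{-3}H$.

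The only point needing care — and the closest thing here to an obstacle — is absorbing the additive constant $1$: this is harmless in the parameter range at hand, since $p_1\le Q_1$ and (as with the major-arc constraint $Q_1\le W^{-4}H$ in \eqref{EqMajorDensePara}) $Q_1$ will be taken small enough that $W^{-3}H\gg p_1$, so that $1\ll p_1^{-1}W^{-3}H$. Putting these together yields $\#\cA_{\bfn_1,\bfn_2,p_1,p_2}\le\#\cA_{\bfn_1,p_1}\ll p_1^{-1}W^{-3}H$. Apart from this bookkeeping, the argument is a one-line lattice-point count; the content is entirely in the coprimality observation, which is what lets the modulus $q\asymp W$ save a full power of $W$ over the trivial bound $\#\cA_{\bfn_1,p_1}\le\#\cI_{n_1,\bfj_1}\ll W^{-3}H$.
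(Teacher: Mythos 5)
Your argument is correct and is essentially the paper's own proof: the paper likewise observes that $p_1>W\ge q$ forces $\gcd(p_1,q)=1$, bounds $\#\cA_{\bfn_1,p_1}\le q^{-1}p_1^{-1}|I_{k_1}|\le 2p_1^{-1}W^{-3}H$, and then uses $\cA_{\bfn_1,\bfn_2,p_1,p_2}\subseteq\cA_{\bfn_1,p_1}$. Your extra care in absorbing the additive $+1$ via the parameter range is a point the paper's proof silently elides, but it does not change the argument.
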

\begin{proof}For a prime $p>W$, $p$ is coprime to $q\in (\frac W2, W]$. The arithmetic progression $\cA_{\bfn,p}$ from \eqref{EqIntersection1} is bounded in length by \begin{equation}\label{EqLemIntersectionSize}\#\cA_{\bfn,p}\leq q^{-1}p^{-1}|I_{k}|\leq 2p^{-1}W^{-1}W^{-2}H=2p^{-1}W^{-3}H.\end{equation}

The lemma follows because $\cA_{\bfn_1,\bfn_2,p_1,p_2}=\cA_{\bfn_1,p_1}\cap\cA_{\bfn_2,p_2}$.\end{proof}

We remark that, on the other hand, if $H\geq 4pW^3$, then we also have \begin{equation}\label{EqLemIntersectionSize2}\#\cA_{\bfn,p}\geq q^{-1}(p^{-1}|I_{k}|-1)-1\geq \frac12 q^{-1}p^{-1}|I_{k}|\geq \frac12p^{-1}W^{-3}H.\end{equation}

We first take the sum when the length of $\cA_{\bfn_1,\bfn_2,p_1,p_2}$ is bounded by $2^{-s}W^{-(\ref{BONSTShortSeq}+3)}H$ where $\lBONST{BONSTShortSeq}\geq 10$ and will be determined later. This part of \eqref{EqMinorS2} is easily bounded as below.
\begin{proposition}\label{PropMinorShort}For $\ref{BONSTShortSeq}\geq 10$, the expression\begin{equation}\label{EqPropMinorShort}\begin{aligned}
&\sum_{\bfn_1,\bfn_2\in\cN\times\cJ}\sum_{\substack{p_1,p_2\in(2^{s-1},2^s]\\ \#\cA_{\bfn_1,\bfn_2,p_1,p_2}<2^{-s}W^{-(\ref{BONSTShortSeq}+3)}H}}\sum_{l\in\cA_{\bfn_1,\bfn_2,p_1,p_2}}\beta(p_1)\overline{\beta(p_2)}\\
&\hskip2cm \tF_{\bfn_1}(g_{\bfn_1}(p_1l)\Gamma_{\bfn_1})\overline{\tF_{\bfn_2}(g_{\bfn_2}(p_2l)\Gamma_{\bfn_2}))}\end{aligned}\end{equation}
satisfies $|\eqref{EqPropMinorShort}|\ll 2^sW^{-\ref{BONSTShortSeq}}H^2 N.$\end{proposition}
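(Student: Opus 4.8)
The plan is to bound \eqref{EqPropMinorShort} trivially, by the number of quintuples $(\bfn_1,\bfn_2,p_1,p_2,l)$ contributing to it. Write $K:=2^{-s}W^{-(\ref{BONSTShortSeq}+3)}H$. Since $|\beta(p_1)|,|\beta(p_2)|\le 1$ and $\|\tF_{\bfn_1}\|_{C^0},\|\tF_{\bfn_2}\|_{C^0}\le 2$ by \eqref{EqMinorTermC0Bd},
$$|\eqref{EqPropMinorShort}|\ \le\ 4\,\#\big\{(\bfn_1,\bfn_2,p_1,p_2,l)\ :\ l\in\cA_{\bfn_1,\bfn_2,p_1,p_2},\ \#\cA_{\bfn_1,\bfn_2,p_1,p_2}<K\big\},$$
so it is enough to bound this count by $\ll 2^sW^{-\ref{BONSTShortSeq}}H^2N$. (If $K<1$ the constraint $\#\cA_{\bfn_1,\bfn_2,p_1,p_2}<K$ forces $\cA_{\bfn_1,\bfn_2,p_1,p_2}=\emptyset$, so one may assume $K\ge1$.) I would parametrize a contributing quintuple by the data $(p_1,p_2,l,h_1,h_2)$, with $p_1,p_2\in(2^{s-1},2^s]$, $l\ge1$, $h_1,h_2\in[H]$: then $m_i:=p_il$, $n_i:=m_i-h_i$, $k_i:=$ the index with $h_i\in I_{k_i}$, and $j_i\equiv m_i\pmod q$ determine $\bfn_i=(n_i,\bfj_i)$, the conditions $l\in\cA_{\bfn_i,p_i}$ being automatic, and conversely $n_i=m_i-h_i$ with $n_i\in[N]$ (so $m_i\le N+H$) recovers all the data.

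The decisive step — which I expect to be the only real difficulty — is the assertion that \emph{$\#\cA_{\bfn_1,\bfn_2,p_1,p_2}<K$ forces $h_1$ to lie within $2W^{-(\ref{BONSTShortSeq}+2)}H$ of an endpoint of its block $I_{k_1}$.} To prove it, observe that $\cA_{\bfn_i,p_i}$ is the set of integers in a single residue class modulo $q$ that lie in the real interval $J_i:=\{x:p_ix\in n_i+I_{k_i}\}$ of length $|I_{k_i}|/p_i=W^{-2}H/p_i$; hence (assuming the congruences $p_1l\equiv j_1$, $p_2l\equiv j_2\pmod q$ are compatible, otherwise $\cA_{\bfn_1,\bfn_2,p_1,p_2}$ is empty) $\cA_{\bfn_1,\bfn_2,p_1,p_2}$ is a residue class modulo $q$ intersected with the interval $J_1\cap J_2$. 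A residue class mod $q$ meets an interval of length $\ell$ in at least $\ell/q-1$ integers, so $\#\cA_{\bfn_1,\bfn_2,p_1,p_2}<K$ gives $|J_1\cap J_2|<q(K+1)\le 2qK$. Since $l\in J_1\cap J_2$, it lies within $2qK$ of each endpoint of $J_1\cap J_2$, hence within $2qK$ of some endpoint $\alpha$ of $J_1$ or $J_2$ and within $2qK$ of some endpoint $\beta$ of $J_1$ or $J_2$. Were $\alpha,\beta$ the two endpoints of one of the $J_i$, we would get $W^{-2}H/2^s\le|J_i|<4qK\le 4\cdot2^{-s}W^{-(\ref{BONSTShortSeq}+2)}H$ (using $q\le W$, $p_i\le2^s$), i.e.\ $W^{\ref{BONSTShortSeq}}<4$, absurd for $\ref{BONSTShortSeq}\ge10$, $W\ge10$. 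Hence $l$ is within $2qK$ of an endpoint of $J_1$; writing $I_{k_1}=(a,b]$, the endpoints of $J_1$ are $(n_1+a)/p_1$, $(n_1+b)/p_1$ and $l=(n_1+h_1)/p_1$, so this is precisely the statement that $h_1$ is within $2qKp_1\le2W^{-(\ref{BONSTShortSeq}+2)}H$ of $\{a,b\}=\partial I_{k_1}$.

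Finally I would count: there are $\ll\sum_{2^{s-1}<p\le2^s}(N+H)/p\ll N$ pairs $(p_1,l)$ with $p_1l\le N+H$ (by Mertens and the standing $H\ll N$), $\ll 2^s$ primes $p_2\in(2^{s-1},2^s]$, $H$ values of $h_2$, and — because $h_1$ determines its block and each of the $W^2$ blocks contains only $\ll W^{-(\ref{BONSTShortSeq}+2)}H$ integers within $2W^{-(\ref{BONSTShortSeq}+2)}H$ of its boundary — at most $\ll W^{-\ref{BONSTShortSeq}}H$ admissible $h_1$. The product of these bounds is $\ll N\cdot2^s\cdot W^{-\ref{BONSTShortSeq}}H\cdot H=2^sW^{-\ref{BONSTShortSeq}}H^2N$, which completes the argument. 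Everything except the displayed assertion about $h_1$ is routine bookkeeping; the content of that assertion is that a short overlap of the two progressions $\cA_{\bfn_1,p_1}$ and $\cA_{\bfn_2,p_2}$ can only occur when $h_1$ sits near the edge of its block, and this is where the gain of $W^{-\ref{BONSTShortSeq}}$ over the trivial bound $2^sH^2N$ comes from.
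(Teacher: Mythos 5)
Your argument is correct, but it takes a genuinely different route from the paper's, whose proof is essentially three lines: the inner sum over $l$ is bounded trivially by its length $\#\cA_{\bfn_1,\bfn_2,p_1,p_2}<2^{-s}W^{-(\ref{BONSTShortSeq}+3)}H$ times the sup bound $4$ on the summand, and this is multiplied by the number of quadruples $(\bfn_1,\bfn_2,p_1,p_2)$ with $\cA_{\bfn_1,\bfn_2,p_1,p_2}\neq\emptyset$, which is $\ll W^3N\cdot 2^{2s}\cdot H$ by Lemma \ref{LemIntersect}; the product is exactly $2^sW^{-\ref{BONSTShortSeq}}H^2N$. So in the paper the shortness hypothesis is exploited only to truncate the $l$-sum, while the quadruple count is the unrestricted one. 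You instead count quintuples via the parametrization $(p_1,p_2,l,h_1,h_2)$ and extract the gain $W^{-\ref{BONSTShortSeq}}$ from a structural consequence of the shortness hypothesis: a short overlap of the two progressions forces the real intervals $J_1,J_2$ to barely overlap, which pins $l$, hence $h_1$, within $2W^{-(\ref{BONSTShortSeq}+2)}H$ of an endpoint of its block. That deduction is correct (your case analysis ruling out that both near endpoints belong to the same $J_i$ is the right way to see it), and the subsequent bookkeeping checks out, so the proof is valid — just more elaborate than the one in the paper. Two minor points: your description of $\cA_{\bfn,p}$ as a single residue class mod $q$ inside $J$ needs $\gcd(p,q)=1$, and the standing conditions only give $p>2^{s-1}\geq W/2\geq q/2$ rather than $p>W$; this is the same borderline issue the paper itself glosses over in Lemma \ref{LemIntersectionSize}. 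Also, your count of the pairs $(p_1,l)$ uses $H\ll N$, which is indeed among the paper's standing assumptions.
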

\begin{proof}
$$\begin{aligned}
|\eqref{EqPropMinorShort}|\leq &\bigg|\sum_{\bfn_1,\bfn_2\in\cN\times\cJ}\sum_{\substack{p_1,p_2\in(2^{s-1},2^s]\\ \#\cA_{\bfn_1,\bfn_2,p_1,p_2}<2^{-s}W^{-(\ref{BONSTShortSeq}+3)}H}}2^{-s}W^{-(\ref{BONSTShortSeq}+3)}H\bigg|\\
\leq&2^{-s}W^{-(\ref{BONSTShortSeq}+3)}H\sum_{p_1,p_2\in(2^{s-1},2^s]}\sum_{\bfn_1,\bfn_2\in\cN\times\cJ}1_{\cA_{\bfn_1,\bfn_2,p_1,p_2}\neq\emptyset}
\\
\ll& 2^{-s}W^{-(\ref{BONSTShortSeq}+3)}H\cdot 2^{2s}\cdot  W^3 N \cdot  H=2^sW^{-\ref{BONSTShortSeq}}H^2 N
\end{aligned}$$
\end{proof}

Here the last inequality follows from \eqref{EqEnsembleSize} and the lemma below.
\begin{lemma}\label{LemIntersect}If $2^s\geq W\geq 10$, then for all $\bfn_1\in\cN\times\cJ$ and $p_1,p_2\in(2^{s-1},2^s]$,
$$\#\{\bfn_2\in\cN\times\cJ:\ \cA_{\bfn_1,\bfn_2,p_1,p_2}\neq\emptyset\}\ll  H.$$
\end{lemma}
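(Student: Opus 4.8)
The plan is to dispose first of the trivial case: if $\cA_{\bfn_1,p_1}=\emptyset$ then $\cA_{\bfn_1,\bfn_2,p_1,p_2}=\emptyset$ for every $\bfn_2$ and there is nothing to prove, so assume $\cA_{\bfn_1,p_1}\neq\emptyset$. The key observation is that, once $\bfn_1=(n_1,k_1,j_1)$ and $p_1$ are fixed, the set $\cA_{\bfn_1,p_1}$ is confined to a very short window in $l$: every $l\in\cA_{\bfn_1,p_1}$ satisfies $p_1l\in n_1+I_{k_1}$, an interval of length $W^{-2}H$, so $\cA_{\bfn_1,p_1}$ is contained in an interval $\cL_1\subset\bR$ of length $W^{-2}H/p_1$. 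Since $p_1>2^{s-1}$ and $p_2\le 2^s$ we have $p_2/p_1<2$, hence the dilate $J:=p_2\cL_1$ is an interval of length $<2W^{-2}H$ and $p_2\cdot\cA_{\bfn_1,p_1}\subseteq J$. This interval $J$, which depends only on $(\bfn_1,p_1,p_2)$, is what will pin down all admissible $\bfn_2$.

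Next I would count the triples $\bfn_2=(n_2,k_2,j_2)\in\cN\times\cJ$ with $\cA_{\bfn_1,\bfn_2,p_1,p_2}\neq\emptyset$ one coordinate at a time. Fix such a $\bfn_2$ together with a witness $l\in\cA_{\bfn_1,p_1}\cap\cA_{\bfn_2,p_2}$. From $l\in\cA_{\bfn_1,p_1}$ we get $p_1l\equiv j_1\ (\mod q)$; as $p_1$ is invertible modulo $q$, this forces $l\equiv p_1^{-1}j_1$ and therefore $j_2\equiv p_2l\equiv p_2p_1^{-1}j_1\ (\mod q)$, a single admissible residue — so $j_2$ is completely determined by $(\bfn_1,p_1,p_2)$. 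For $n_2$: we have $p_2l\in J$ while $p_2l-n_2\in I_{k_2}\subseteq(0,H]$, so $n_2$ lies in an interval of length $|J|+H<2W^{-2}H+H<2H$, giving at most $2H$ values. Finally, once $n_2$ is fixed, as $l$ ranges over $\cA_{\bfn_1,p_1}$ the quantity $p_2l-n_2$ stays in an interval of length $|J|<2W^{-2}H$, hence meets at most $3$ of the equal-length blocks $I_1,\dots,I_{W^2}$, so there are at most $3$ possible $k_2$. Multiplying the three counts gives $\#\{\bfn_2:\cA_{\bfn_1,\bfn_2,p_1,p_2}\neq\emptyset\}\le 2H\cdot3\cdot1\ll H$.

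The only delicate point is the coprimality $\gcd(p_1,q)=1$ used to identify $j_2$: were it to fail — which for a prime $p_1$ means $p_1\mid q$ — the residue $j_2$ could take up to $p_1$ values and the bound would degrade to $O(HW)$, which is useless here. However this cannot occur in the regime in which the lemma is applied: in \eqref{EqMinorSDya} one has $2^{s-1}\ge 2^{s_-}=P_1>W\ge q$ (recall $\cP$ consists of primes in $[P_1,Q_1]$ with $P_1>W$ in Lemma \ref{LemSqFree}), so $p_1>2^{s-1}\ge q$ and a prime $p_1>q>0$ is automatically coprime to $q$; only the coprimality of $p_1$, not $p_2$, is actually needed. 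I expect the remaining interval bookkeeping — the disjointness of the $I_k$, the passage from $\cL_1$ to $J$, and the elementary bound on how many consecutive blocks a short interval can meet — to be entirely routine; the conceptual content is just the observation that fixing $(\bfn_1,p_1,p_2)$ forces $j_2$ and squeezes $n_2$ into a window of length $O(H)$.
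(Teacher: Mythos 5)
Your proof is correct and follows essentially the same counting as the paper's: the paper fixes $k_2$ first and confines $n_2$ to an interval of length $\le 3W^{-2}H$ for each of the $W^2$ blocks, while you confine $n_2$ to a window of length $O(H)$ and then observe that at most $3$ blocks $I_{k_2}$ can be met; both arguments use that $j_2$ is forced by the congruence condition. Your explicit remark that $\gcd(p_1,q)=1$ is needed (and holds in the regime where the lemma is applied, since $p_1>2^{s-1}\ge P_1>W\ge q$) is a point the paper's proof glosses over when it asserts that the residue of $l$ modulo $q$ is determined by $\bfn_1$ and $p_1$.
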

\begin{proof}Notice that if in $\bfn_2=(n_2, k_2, j_2)$, $k_2$ is given, then $\cA_{\bfn_1,\bfn_2,p_1,p_2}\neq\emptyset$ implies $(\frac{n_1}{p_1}+\frac{1}{p_1}I_{k_1})\cap(\frac{n_2}{p_2}+\frac{1}{p_2}I_{k_2})\neq \emptyset$. This is true only if $n_2$ belongs to an interval whose length is at most $$\frac{p_2}{p_1}|I_{k_1}|+|I_{k_2}|\leq 2W^{-2}H+W^{-2}H=3 W^{-2}H.$$

Moreover, the congruence class of elements in $\cA_{\bfn_1,p_1}$ modulo $q$ is determined by $\bfn_1$ and $p_1$. This congruence class, together with $n_2$ and $p_2$, in turn determines a unique choice of the remainder $j_2$ modulo $q$ in order for $\cA_{\bfn_1,\bfn_2,p_1,p_2}=\cA_{\bfn_1,p_1}\cap \cA_{\bfn_2,p_2}$.

Therefore, $ \sum_{\bfn_2\in\cN\times\cJ}1_{\cA_{\bfn_1,\bfn_2,p_1,p_2}\neq\emptyset}\ll\sum_{k_2\leq W^2} W^{-2}H= H.$
\end{proof}

We now focus on intersections with $\#\cA_{\bfn_1,\bfn_2,p_1,p_2}\geq 2^{-s}W^{-(\ref{BONSTShortSeq}+3)} H$.
\begin{definition}\label{DefPropJoining}For $s\in[s_-,s_+]$, $\bfn_1\in\cN\times\cJ$, prime number $p_1\in(2^{s-1},2^s]$ and a parameter $\ref{BONSTShortSeq}\geq 10$, the set $\Omega_{s,\bfn_1,p_1,\ref{BONSTShortSeq}}$ is defined to be the set of all configurations $(\bfn_2,p_2)\in\cN\times\cJ\times(2^{s-1},2^s]$ such that: \begin{enumerate}[(i)]
\item $p_2$ is prime;
\item\label{ItemPropJoining2} $\#\cA_{\bfn_1,\bfn_2,p_1,p_2}\geq 2^{-s}W^{-(\ref{BONSTShortSeq}+3)} H$;
\item\label{ItemPropJoining3} $$\begin{aligned}\Big|\sum_{l\in\cA_{\bfn_1,\bfn_2,p_1,p_2}}\tF_{\bfn_1}(g_{\bfn_1}(p_1l-n_1)\Gamma_{\bfn_1})\overline{\tF_{\bfn_2}(g_{\bfn_2}(p_2l-n_1)\Gamma_{\bfn_2})}
    \Big|\\
    \geq W^{-\ref{BONSTShortSeq}}\#\cA_{\bfn_1,\bfn_2,p_1,p_2}.\end{aligned}$$
\end{enumerate}
\end{definition}

\begin{proposition}\label{PropJoining}One can choose the constant $\ref{CONSTImplicit}=O(1)\geq 10$ to be sufficiently large, such that: if \begin{equation}\label{EqPropJoiningCond}W\geq  10, \ref{BONSTShortSeq}\geq 10, \ref{BONSTTotalED}\geq \ref{CONSTImplicit}\ref{BONSTShortSeq}, H\geq \max(W^{\ref{BONSTTotalED}},2^{10s}),\end{equation} then for all pairs $(\bfn_1,p_1)$, where $\bfn_1\subset\cN\times\cJ$ and $p_1\in(2^{s-1},2^s]$, $$\#\Omega_{s,\bfn_1,p_1,\ref{BONSTShortSeq}}< 2^sW^{-\ref{BONSTShortSeq}}H.$$\end{proposition}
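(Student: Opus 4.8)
The plan is a quantitative joining argument, in the spirit of \cite{GT12b}, comparing the two short orbits inside a product nilmanifold. Fix $\bfn_1\in\cN\times\cJ$ and a prime $p_1\in(2^{s-1},2^s]$, and let $(\bfn_2,p_2)\in\Omega_{s,\bfn_1,p_1,\ref{BONSTShortSeq}}$. Form $\hat G=G_{\bfn_1}\times G_{\bfn_2}$, $\hat\Gamma=\Gamma_{\bfn_1}\times\Gamma_{\bfn_2}$, with the product filtration and the concatenation of the Mal'cev bases $\cV_{\bfn_1},\cV_{\bfn_2}$; by Lemma \ref{LemPeriodicLattice} this is $W^{O(1)}$-rational, and since the data $(G_{\bfn},\Gamma_{\bfn},\gamma_{\bfn})$ is $q$-periodic in $n$ and indexed by $\cJ$, only $W^{O(1)}$ distinct such products occur. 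Consider $\hat g\in\Poly(\bZ,\hat G_\bullet)$, $\hat g(l)=\big(g_{\bfn_1}(p_1l-n_1),\,g_{\bfn_2}(p_2l-n_2)\big)$, restricted to $\cA:=\cA_{\bfn_1,\bfn_2,p_1,p_2}$, a progression of common difference $q$. By condition \ref{ItemPropJoining2} of Definition \ref{DefPropJoining}, $\#\cA\geq 2^{-s}W^{-(\ref{BONSTShortSeq}+3)}H$, which in view of $H\geq\max(W^{\ref{BONSTTotalED}},2^{10s})$ and $\ref{BONSTTotalED}\geq\ref{CONSTImplicit}\ref{BONSTShortSeq}$ is at least a fixed large power of $W$ once $\ref{CONSTImplicit}$ is large; after reparametrizing $\cA$ as an interval (using Lemma \ref{LemEDTotalED}) we may treat $\hat g$ as a polynomial on $[\#\cA]$. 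The tensor $\Phi=\tF_{\bfn_1}\otimes\overline{\tF_{\bfn_2}}$ has zero mean on $\hat G/\hat\Gamma$ (each factor does) and $\|\Phi\|\ll W^{O(1)}$ by \eqref{EqMinorTermBd}, so condition \ref{ItemPropJoining3} of Definition \ref{DefPropJoining} forces $\{\hat g(l)\hat\Gamma\}_{l\in\cA}$ to fail $W^{-\ref{BONSTShortSeq}-O(1)}$-equidistribution in $\hat G/\hat\Gamma$. By Corollary \ref{CorLeibman} there is a nontrivial horizontal character $\Theta$ of $\hat G/\hat\Gamma$ with $|\Theta|\leq W^{O(\ref{BONSTShortSeq})}$ and $\|\Theta\circ\hat g\|_{C^\infty([\#\cA])}\leq W^{O(\ref{BONSTShortSeq})}$; writing $\Theta(x_1,x_2)=\eta_1(x_1)+\eta_2(x_2)$, the $\eta_i$ are horizontal characters of $G_{\bfn_i}/\Gamma_{\bfn_i}$ of modulus $\leq W^{O(\ref{BONSTShortSeq})}$, not both trivial.

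The second step is to arrange that $\eta_2\neq0$. If $\eta_2=0$ then $\eta_1\neq0$ and the first-factor orbit $\{g_{\bfn_1}(p_1l-n_1)\}$ already fails to equidistribute; although one cannot transplant the equidistribution of $\{g_{\bfn_1}(h)\}_{h\in[H]}$ from Lemma \ref{LemGenericED} directly (the progression $\{p_1l-n_1:l\in\cA\}$ is short relative to $[H]$), one can instead factor $g_{\bfn_1}(p_1\cdot-n_1)$ via Theorem \ref{ThmFactorization}, decompose $\tF_{\bfn_1}$ along the fibres over the resulting proper rational subgroup $G''\subsetneq G_{\bfn_1}$, and split off its average $c$ over $G''$. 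The term carrying $c$ correlates with $\tF_{\bfn_2}$, which has zero mean, hence is negligible unless $\{g_{\bfn_2}(p_2\cdot-n_2)\}$ itself fails to equidistribute — which again produces a nontrivial character on the second factor — while the remaining term correlates a zero-mean function on $G''$ with $\tF_{\bfn_2}$, and one re-runs the argument on the smaller product $G''\times G_{\bfn_2}$. Since the dimension strictly drops this terminates, and one ends up with a nontrivial horizontal character $\eta_2$ of $G_{\bfn_2}/\Gamma_{\bfn_2}$ of modulus $\leq W^{O(\ref{BONSTShortSeq})}$ together with a fixed polynomial $P(l)$ (depending only on $\bfn_1,p_1$ and the characters peeled off) such that $\|\eta_2\circ g_{\bfn_2}(p_2\cdot-n_2)-P\|_{C^\infty([\#\cA])}\leq W^{O(\ref{BONSTShortSeq})}$. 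Throughout, $\ref{BONSTTotalED}\geq\ref{CONSTImplicit}\ref{BONSTShortSeq}$ with $\ref{CONSTImplicit}$ large keeps the accumulated losses well below the equidistribution scale $W^{-\ref{CONSTImplicit}^{-1}\ref{BONSTTotalED}}$ of Lemma \ref{LemGenericED}, so Lemma \ref{LemEtaTotalED} applies at each stage.

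The third step converts this resonance into a Diophantine condition on $(\bfn_2,p_2)$ and counts. Conjugating by $\gamma_{\bfn_2}$, one has $\eta_2\circ g_{\bfn_2}=\tilde\eta_2\circ g'(n_2,\cdot)$ for a bounded multiple $\tilde\eta_2$ of a horizontal character of $G'/\Gamma'$ of modulus $\leq W^{O(\ref{BONSTShortSeq})}$ with respect to $\cV'$ (cf. the change-of-lattice argument in Corollary \ref{CorChangeLattice}), and substituting $h=p_2l-n_2$ into $\psi_{\cV'}(g'(n,h))=\sum_{j+k\leq d}\omega'_{jk}\binom nj\binom hk$ (Lemma \ref{LemPolyCoeff}) shows that each coefficient of $\eta_2\circ g_{\bfn_2}(p_2l-n_2)$ as a polynomial in $l$ is an explicit polynomial in $(n_2,p_2)$ whose coefficients lie among $\{\tilde\eta_2(\omega'_{jk})\}$. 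Since $\eta_2$ is nontrivial while $\{g_{\bfn_2}(h)\}_{h\in[H]}$ equidistributes, $\tilde\eta_2$ must see the $h$-dependence of $g'$, so this $l$-polynomial has positive degree, and (away from at most $O(1)$ exceptional values of $n_2$) its leading coefficient has the form $\tilde\eta_2(\omega'_{jk})p_2^{k}/k!+(\text{lower order in }p_2)$ for some $k\geq1$ with $\tilde\eta_2(\omega'_{jk})\neq0$; by Lemma \ref{LemCoeffBound} applied to the $C^\infty$-bound it must lie within $W^{O(\ref{BONSTShortSeq})}\#\cA^{-k}$ of a point of $\bR/\bZ$ prescribed by $\bfn_1,p_1,\eta_1$. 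As $\#\cA\geq 2^{-s}W^{-(\ref{BONSTShortSeq}+3)}H$ and $H\geq W^{\ref{BONSTTotalED}}$, this window has size $\leq W^{-c\ref{BONSTTotalED}}$ for some $c=c(m,d)>0$; and, again by Lemma \ref{LemGenericED}, $\tilde\eta_2(\omega'_{jk})$ has no rational approximation of height $\leq W^{c'\ref{BONSTTotalED}}$ closer than $W^{-c'\ref{BONSTTotalED}}$, so a Weyl-type estimate for $p_2\mapsto\tilde\eta_2(\omega'_{jk})p_2^{k}/k!$ (dropping primality of $p_2$ if convenient) confines $p_2$, and hence $(\bfn_2,p_2)$, to $\ll 2^sW^{-c''\ref{BONSTTotalED}}\cdot H$ configurations for each fixed choice of $\eta_1,\eta_2$ and of $(\gamma_{\bfn_2},G_{\bfn_2},\Gamma_{\bfn_2})$, the factor $H$ coming from Lemma \ref{LemIntersect}. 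Summing over the $W^{O(\ref{BONSTShortSeq})}$ admissible characters and $W^{O(1)}$ admissible tuples gives $\#\Omega_{s,\bfn_1,p_1,\ref{BONSTShortSeq}}\ll 2^sHW^{O(\ref{BONSTShortSeq})-c''\ref{BONSTTotalED}}<2^sW^{-\ref{BONSTShortSeq}}H$, the last inequality holding because $\ref{BONSTTotalED}\geq\ref{CONSTImplicit}\ref{BONSTShortSeq}$ with $\ref{CONSTImplicit}$ large.

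The hardest part is the second step: guaranteeing, through the iterated product-nilmanifold factorizations, that one eventually produces a horizontal character genuinely detecting the $G_{\bfn_2}$-orbit, while tracking how the Lipschitz norms, the subgroups, and the error terms degrade — this is where the gap between $\ref{BONSTTotalED}$ and $\ref{BONSTShortSeq}$ (encoded in $\ref{BONSTTotalED}\geq\ref{CONSTImplicit}\ref{BONSTShortSeq}$, $\ref{CONSTImplicit}$ large) and the constraint $H\geq\max(W^{\ref{BONSTTotalED}},2^{10s})$ enter essentially. A related subtlety is that, unlike in \cite{GT12b}, the short orbits live in a whole $W^{O(1)}$-element family of nilmanifolds $G_{\bfn}/\Gamma_{\bfn}$ and the relevant progression $\cA_{\bfn_1,\bfn_2,p_1,p_2}$ is short compared with $[H]$, forcing one to exploit the common coefficient description of the single polynomial $g'$ in place of equidistribution transplanted from $[H]$; it is precisely this "correlation among equidistribution patterns" furnished by Theorem \ref{ThmFactorization} that makes the counting in the third step possible.
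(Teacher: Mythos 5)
Your opening move coincides with the paper's: pass to the product nilmanifold $(G_{\bfn_1}/\Gamma_{\bfn_1})\times(G_{\bfn_2}/\Gamma_{\bfn_2})$, observe that condition \eqref{ItemPropJoining3} forces the joint polynomial orbit along $\cA_{\bfn_1,\bfn_2,p_1,p_2}$ to fail quantitative equidistribution, and extract a horizontal character $\eta=\eta_{(1)}\oplus\eta_{(2)}$ of controlled modulus with $\|\eta\circ\tg_{\bfn_2,p_2}\|_{C^\infty}$ small. After that the proposal diverges from the paper and, as written, has two genuine gaps. First, your second step (the iterated factorization of $g_{\bfn_1}(p_1\cdot-n_1)$, peeling off averages over successively smaller subgroups $G''$ to force $\eta_2\neq0$) is not an argument but a programme: it is never specified which zero-mean function on which quotient is being correlated at each stage, how the resulting characters recombine into a single character with a single $C^\infty$ bound, or why the losses close up. The paper does nothing of this sort; its mechanism for exploiting the hypothesis $\bfn_2\in\cN$ is entirely different and lives in the third step.

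Second, and more seriously, the counting step is asserted rather than proved. The paper's actual engine is: pigeonhole the (at most $W^{O(\ref{BONSTShortSeq})}$ many) characters and the ($q$-periodic) data $(\gamma_{\bfn},G_{\bfn},\Gamma_{\bfn})$ so that a \emph{single} $\eta$ works for $\gg 2^sW^{-O(\ref{BONSTShortSeq})}H$ configurations; then substitute $n=pr+H-b$ so that $\eta_{(2)}\circ g_*(n,p(qt+r)-n)$ becomes a polynomial in the three variables $(p,b,t)$ with configuration-independent coefficients $\beta_{l_1,l_2,i}$; then apply Lemmas \ref{LemCoeffBound} and \ref{LemCNormBound} three times (in $t$, then in $b$, then in $p$) to pin down every nonconstant coefficient; and finally show (Lemma \ref{LemFailTotalED}) that these Diophantine bounds force $\{g_{\bfn_2}(h)\Gamma_{\bfn_2}\}_{h\in[H]}$ to be nearly constant along progressions of length $\approx W^{-2\ref{CONSTImplicit}\ref{BONSTShortSeq}}H$, contradicting the total $W^{-\ref{CONSTImplicit}^{-1}\ref{BONSTTotalED}}$-equidistribution guaranteed by $\bfn_2\in\cN$. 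Your substitute — "by Lemma \ref{LemGenericED}, $\tilde\eta_2(\omega'_{jk})$ has no rational approximation of height $\leq W^{c'\ref{BONSTTotalED}}$ closer than $W^{-c'\ref{BONSTTotalED}}$, so a Weyl estimate confines $p_2$" — assumes exactly what needs to be proved: Lemma \ref{LemGenericED} gives equidistribution of $\{g_{\bfn_2}(h)\}_{h\in[H]}$, and converting that into irrationality of an \emph{individual} coefficient $\tilde\eta_2(\omega'_{jk})$ (rather than of the $n_2$-dependent combinations $\sum_j\binom{n_2}{j}\tilde\eta_2(\omega'_{jk})$ that actually appear) is the content of the missing Lemma-\ref{LemFailTotalED}-type argument. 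Moreover, without the $b=pr-n+H$ change of variables, the leading coefficient and the target window of your polynomial in $p_2$ both vary with $n_2$ over $\sim H$ values, so the Weyl count in $p_2$ alone does not give a bound uniform enough to multiply by the factor $H$ from Lemma \ref{LemIntersect}. These are not presentational issues; they are the core of the proof.
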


The proof of the proposition is postponed to the next section.

\begin{proposition}\label{PropMinorLong}In the settings of Proposition \ref{PropJoining}, the expression \begin{equation}\label{EqPropMinorLong}\begin{aligned}
&\sum_{\bfn_1,\bfn_2\in\cN\times\cJ}\sum_{\substack{p_1,p_2\in(2^{s-1},2^s]\\ \#\cA_{\bfn_1,\bfn_2,p_1,p_2}\geq 2^{-s}W^{-(\ref{BONSTShortSeq}+3)} H}}\sum_{l\in\cA_{\bfn_1,\bfn_2,p_1,p_2}}\beta(p_1)\overline{\beta(p_2)}\\
&\hskip2cm \tF_{\bfn_1}(g_{\bfn_1}(p_1l-n_1)\Gamma_{\bfn_1})\overline{\tF_{\bfn_2}(g_{\bfn_2}(p_2l-n_2)\Gamma_{\bfn_2})}
\end{aligned}\end{equation} satisfies $|\eqref{EqPropMinorLong}|\ll 2^sW^{-\ref{BONSTShortSeq}}H^2N$.\end{proposition}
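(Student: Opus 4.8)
The plan is to prove Proposition~\ref{PropMinorLong} by a two-case dichotomy on the partner pairs $(\bfn_2,p_2)$, with $(\bfn_1,p_1)$ held fixed. First I would fix $\bfn_1\in\cN\times\cJ$ and a prime $p_1\in(2^{s-1},2^s]$, and examine the inner double sum of \eqref{EqPropMinorLong} over those $(\bfn_2,p_2)$ subject to $\#\cA_{\bfn_1,\bfn_2,p_1,p_2}\geq 2^{-s}W^{-(\ref{BONSTShortSeq}+3)}H$. Since $2^{s-1}\geq P_1>W$, Lemma~\ref{LemIntersectionSize} (via \eqref{EqLemIntersectionSize}) gives the uniform bound $\#\cA_{\bfn_1,\bfn_2,p_1,p_2}\leq\#\cA_{\bfn_1,p_1}\ll p_1^{-1}W^{-3}H\ll 2^{-s}W^{-3}H$. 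The dichotomy is $(\bfn_2,p_2)\in\Omega:=\Omega_{s,\bfn_1,p_1,\ref{BONSTShortSeq}}$ versus $(\bfn_2,p_2)\notin\Omega$.

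In the first case I would bound each inner $l$-sum trivially by $4\,\#\cA_{\bfn_1,\bfn_2,p_1,p_2}$, using $|\beta|\le 1$ and $\|\tF_{\bfn}\|_{C^0}\le 2$ from \eqref{EqMinorTermC0Bd}, and then invoke Proposition~\ref{PropJoining} to get $\#\Omega<2^sW^{-\ref{BONSTShortSeq}}H$; so this part contributes $\ll 2^sW^{-\ref{BONSTShortSeq}}H\cdot 2^{-s}W^{-3}H=W^{-\ref{BONSTShortSeq}-3}H^2$. In the second case, the defining inequality~(iii) of Definition~\ref{DefPropJoining} fails, and since the prefactor $\beta(p_1)\overline{\beta(p_2)}$ is a unimodular constant in $l$, each inner $l$-sum is then $<W^{-\ref{BONSTShortSeq}}\#\cA_{\bfn_1,\bfn_2,p_1,p_2}$; hence this part is at most $W^{-\ref{BONSTShortSeq}}\sum_{(\bfn_2,p_2)}\#\cA_{\bfn_1,\bfn_2,p_1,p_2}$, where the sum now runs over all $(\bfn_2,p_2)$ with $\cA_{\bfn_1,\bfn_2,p_1,p_2}\neq\emptyset$ (dropping the other restrictions only loses a positive quantity).

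The one step that is not purely mechanical is estimating this last double sum, which I would handle by double counting: $\sum_{(\bfn_2,p_2)}\#\cA_{\bfn_1,\bfn_2,p_1,p_2}=\sum_{l\in\cA_{\bfn_1,p_1}}\#\{(\bfn_2,p_2):l\in\cA_{\bfn_2,p_2}\}$. For a fixed $l$ and a fixed prime $p_2\in(2^{s-1},2^s]$, membership $l\in\cA_{\bfn_2,p_2}$ forces $h_2:=p_2l-n_2\in[H]$, and $h_2$ then determines $n_2$ together with both coordinates of $\bfj_2$ (the block index $k_2$ with $h_2\in I_{k_2}$, and $j_2\equiv p_2l\pmod q$); so there are at most $H$ admissible $\bfn_2$ for each $(l,p_2)$, and $\ll 2^s$ choices of $p_2$, giving $\#\{(\bfn_2,p_2):l\in\cA_{\bfn_2,p_2}\}\ll 2^sH$. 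Combined with $\#\cA_{\bfn_1,p_1}\ll 2^{-s}W^{-3}H$ this yields $\sum_{(\bfn_2,p_2)}\#\cA_{\bfn_1,\bfn_2,p_1,p_2}\ll W^{-3}H^2$, so the second case also contributes $\ll W^{-\ref{BONSTShortSeq}-3}H^2$ per fixed $(\bfn_1,p_1)$. Summing both cases over the $\le\#\cN\cdot\#\cJ\cdot\#\{p_1\}\ll NW^32^s$ pairs $(\bfn_1,p_1)$ — using $\#\cJ\ll W^3$ from \eqref{EqEnsembleSize} — gives $|\eqref{EqPropMinorLong}|\ll NW^32^s\cdot W^{-\ref{BONSTShortSeq}-3}H^2=2^sW^{-\ref{BONSTShortSeq}}H^2N$, which is exactly the claim.

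I do not expect any genuine difficulty inside this argument: it is bookkeeping once Proposition~\ref{PropJoining} is available. The real obstacle, deferred to the next section, is Proposition~\ref{PropJoining} itself, i.e.\ showing that for each fixed $(\bfn_1,p_1)$ only $\ll 2^sW^{-\ref{BONSTShortSeq}}H$ partner pairs $(\bfn_2,p_2)$ can produce a correlation of the nilsequence blocks as large as $W^{-\ref{BONSTShortSeq}}\#\cA_{\bfn_1,\bfn_2,p_1,p_2}$; that is where the quantitative factorization of Section~\ref{SecLeibman} and the correlated equidistribution of the short orbits $\{g_{\bfn}(h)\}_{h}$ must be brought to bear.
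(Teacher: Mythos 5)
Your proof is correct and follows essentially the same route as the paper: the same dichotomy on whether $(\bfn_2,p_2)$ lies in $\Omega_{s,\bfn_1,p_1,\ref{BONSTShortSeq}}$, with Proposition~\ref{PropJoining} plus the trivial bound handling the first case and the failure of Definition~\ref{DefPropJoining}(iii) handling the second. The only cosmetic difference is that you re-derive the count of admissible partners by double counting over $l$, whereas the paper invokes Lemma~\ref{LemIntersect} directly; both yield the same $\ll 2^sW^{-\ref{BONSTShortSeq}}H^2N$ bound.
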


\begin{proof} As $|\beta|\leq 1$ and $\|\tF_{\bfn}\|_{C^0}\leq 2$ for all $\bfn$, in $|\eqref{EqPropMinorLong}|$, using Lemma \ref{LemIntersectionSize} and Proposition \ref{PropJoining}, the contribution from configuration with $(\bfn_2,p_2)\in \Omega_{s,\bfn_1,p_1,\ref{BONSTShortSeq}}$ is bounded by
\begin{equation}\label{EqPropMinorLong1}\begin{aligned}&(\#\cN\cdot\#\cJ)\cdot 2^s\cdot(\max_{\bfn_1,p_1}\#\Omega_{s,\bfn_1,p_1,\ref{BONSTShortSeq}})(\max_{\bfn_1,\bfn_2,p_1,p_2}\#\cA_{\bfn_1,\bfn_2,p_1,p_2})\cdot 4\\
\ll &N W^3\cdot 2^s \cdot 2^sW^{-\ref{BONSTShortSeq}}H\cdot 2p^{-1}W^{-3}H\\
\ll& 2^sW^{-\ref{BONSTShortSeq}}H^2N.\end{aligned}\end{equation}

And the contribution from out of this collection is bounded, thanks to Lemma \ref{LemIntersectionSize}, Lemma \ref{LemIntersect} and the construction of $\Omega_{s,\bfn_1,p_1,\ref{BONSTShortSeq}}$, by
\begin{equation}\label{EqPropMinorLong1}\begin{aligned}&(\#\cN\cdot\#\cJ)\cdot 2^{2s}\cdot\max_{\bfn_1,p_1,p_2}\sum_{\substack{\bfn_2\in\cN\times\cJ\\\cA_{\bfn_1,\bfn_2,p_1,p_2}\neq\emptyset}}\\
&\hskip5mm\Big|\sum_{l\in\cA_{\bfn_1,\bfn_2,p_1,p_2}}\tF_{\bfn_1}(g_{\bfn_1}(p_1l-n_1)\Gamma_{\bfn_1})\overline{\tF_{\bfn_2}(g_{\bfn_2}(p_2l-n_1)\Gamma_{\bfn_2})}
    \Big|\\
\ll& NW^3\cdot 2^{2s}\cdot H\cdot W^{-\ref{BONSTShortSeq}}\max_{\bfn_1,\bfn_2,p_1,p_2}\#\cA_{\bfn_1,\bfn_2,p_1,p_2}\\
\ll & NW^3\cdot 2^{2s}\cdot H\cdot W^{-\ref{BONSTShortSeq}}2^{-s}W^{-3}H\\
=&2^sW^{-\ref{BONSTShortSeq}}H^2N.
\end{aligned}\end{equation}

The lemma follows by combining these two bounds.
\end{proof}

Now adding up the estimates from Propositions \ref{PropMinorShort} and \ref{PropMinorLong} leads to the proof of Proposition \ref{PropMinor}.

\begin{proof}[Proof of Proposition \ref{PropMinor}]By Propositions \ref{PropMinorShort} and \ref{PropMinorLong}, when $\ref{CONSTImplicit}$ is sufficiently large, under assumptions \eqref{EqPropJoiningCond}, we have
\begin{equation}\begin{aligned}
\eqref{EqMinorS1}\ll &2^{-\frac s2}N^\frac12\eqref{EqMinorS2}^\frac12\leq 2^{-\frac s2}N^\frac12(\eqref{EqPropMinorShort}+\eqref{EqPropMinorLong})^\frac12\\
\ll&2^{-\frac s2}N^\frac12\cdot 2^{\frac s2}W^{-\frac{\ref{BONSTShortSeq}}2}HN^\frac12\\
=&W^{-\frac{\ref{BONSTShortSeq}}2}HN.
\end{aligned}\end{equation}

Hence,
\begin{equation}\begin{aligned}
|\eqref{EqCorSqFree}|= &|\eqref{EqMinorSDya}|\leq \sum_{s\in(s_-,s_+]}\eqref{EqMinorS1}\leq s_+W^{-\frac{\ref{BONSTShortSeq}}2}HN,
\end{aligned}\end{equation}
and by Corollary \ref{EqCorSqFree},
\begin{equation}\label{EqPropMinorPf1}\begin{aligned}
|\eqref{EqMinorS}|\leq  &|\eqref{EqCorSqFree}|+(2^{-s_-}+W^{-\ref{BONSTTotalED}})HN\\
\ll&(s_+W^{-\frac{\ref{BONSTShortSeq}}2}+2^{-s_-}+W^{-\ref{BONSTTotalED}})HN.
\end{aligned}\end{equation}

We now set the parameters $s_-$, $s_+$, $\ref{BONSTTotalED}$ and $\ref{BONSTShortSeq}$. Let $s_+=\lfloor \frac1{10}\log H\rfloor$. and $s_-=\lfloor20\epsilon s_+\rfloor$. This guarantees that $
N-\#\cS\ll\frac{s_-}{s_+}N\leq\epsilon N$. Moreover, $2^{-s_-}< H^{-\epsilon}$.

Assume in addition that $\ref{BONSTTotalED}\geq 10\ref{CONSTImplicit}$ and let $B_2=\ref{CONSTImplicit}^{-1}B_1$. The inequalities in \eqref{EqPropMinorCond}, together with the fact that $W\in[R,R^{\ref{CONSTWRange}\ref{BONSTTotalED}^m}]$, imply $W^{\ref{BONSTTotalED}}<R^{\ref{CONSTWRange}\ref{BONSTTotalED}^{m+1}}<H^\epsilon<H$. This also implies for all $s\in(s_-,s_+)$, $2^s>2^{s_-}>H^\epsilon>W$. So all conditions in \eqref{EqPropJoiningCond} are verified.

\eqref{EqPropMinorPf1} now yields
\begin{equation}\label{EqPropMinorPf2}\begin{aligned}
|\eqref{EqMinorS}|\ll  &(W^{-\frac{\ref{CONSTImplicit}^{-1}\ref{BONSTTotalED}}2}\log H+H^{-\epsilon}+W^{-\ref{BONSTTotalED}})HN\\
\ll& (W^{-\frac{\ref{CONSTImplicit}^{-1}\ref{BONSTTotalED}}2}\log H+H^{-\epsilon})HN.
\end{aligned}\end{equation} Finally, to complete the proof, one only needs to replace the value of the constant $\ref{CONSTImplicit}$ with $10\ref{CONSTImplicit}$.
\end{proof}

\section{Proof of  Proposition \ref{PropJoining}}\label{SecMinorII}

This part contains the proof of Proposition \ref{PropJoining} by contradiction. In the rest of Section \ref{SecMinorII}, we will assume that $t$, $s$, $\bfn_1$, $p_1$ are all fixed. For brevity, we will replace the notations $\bfn_2$ and $p_2$ with $\bfn$ and $p$.

Because one may choose the constant $\ref{CONSTImplicit}$ as long as it depends only on $m$ and $d$, instead of \eqref{EqPropJoiningCond} we will assume instead:
\begin{equation}\label{EqPropJoiningCond1}2^s>W\geq 10, \ref{BONSTShortSeq}\geq 10, \ref{BONSTTotalED}\geq 10\ref{CONSTImplicit}^2\ref{BONSTShortSeq}, H\geq \max(W^{\ref{BONSTTotalED}},2^{10s}),\end{equation}

%First, note that by Lemma \ref{LemIntersect}, it suffices to show thatf for a given configuration $(\bfn_1,p_1)$, at most $2^s W^{-\ref{BONSTShortSeq}}$ primes $p \in (2^{s-1},2^s]$ may appear as the second coordinate of elements in $\Omega_{s,\bfn_1,p_1,\ref{BONSTShortSeq}}$.

In order to get contradiction, suppose for $\bfn_1\in\cN\times\cJ$ and $p_1\in(2^{s-1},2^s]$, \begin{equation}\label{EqPropJoiningCount}\#\Omega_{s,\bfn_1,p_1,\ref{BONSTShortSeq}}\geq 2^sW^{-\ref{BONSTShortSeq}}H.\end{equation} Let $(\bfn ,p )$ be an element of $\Omega_{s,\bfn_1,p_1,\ref{BONSTShortSeq}}$, then $p_1,p\geq 2^s>W \geq q$. By the proof of Lemma \ref{LemIntersectionSize}, as $\cA_{\bfn_1,\bfn ,p_1,p }$ is the intersection of two finite arithmetic progressions $\cA_{\bfn_1,p_1}$, $\cA_{\bfn ,p }$ of step length $q$, it also has step length $q$ itself whenever it is non-empty.

Since $\bfn_1$ and $p_1$ are fixed, the arithmetic progression $\cA_{\bfn_1,p_1}$ can be parametrized as $\{qt+r: t\in[T]\}$ for some $r\in\bZ$. Here  by \eqref{EqLemIntersectionSize} \begin{equation}\label{EqTLength}T=\#\cA_{\bfn_1,p_1}\leq 4\cdot2^{-s}W^{-3}H.\end{equation}

When $(\bfn ,p )\in \Omega_{s,\bfn_1,p_1,\ref{BONSTShortSeq}}$, the subsequence $\cA_{\bfn_1,\bfn ,p_1,p }$ has the form $\{qt+r: t\in\cA'_{\bfn ,p }\}$ where $\cA'_{\bfn ,p }$ is a subinterval of integers in $[T]$ of length $\#\cA _{\bfn_1,\bfn ,p_1,p }\geq 2^{-s}W^{-\ref{BONSTShortSeq}} H$.

The conditions \eqref{ItemPropJoining2} and \eqref{ItemPropJoining3} on $\Omega_{s,\bfn_1,p_1,\ref{BONSTShortSeq}}$ in Definition \ref{DefPropJoining} can be rewritten as
\begin{equation}\label{EqPropJoining1}
  \cA'_{\bfn ,p }\geq 2^{-s}W^{-(\ref{BONSTShortSeq}+3)} H
\end{equation}
and
\begin{equation}\label{EqPropJoining2}\begin{aligned}
 & \left|\sum_{t\in \cA'_{\bfn ,p }}\tF_{\bfn_1}(g_{\bfn_1}(p_1(qt+r)-n_1)\Gamma_{\bfn_1})\overline{\tF_\bfn(g_\bfn(p (qt+r)-n )\Gamma_\bfn)}
    \right| \\
\geq& W^{-\ref{BONSTShortSeq}}\#\cA'_{\bfn ,p }\end{aligned}
\end{equation}

For every configuration $(\bfn ,p )=(n ,\bfj ,p )=(n ,k ,j ,p )\in\Omega_{s,\bfn_1,p_1,\ref{BONSTShortSeq}}$. Define polynomial sequences $g_{\bfn ,p },\tg_{\bfn ,p }: \bZ\rightarrow G_{\bfn_1}\times G_\bfn$ by \begin{equation}\label{EqJoiningPoly}g_{\bfn ,p }(l)=\big(g_{\bfn_1}(p_1l-n_1),g_\bfn(p l-n)\big);\ \tg_{\bfn ,p }(t)=g_{\bfn ,p }(qt+r).\end{equation} Note that the definition of $\tg_{\bfn ,p }$ depends on the choice of $\bfn $.

Then $g_{\bfn ,p },\tg_{\bfn ,p }\in \Poly(\bZ, (G_{\bfn_1})_\bullet\times (G_\bfn)_\bullet)$. From \eqref{EqMinorTermC0Bd}, \eqref{EqTLength}, \eqref{EqPropJoining1} and \eqref{EqPropJoining2}, we know that the sequence $(\tg_{\bfn ,p }(t)(\Gamma\times\Gamma))_{t\in\cA'_{\bfn,p}}$ is not totally $2^{-2}W^{-\ref{BONSTShortSeq}}$-equidistributed in $(G_{\bfn_1}/\Gamma_{\bfn_1})\times(G_\bfn/\Gamma_\bfn)$. Then by Lemma \ref{LemEDTotalED}, for a shorter length $T'_{\bfn ,p }\geq 2^{-5}W^{-2\ref{BONSTShortSeq}}T$,  the sequence $(\tg_{\bfn ,p }(t)(\Gamma\times\Gamma))_{t\in[T'_{\bfn ,p }]}$ fails to be $2^{-5}W^{-2\ref{BONSTShortSeq}}$-equidistributed in $(G_{\bfn_1}/\Gamma_{\bfn_1})\times(G_\bfn/\Gamma_\bfn)$.

By Proposition \ref{PropLeibmanGT}, there exists a horizontal character $\eta_{\bfn ,p }$ of $(G_{\bfn_1}/\Gamma_{\bfn_1})\times(G_\bfn/\Gamma_\bfn)$ such that \begin{equation}\label{EqEta1}0<|\eta_{\bfn ,p }|<W^{O(\ref{BONSTShortSeq})}\end{equation} and $\|\eta_{\bfn ,p }\circ \tg_{\bfn ,p }\|_{C^\infty([T'_{\bfn ,p }])}\ll W^{O(\ref{BONSTShortSeq})}$. As $T'_{\bfn ,p }\gg W^{-2\ref{BONSTShortSeq}}T$, this implies that  \begin{equation}\label{EqEta2}\|\eta_{\bfn ,p }\circ \tg_{\bfn ,p }\|_{C^\infty([T])}\ll W^{O(\ref{BONSTShortSeq})}.\end{equation}  Here the norm $|\eta_{\bfn ,p }|$ is measured in terms of the Mal'cev basis $\cV_\bfn\cup\cV_{\bfn'}$, where $\cV_\bfn=\cV_{n ,\bfj }$ and $\cV_{\bfn_1}=\cV_{n_1,\bfj_1}$ are defined in Section \ref{SecBilinear}.

Recall from our construction in Section \ref{SecBilinear} that the sequences $G_\bfn$, $\Gamma_\bfn$, $\cV_\bfn$ are determined by $\gamma_\bfn$, which in turn depends only on the variables $n$, $j$ in $\bfn=(n,k,j)$ and is $q$-periodic in $n$. So there are $\gamma_*$, $G_*$, $\Gamma_*$, $\cV_*$ such that for at least $q^{-2}\#\Omega_{s.\bfn_1,p_1}$ choices of $(\bfn ,p )\in\Omega_{s,\bfn_1,p_1,\ref{BONSTShortSeq}}$, \begin{equation}\label{EqPigeonG}(\gamma_\bfn, G_\bfn,\Gamma_\bfn, \cV_\bfn)=(\gamma_*,G_*,\Gamma_*, \cV_*).\end{equation} Note that the choices of horizontal characters satisfying \eqref{EqEta1} is bounded by $W^{O(\ref{BONSTShortSeq})}$. Given \eqref{EqPropJoiningCount} and that $q\leq W$, by pigeonhole principle, we can find some horizontal character $\eta$ of $(G_{\bfn_1}/\Gamma_{\bfn_1})\times(G_*/\Gamma_*)$ such that for a set $\Omega_*$ of at least $2^sW^{-O(\ref{BONSTShortSeq})}H$ choices of $(\bfn ,p)\in\Omega_{s,\bfn_1,p_1,\ref{BONSTShortSeq}}$, \eqref{EqPigeonG} holds and $\eta_{\bfn ,p }=\eta$.

Therefore,
\begin{align}\label{EqPigeonNorm}\|\eta\circ  \tg_{\bfn ,p }\|_{C^\infty[T]}\ll W^{O(\ref{BONSTShortSeq})}\end{align} holds for at least $2^sW^{-O(\ref{BONSTShortSeq})}H$ choices of $(\bfn ,p)\in\Omega_{s,\bfn_1,p_1,\ref{BONSTShortSeq}}$. In particular, because of the fact $\#\cJ\leq W^3$ and Lemma \ref{LemIntersect}, there is a set $\cP_{s,\bfn_1,p_1}\subseteq\{p\text{ prime:} p\in(2^{s-1},2^s]\}$ of size \begin{equation}\label{EqPSetSize}\#\cP_{s,\bfn_1,p_1}\gg 2^sW^{-O(\ref{BONSTShortSeq})},\end{equation} such that for all $p\in\cP_{s,\bfn_1,p_1}$, there are at least $W^{-O(\ref{BONSTShortSeq})}H$ choices of $n$, such that for some $\bfj$, the configuration $\bfn=(n,\bfj)$ satisfies $(\bfn,p)\in\Omega_{s,\bfn_1,p,\ref{BONSTShortSeq}}$ and \eqref{EqPigeonNorm}.

Recall that $g_\bfn(h)=\gamma_\bfn^{-1}g'(n,h)\gamma_\bfn$. So for the polynomial $g_*(n,h)=\gamma_*^{-1}g'(n,h)\gamma_*$ and every $(\bfn ,p )\in\Omega_{s,\bfn_1,p_1,\ref{BONSTShortSeq}}$, $g_\bfn(h)=g_*(n ,h)$ where $n $ is the first coordinate of $\bfn =(n ,k ,j )$. In this case,
\begin{equation}\label{EqJoiningPoly1}\tg_{\bfn,p}(t)=\big(g_{\bfn_1}(p_1(qt+r)-n_1),g_*(n,p(qt+r)-n)\big).\end{equation}

Write $\eta=\eta_{(1)}\oplus\eta_{(2)},$ where $\eta_{(1)}$ and $\eta_{(2)}$ are respectively horizontal characters of $G_{\bfn_1}/\Gamma_{\bfn_1}$ and $G_*/\Gamma_*$ and at least one of them is non-zero. Then $\eta_{(1)}\circ g_{\bfn_1}:\bZ\to\bR$ and $\eta_{(1)}\circ g_*:\bZ^2\to\bR$ are polynomials of total degree bounded by $d$, where $d$ is the step of nilpotency of $G_\bullet$. As $p_1$, $r$, $q$, $\bfn_1$ are all fixed, one can write
\begin{equation}\label{EqEta1}\eta_{(1)}\circ g_{\bfn_1}(t)=\sum_{l=0}^{d }\alpha_lt^l.\end{equation}
\begin{equation}\label{EqEta2_0}\eta_{(1)}\circ g_*(n,h)=\sum_{\substack{l_1,l_2\geq 0\\l_1+l_2\leq d }}\beta^*_{l_1,l_2}n^{l_1}h^{l_2}.\end{equation}

We now parametrize $\eta_{(2)}\circ g_*$ in a better way. When $(\bfn,p)\in\Omega_{s,\bfn_1,p_1,\ref{BONSTShortSeq}}$, $\cA_{\bfn_1,n,p_1,p}\neq\emptyset$. So we can fix an $t_0=t_0(\bfn,p)\in[T]$ such that $p(qt_0+r)-n\in\cI_\bfn\subset [H]$. On the other hand, because $t_0\leq T=\#\cA_{\bfn_1,p_1}$, by \eqref{EqLemIntersectionSize}, $0<pqt_0\leq 2pq\cdot q^{-1}p_1^{-1}W^{-2}H\leq 4W^{-2}H$. Thus $pr-n\in[-4W^{-2}H,H]\subseteq (-H,H].$ We will write $b=pr-n+H$. Then $b\in[2H]$. For $u\in\bZ$, we can write

\begin{equation}\label{EqEta2U}\begin{aligned}&\eta_{(2)}\circ g_*(n,qu+pr-n)\\
=&\eta_{(2)}\circ g_*(pr+H-b,qu+b-H)\\
=&\sum_{\substack{l_1,l_2\geq 0\\l_1+l_2\leq d }}\beta^*_{l_1,l_2}(pr+H-b)^{l_1}(qu+b-H)^{l_2}\\
=:&\sum_{\substack{l_1,l_2,i\geq 0\\l_1+l_2+i\leq d }}\beta_{l_1,l_2,i}p^{l_1}u^{l_2}b^i\end{aligned}\end{equation}

In particular, for $u=pt$, we have
\begin{equation}\label{EqEta2}\begin{aligned}&\eta_{(2)}\circ g_*(n,p(qt+r)-n)\\
=&\eta_{(2)}\circ g_*(pr+H-b,q(pt)+b-H)\\
=&\sum_{\substack{l_1,l_2,i\geq 0\\l_1+l_2+i\leq d }}\beta_{l_1,l_2,i}p^{l_1}(pt)^{l_2}b^i\\
=&\sum_{l=0}^{d }\sum_{l'=l}^{d }\sum_{i=0}^{d -l'}\beta_{l'-l,l,i}p^{l'}b^it^l\end{aligned}\end{equation}
then
\begin{equation}\label{EqJoiningPoly2}\eta\circ \tg_{\bfn ,p}(t)=\sum_{l=0}^{d }(\alpha_l+\sum_{l'=l}^{d }\sum_{i=0}^{d -l'}\beta_{l'-l,l,i}p^{l'}b^i)t^l,\end{equation} where the coefficients $\beta_{l'-l,l,i}$ are independent of $p$,$b$ and $t$ (but depend on $\bfn_1$, $p_1$ and $H$).

The earlier discussion asserts that for all $p\in\cP_{s,\bfn_1,p_1}$, there are is a subset $\cB_{s,\bfn_1,p_1,p}\subseteq[2H]$ whose size satisfies \begin{equation}\label{EqBSetSize}\#\cB_{s,\bfn_1,p_1,p}\gg W^{-O(\ref{BONSTShortSeq})}H\end{equation} such that for all $b\in\cB_{s,\bfn_1,p_1,p}$, $\|\eqref{EqJoiningPoly2}\ (\mod \bZ)\|_{C^\infty([T])}\ll W^{O(\ref{BONSTShortSeq})}$, where \eqref{EqJoiningPoly2} is regarded as a polynomial in $t$.

For such pairs $(p,b)$, by Lemma \ref{LemCoeffBound} and \eqref{EqTLength}, we can find a positive integer $Z_1\ll O(1)$ such that for all $0\leq l\leq d$,
\begin{equation}\label{EqDivisor1}\Big\|Z_1(\alpha_l+\sum_{l'=l}^{d }\sum_{i=0}^{d -l'}\beta_{l'-l,l,i}p^{l'}b^i)\Big\|_{\bR/\bZ}
\ll  W^{O(\ref{BONSTShortSeq})}T^{-l}\ll 2^{ls}W^{O(\ref{BONSTShortSeq})}H^{-l}.\end{equation}
By using pigeonhole principle, one can make $Z_1$ independent of $b$ after substituting $\cB_{s,\bfn_1,p_1,p}$ with a smaller subset whose size still satisfies the lower bound \eqref{EqPSetSize}.

We now view $Z_1(\alpha_l+\sum_{l'=l}^{d }\sum_{i=0}^{d -l'}\beta_{l'-l,l,i}p^{l'}b^i)$ as a polynomial of $b$. Applying Lemma \ref{LemCNormBound} (with $\epsilon= 2^{ls}W^{O(\ref{BONSTShortSeq})}H^{-l}$ and $\delta=W^{-O(\ref{BONSTShortSeq})}$), we reduce from \eqref{EqDivisor1} that there is a positive integer $Z_2\ll W^{O(\ref{BONSTShortSeq})}$ such that
\begin{equation}\label{EqDivisor2}\Big\|Z_2Z_1(\alpha_l+\sum_{l'=l}^{d }\sum_{i=0}^{d -l'}\beta_{l'-l,l,i}p^{l'}b^i)\ (\mod \bZ)\Big\|_{C^\infty[2H]}\ll 2^{ls}W^{O(\ref{BONSTShortSeq})}H^{-l},\end{equation}

Again by Lemma \ref{LemCoeffBound},  for all $p\in\cP_{s,\bfn_1,p_1}$, there is a positive integer $Z_3\ll O(1)$, such that for all $i\geq 1$, $l\geq 0$ such that $i+l\leq d$,
\begin{equation}\label{EqDivisor3}\Big\|Z_3Z_2Z_1\sum_{l'=l}^{d -i}\beta_{l'-l,l,i}p^{l'}\Big\|_{\bR/\bZ}
\ll 2^{ls}W^{O(\ref{BONSTShortSeq})}H^{-i-l}.\end{equation}
And, when $i=0$, for all $0\leq l\leq d$, \begin{equation}\label{EqDivisor3'}\Big\|Z_3Z_2Z_1(\alpha_l+\sum_{l'=l}^d\beta_{l'-l,l,0}p^{l'})\Big\|_{\bR/\bZ}
\ll 2^{ls}W^{O(\ref{BONSTShortSeq})}H^{-l}.\end{equation}

Lemma \ref{LemCNormBound} applies again, with respect to the variable $p\in[2^s]$, with $\epsilon=2^{ls}W^{O(\ref{BONSTShortSeq})}H^{-l}$, $\delta=W^{-O(\ref{BONSTShortSeq})}$,  and yields a positive integer $Z_4\ll W^{O(\ref{BONSTShortSeq})}$ that:

For all $i\geq 1$, $0\leq l\leq d$ subject to $i+l'\leq d$,
\begin{equation}\label{EqDivisor4}\Big\|Z_4Z_3Z_2Z_1\sum_{l'=l}^{d -i}\beta_{l'-l,l,i}p^{l'}\ (\mod \bZ)\Big\|_{C^\infty([2^s])}
\ll 2^{ls}W^{O(\ref{BONSTShortSeq})}H^{-i-l};\end{equation}
and for $i=0$ and $0\leq l\leq d$,
\begin{equation}\label{EqDivisor4'}\Big\|Z_4Z_3Z_2Z_1(\alpha_l+\sum_{l'=l}^d\beta_{l,l',0}p^{l'})\ (\mod \bZ)\Big\|_{C^\infty([2^s])}
\ll 2^{ls}W^{O(\ref{BONSTShortSeq})}H^{-l}.\end{equation}

A final round of application of Lemma \ref{LemCoeffBound} tells us that, for a positive integer $Z_5\ll O(1)$, the following properties hold:

For all $i\geq 1$, $0\leq l\leq l'\leq d$ subject to $i+l\leq d$,
\begin{equation}\label{EqDivisor5}\Big\|Z_5Z_4Z_3Z_2Z_1\beta_{l'-l,l,i}\Big\|_{\bR/\bZ}
\ll 2^{(l-l')s}W^{O(\ref{BONSTShortSeq})}H^{-i-l};\end{equation}
in addition, for $i=0$ and $0\leq l\leq l'\leq d$ with $l'\geq 1$, \eqref{EqDivisor5} also holds.

Write $Z=Z_5Z_4Z_3Z_2Z_1$, which is an integer that is independent of $b$ and $t$, and satisfies $Z\ll W^{O(\ref{BONSTShortSeq})}$. Thus the character $Z\eta_{(2)}$ satisfies \begin{equation}\label{EqDivisorSize}|Z\eta_{(2)}|\ll |Z|\cdot|\eta|\ll W^{O(\ref{BONSTShortSeq})}.\end{equation}

As we state in Notation \ref{NotationMain}, one choose a sufficiently large constant $\ref{CONSTImplicit}=O(1)\geq 10$ which serves as the implicit constants both in the exponent of $W^{O(\ref{BONSTShortSeq})}$ of \eqref{EqDivisor5} and in \eqref{EqDivisorSize}. Now \eqref{EqDivisor5} writes \begin{equation}\label{EqDivisorDioph6}\Big\|Z\beta_{l'-l,l,i}\Big\|_{\bR/\bZ}
\ll 2^{(l-l')s}W^{\ref{CONSTImplicit}\ref{BONSTShortSeq}}H^{-i-l};\end{equation}

In other words, the inequality \begin{equation}\label{EqDivisorDioph}\Big\|Z\beta_{l_1,l_2,i}\Big\|_{\bR/\bZ}
\ll 2^{-l_1s}W^{\ref{CONSTImplicit}\ref{BONSTShortSeq}}H^{-i-l_2}\end{equation} holds for all integer triples $(l_1,l_2,i)$ such that $l_1,l_2,i\geq 0$, $l_1+l_2+i\leq d$ and $l_1$, $l_2$, $i$ are not simultaneously equal to $0$.

\begin{lemma}\label{LemFailTotalED} One can choose the constant $\ref{CONSTImplicit}=\ref{CONSTImplicit}(m,d)\geq 10$ to be sufficiently large, such that :

If \eqref{EqPropJoiningCond1} and   \eqref{EqPropJoiningCount}  both hold then for every configuration $(\bfn,p)\in\Omega_{s,\bfn_1,p_1,\ref{BONSTShortSeq}}$,  the sequence $\{g_\bfn(h)\Gamma_\bfn\}_{h\in[H]}$ is not totally $W^{-\ref{CONSTImplicit}^{-1}\ref{BONSTTotalED}}$-equidistibuted in $G_\bfn/\Gamma_\bfn$.
\end{lemma}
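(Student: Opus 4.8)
I will produce, for the one–variable polynomial $g_*(n,\cdot)=\gamma_*^{-1}g'(n,\cdot)\gamma_*$ attached to a configuration in $\Omega_*$ (so that $g_\bfn=g_*(n,\cdot)$; a general $(\bfn,p)\in\Omega_{s,\bfn_1,p_1,\ref{BONSTShortSeq}}$ reduces to this one because $g_\bfn(h)=(\gamma_\bfn^{-1}\gamma_*)\,g_*(n,h)\,(\gamma_\bfn^{-1}\gamma_*)^{-1}$ with $\gamma_\bfn^{-1}\gamma_*$ a $W^{O(1)}$-rational and $W^{O(1)}$-bounded element by \eqref{EqgammaNorm} and Lemma \ref{LemRational}, so that one passes from $G_*/\Gamma_*$ to $G_\bfn/\Gamma_\bfn$ by conjugation together with a change of lattice, as in the proof of Lemma \ref{LemGenericED}), a \emph{nontrivial} horizontal character $\theta$ of $G_*/\Gamma_*$ with $|\theta|\ll W^{O(\ref{BONSTShortSeq})}$ and $\|\theta\circ g_*(n,\cdot)\|_{C^\infty([H])}\ll W^{O(\ref{BONSTShortSeq})}$; Lemma \ref{LemEtaTotalED} then shows $\{g_*(n,h)\Gamma_*\}_{h\in[H]}$ is not totally $W^{-O(\ref{BONSTShortSeq})}$-equidistributed, and the relation $\ref{BONSTTotalED}\ge 10\ref{CONSTImplicit}^2\ref{BONSTShortSeq}$ of \eqref{EqPropJoiningCond1} with $\ref{CONSTImplicit}$ chosen large (depending only on $m,d$) upgrades this to failure of total $W^{-\ref{CONSTImplicit}^{-1}\ref{BONSTTotalED}}$-equidistribution. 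Writing the pigeonholed character as $\eta=\eta_{(1)}\oplus\eta_{(2)}$, the proof splits on whether $\eta_{(2)}\ne 0$.

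\textbf{Main case $\eta_{(2)}\ne 0$.} I take $\theta=Z\eta_{(2)}$. Fixing the configuration, put $b=pr-n+H\in[2H]$ and restrict $g_*(n,\cdot)$ to the residue class $\cI^*=\{h\in[H]:h\equiv pr-n\pmod q\}$, an arithmetic progression in $[H]$ of length $U\ge \tfrac{H}{2W}$; the substitution $h=qu+pr-n$ turns $\eta_{(2)}\circ g_*(n,\cdot)|_{\cI^*}$ into the polynomial $u\mapsto\sum_{l_2}\gamma_{l_2}u^{l_2}$ with $\gamma_{l_2}=\sum_{l_1,i}\beta_{l_1,l_2,i}\,p^{l_1}b^{i}$ by \eqref{EqEta2U}. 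For $l_2\ge 1$ every term has $(l_1,l_2,i)\ne(0,0,0)$, so \eqref{EqDivisorDioph} together with $p\le 2^s$ and $b\le 2H$ gives $\|Z\gamma_{l_2}\|_{\bR/\bZ}\le\sum_{l_1,i}\|Z\beta_{l_1,l_2,i}\|_{\bR/\bZ}\,p^{l_1}b^{i}\ll W^{\ref{CONSTImplicit}\ref{BONSTShortSeq}}H^{-l_2}$: the decaying weights $2^{-l_1 s}$ in \eqref{EqDivisorDioph} are precisely what absorb the growth $p^{l_1}\le 2^{l_1 s}$, and $b^{i}\le (2H)^{i}$ cancels the factor $H^{-i}$. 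Since $U\le H$, this gives $U^{l_2}\|Z\gamma_{l_2}\|_{\bR/\bZ}\ll W^{\ref{CONSTImplicit}\ref{BONSTShortSeq}}$ for $l_2\ge 1$, while the constant term is trivially bounded; passing to the binomial-coefficient description (at worst enlarging $Z$ by an $O_d(1)$ factor, cf.\ Lemma \ref{LemCoeffBound}) yields $\|Z\eta_{(2)}\circ g_*(n,q\cdot+pr-n)\|_{C^\infty([U])}\ll W^{\ref{CONSTImplicit}\ref{BONSTShortSeq}}$. As $Z\eta_{(2)}$ is nontrivial of modulus $\ll W^{O(\ref{BONSTShortSeq})}$ and $U\ge\tfrac H{2W}\gg W^{O(\ref{BONSTShortSeq})}$ (using $H\ge W^{\ref{BONSTTotalED}}$), Lemma \ref{LemEtaTotalED} shows this reparametrized sequence is not totally $W^{-O(\ref{BONSTShortSeq})}$-equidistributed; since $\cI^*$ is a progression in $[H]$ of length $\ge\tfrac H{2W}\gg W^{-O(\ref{BONSTShortSeq})}H$, the sequence $\{g_*(n,h)\Gamma_*\}_{h\in[H]}$ is not totally $W^{-O(\ref{BONSTShortSeq})}$-equidistributed either.

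\textbf{Degenerate case $\eta_{(2)}=0$.} Then $\eta_{(1)}\ne 0$ and all $\beta_{l_1,l_2,i}$ vanish. Reading the $i=0$ instances of the chain of estimates leading to \eqref{EqDivisorDioph} with $\beta\equiv 0$ gives $\|Z\alpha_l\|_{\bR/\bZ}\ll 2^{ls}W^{O(\ref{BONSTShortSeq})}H^{-l}$ for $1\le l\le d$, where $\eta_{(1)}\circ g_{\bfn_1}(p_1(qt+r)-n_1)=\sum_l\alpha_l t^l$; since $T\le 4\cdot 2^{-s}W^{-3}H$ this forces $\|Z\eta_{(1)}\circ g_{\bfn_1}(p_1(q\cdot+r)-n_1)\|_{C^\infty([T])}\ll W^{O(\ref{BONSTShortSeq})}$, so by Lemma \ref{LemEtaTotalED} the sequence $\{g_{\bfn_1}(h)\Gamma_{\bfn_1}\}$ fails to equidistribute along a progression inside $[H]$; but $\bfn_1\in\cN\times\cJ$, so Lemma \ref{LemGenericED} asserts $\{g_{\bfn_1}(h)\Gamma_{\bfn_1}\}_{h\in[H]}$ is totally $W^{-\ref{CONSTImplicit}^{-1}\ref{BONSTTotalED}}$-equidistributed, and matching the constants (here the hypotheses $H\ge\max(W^{\ref{BONSTTotalED}},2^{10s})$ and $\ref{BONSTTotalED}\ge 10\ref{CONSTImplicit}^2\ref{BONSTShortSeq}$ of \eqref{EqPropJoiningCond1} are used to control the length of this progression) yields a contradiction; hence $\eta_{(2)}\ne 0$ always holds and the previous paragraph applies.

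\textbf{Where the difficulty lies.} The crux is the step in the main case: converting the Diophantine inequality \eqref{EqDivisorDioph}, which only bounds the coefficients of the three-variable expansion of $\eta_{(2)}\circ g_*$, into genuine $C^\infty([H])$-control on the one-variable polynomial $h\mapsto\eta_{(2)}\circ g_*(n,h)$. This works only because one restricts to a fixed residue class modulo $q$, where $g_*(n,\cdot)$ becomes polynomial in a parameter $u$ ranging over an automatically long interval of length about $H/W$, and because the exponential weights $2^{-l_1 s}$ in \eqref{EqDivisorDioph} are exactly calibrated against the size $2^s$ of the prime $p$. The accompanying bookkeeping of the constants $\ref{CONSTImplicit},\ref{BONSTShortSeq},\ref{BONSTTotalED}$ and the scale $s$ — and, in the degenerate case, the comparison of the relevant progression length with $W^{-\ref{CONSTImplicit}^{-1}\ref{BONSTTotalED}}H$ — is the other point that must be handled with care.
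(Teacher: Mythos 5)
Your main case ($\eta_{(2)}\neq 0$) is correct and is essentially the paper's own argument: the paper restricts to the same residue class $h\equiv pr-n\ (\mathrm{mod}\ q)$ and uses \eqref{EqDivisorDioph} to show that $\exp(2\pi i Z\eta_{(2)}\circ g_*(n,\cdot))$ is nearly constant on a long subprogression, which is exactly the content of Lemma \ref{LemEtaTotalED} that you invoke after bounding the $C^\infty([U])$ norm; the cancellation of $p^{l_1}\le 2^{l_1 s}$ against the weights $2^{-l_1 s}$ and of $b^i\le (2H)^i$ against $H^{-i}$ is the same computation as \eqref{EqEt2Concentrate1}. (Your reduction of a general $(\bfn,p)\in\Omega_{s,\bfn_1,p_1,\ref{BONSTShortSeq}}$ to the pigeonholed family by conjugation is more than the application actually needs, since the contradiction in Proposition \ref{PropJoining} only requires the conclusion for configurations in the pigeonholed set, but it is harmless.)

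The genuine problem is your degenerate case $\eta_{(2)}=0$. You are right that this case needs attention — the paper's proof silently assumes $\eta_{(2)}\neq 0$ when it asserts that $\teta=\exp(2\pi iZ\eta_{(2)})$ has zero average — but your proposed contradiction with Lemma \ref{LemGenericED} does not close. From \eqref{EqPigeonNorm} and Lemma \ref{LemEtaTotalED} you obtain failure of total $W^{-O(\ref{BONSTShortSeq})}$-equidistribution of $\{g_{\bfn_1}\}$ only along the progression $\{p_1(qt+r)-n_1:t\in[T]\}$, whose length is $T\asymp 2^{-s}W^{-3}H$ by \eqref{EqLemIntersectionSize}; the bad subprogression it produces inside $[H]$ therefore has length only $\gg 2^{-s}W^{-O(\ref{BONSTShortSeq})}H$. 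To contradict the total $W^{-\ref{CONSTImplicit}^{-1}\ref{BONSTTotalED}}$-equidistribution guaranteed by Lemma \ref{LemGenericED} for $\bfn_1\in\cN\times\cJ$, you would need this length to be at least $W^{-\ref{CONSTImplicit}^{-1}\ref{BONSTTotalED}}H$, i.e. $2^s\le W^{\ref{CONSTImplicit}^{-1}\ref{BONSTTotalED}-O(\ref{BONSTShortSeq})}$. Nothing in \eqref{EqPropJoiningCond1} gives this: the hypotheses only force $2^{10s}\le H$ and $W^{\ref{BONSTTotalED}}\le H$, which are compatible with $2^s$ being an arbitrarily large power of $W$ (e.g. $H=W^{100\ref{BONSTTotalED}}$ and $2^s=H^{1/20}=W^{5\ref{BONSTTotalED}}\gg W^{\ref{CONSTImplicit}^{-1}\ref{BONSTTotalED}}$). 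The parenthetical claim that the hypotheses "control the length of this progression" is therefore unfounded, and the same obstruction blocks the natural variants (e.g. undoing the reparametrization $h=p_1(qt+r)-n_1$ multiplies the relevant character by $(p_1q)^d\asymp 2^{ds}W^d$, which is again not $W^{O(\ref{BONSTShortSeq})}$). As written, the degenerate case remains open in your argument.
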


\begin{proof} Let $r$ and $b$ be as above. Set $\cU_{\bfn,p}=\{u\in\bZ:qu+b-H\in[H]\}$. Then $\cU_{\bfn,p}$ is an interval of integers, whose length satisfies $\frac Hq-1<\#\cU_{\bfn,p}<\frac Hq+1$. Moreover, as $0<b\leq 2H$, every $u\in\cU_{\bfn,p}$ satisfies $|u|\leq\frac {2H}q$.

Fix any subinterval $\cU'_{\bfn,p}\subset\cU_{\bfn,p}$ of integers, that is of length $\lceil\frac{2W^{-2\ref{CONSTImplicit}\ref{BONSTShortSeq}-3}H}q\rceil$. We note that because of \eqref{EqPropJoiningCond1}, $\#\cU'_{\bfn,p}\geq 10$. Then for any $u_1,u_2\in\cU'$, by \eqref{EqEta2},
\begin{equation}\label{EqEt2Concentrate1}\begin{aligned}
&\|Z\eta_{(2)}\circ g_*(n,qu_1+b-H)-Z\eta_{(2)}\circ g_*(n,qu_2+b-H)\|_{\bR/\bZ}\\
=&\Big\|Z\sum_{\substack{l_1,l_2,i\geq 0\\l_1+l_2+i\leq d }}\beta_{l_1,l_2,i}p^{l_1}b^i(u_1^{l_2}-u_2^{l_2})\Big\|_{\bR/\bZ}\\
=&\Big\|Z\sum_{\substack{l_1,l_2,i\geq 0\\l_1+l_2+i\leq d }}\beta_{l_1,l_2,i}p^{l_1}b^i(u_1-u_2)\sum_{h=0}^{l_2-1}u_1^hu_2^{l_2-1-h}\Big\|_{\bR/\bZ}\\
\ll&\sum_{\substack{l_1,i\geq 0; l_2\geq 1\\l_1+l_2+i\leq d}} 2^{-l_1s}W^{\ref{CONSTImplicit}\ref{BONSTShortSeq}}H^{-i-l_2}\cdot (2^s)^{l_1} (2H)^i \Big(\frac{W^{-2\ref{CONSTImplicit}\ref{BONSTShortSeq}-3}H}q\Big)\Big(\frac Hq\Big)^{l_2-1}\\
=&\sum_{\substack{l_1,i\geq 0; l_2\geq 1\\l_1+l_2+i\leq d}} (W^{-\ref{CONSTImplicit}\ref{BONSTShortSeq}-3})q^{-l_2}\\
\ll& W^{-\ref{CONSTImplicit}\ref{BONSTShortSeq}}.
\end{aligned}\end{equation}

This implies that for the the mapping $\teta(x)=\exp(2\pi i Z\eta_{(2)}(x))$ from $G/\Gamma$ to the unit circle in $\bC$, the values of $\teta(g_\bfn(h))$ are within distance $\ll W^{-\ref{CONSTImplicit}\ref{BONSTShortSeq}}$ to each other for $h\in \{qu+b-H: u\in\cU'_{\bfn,p}\}$. Again, using the convention in Notation \ref{NotationMain}, one can assume that the implicit constant here is $\ref{CONSTImplicit}$. In particular,
\begin{equation}\begin{aligned}
\Big|\Exp_{h\in  \{qu+b-H: u\in\cU'_{\bfn,p}\}}\teta(g_\bfn(h)\Gamma_\bfn)\Big|>1-\ref{CONSTImplicit}W^{-\ref{CONSTImplicit}\ref{BONSTShortSeq}}\geq\frac12,\end{aligned}\end{equation}  as we assumed $\ref{CONSTImplicit}$, $\ref{BONSTShortSeq}$ and $W$ are all bounded by $10$ from below. Because $Z\eta$ is a non-zero character, $\teta$ has zero average on $G_\bfn/\Gamma_\bfn$. In addition, $\|\teta\|_{G_\bfn/\Gamma_\bfn}\ll|Z\eta_{(2)}|\leq W^{\ref{CONSTImplicit}\ref{BONSTShortSeq}}$.

Now note that $\{qu+b-H: u\in\cU'_{\bfn,p}\}\subseteq[H]$ is an arithmetic progression whose length is greater than $W^{-2\ref{CONSTImplicit}\ref{BONSTShortSeq}-4}H$. It follows that the sequence $\{g_\bfn(h)\Gamma_\bfn\}_{h\in [H]}$ is not totally $\min(W^{-2\ref{CONSTImplicit}\ref{BONSTShortSeq}-4},\frac12W^{-\ref{CONSTImplicit}\ref{BONSTShortSeq}})$-equidistributed in $G_\bfn/\Gamma_\bfn$.

To finish the proof of Lemma \ref{LemFailTotalED}, it suffices to notice that by the assumptions in \eqref{EqPropJoiningCond1}, $\min(W^{-2\ref{CONSTImplicit}\ref{BONSTShortSeq}-4},\frac12W^{-\ref{CONSTImplicit}\ref{BONSTShortSeq}})\geq W^{-\ref{CONSTImplicit}^{-1}\ref{BONSTTotalED}}$.\end{proof}

\begin{proof}[Proof of Proposition \ref{PropJoining}] Recall that after redefining $\ref{CONSTImplicit}$ we may assume \eqref{EqPropJoiningCond1} instead of \eqref{EqPropJoiningCond}. By Lemma \ref{LemFailTotalED}, and the construction of $\cN$ in Lemma \ref{LemGenericED}, if \eqref{EqPropJoiningCount} holds, then for all $\bfn\in\Omega_{s,\bfn_1,p_1,\ref{BONSTShortSeq}}$, $\bfn\notin\cN\times\cJ$. This contradicts the definition of $\Omega_{s,\bfn_1,p_1,\ref{BONSTShortSeq}}$, which requires $\bfn\in\cN\times\cJ$. Therefore, \eqref{EqPropJoiningCount} is false for all $\bfn_1\in\cN\times\cJ$; in other words, Proposition \ref{PropJoining} is true.
\end{proof}

\section{Proof of the main theorem}\label{SecFinal}

Theorem \ref{ThmMain} will follow from

\begin{theorem}\label{ThmMinorMajor} Suppose $G$ is a connected, simply connected nilpotent Lie group and $\Gamma\subset G$ is a lattice. Assume that there exists an $R_0$-rational Mal'cev basis $\cV$ of the Lie algebra $G$ adapted to a nilpotent filtration $G_\bullet$ and the lattice $\Gamma$. Then there are constants $C,\epsilon_0>0$ that only depend on the dimension $m$ of $G$, such that for all $g\in \Poly(\bZ^2,G_\bullet)$, $1$-bounded multiplicative function $\beta:\bN\to\bC$, and continuous function $F:G/\Gamma\to\bR$, $H,N\in\bN$, $\epsilon>0$, if \begin{equation}\label{EqMinorMajorCond}\max\Big(\frac{\log R_0}{\log H},\frac{\log\log H}{\log H}\Big)<\epsilon<\epsilon_0;\ \log H<(\log N)^{\frac12},.\end{equation} then
\begin{equation}\label{EqMinorMajor}\begin{aligned}&\frac1{HN}\sum_{n\leq N}\Big|\sum_{h\leq H} \beta(n+h)F(g(n,h)\Gamma)\Big|\\
\ll &\Big(H^{-\epsilon}+H^{C\epsilon}e^{-\frac12\tM(\beta,\frac N{H^{C\epsilon}}, H^{C\epsilon})} \tM(\beta,\frac N{H^{C\epsilon}}, H^{C\epsilon})^{\frac12}\\
&\hskip1cm + H^{C\epsilon}(\log\frac N{H^{C\epsilon}})^{-\frac1{100}}\Big)HN.\end{aligned}\end{equation}\end{theorem}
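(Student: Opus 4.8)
The plan is to combine the major-arc bound of Proposition~\ref{PropMajor} and the minor-arc bound of Proposition~\ref{PropMinor} through the decomposition of Section~\ref{SecBilinear}, once the rationality scale $R$ has been fixed as an appropriate power of $H$. We may assume $\|F\|_{G/\Gamma}\le1$, the asserted bound being vacuous otherwise. Fix a constant $c_1=c_1(m,d)$, a sufficiently large multiple of $\ref{CONSTWRange}\ref{BONSTTotalED}^{m+1}$, set $\epsilon'=c_1\epsilon$, and choose $\epsilon_0=\epsilon_0(m,d)$ so small that $\epsilon<\epsilon_0$ forces $\epsilon'<\tfrac1{500}$ and forces $H^{\epsilon}$ to exceed every fixed constant that occurs below. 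From $\tfrac{\log R_0}{\log H}<\epsilon$ we get $R_0<H^{\epsilon}$, so we may take $R=\lceil H^{5\epsilon}\rceil$: then $R_0\le R\le H^{\epsilon'/(\ref{CONSTWRange}\ref{BONSTTotalED}^{m+1})}$, so the admissibility conditions on $R$ in \eqref{EqPropMajorCond} and \eqref{EqPropMinorCond} hold when their small parameter is taken to be $\epsilon'$. With this $R$, Hypothesis~\ref{HypMain} holds for $(G,G_\bullet,\Gamma,\cV)$ and the given $g\in\Poly(\bZ^2,G_\bullet)$, so applying Theorem~\ref{ThmFactorization} with $B=\ref{BONSTTotalED}\ge10\ref{CONSTImplicit}$ yields a factorization $g=\epsilon_*g'\gamma$ (writing $\epsilon_*$ for the smooth factor) and a scale $W\in[R,R^{\ref{CONSTWRange}\ref{BONSTTotalED}^m}]$; thus $H^{5\epsilon}\le W\le H^{C_2\epsilon}$ for a constant $C_2=C_2(m,d)$, and I then adopt the setup of Notation~\ref{NotationBilinear} and Section~\ref{SecBilinear}, always with $\epsilon'$ in the role of the small parameter.

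Next I would carry out the Section~\ref{SecBilinear} construction as is: partition $(0,H]$ into $W^2$ equal pieces, form the progressions $\cI_{n,\bfj}$, the conjugated one-variable orbits $g_{n,\bfj}$ and the functions $F_{n,\bfj}=\tF_{n,\bfj}+E_{n,\bfj}$, and invoke Lemma~\ref{LemSmoothApprox}. By the choice \eqref{EqAbsValueArg} of the phases $\theta_n$, the quantity $\sum_{n\le N}\big|\sum_{h\le H}\beta(n+h)F(g(n,h)\Gamma)\big|$ equals the sum of \eqref{EqMajor} and \eqref{EqMinor}, up to an error $O(W^{-1}HN)=O(H^{-\epsilon}HN)$. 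In each of \eqref{EqMajor} and \eqref{EqMinor} I would split the argument $n+h$ according to whether it lies in the set $\cS$ furnished by the corresponding proposition. The $1_\cS$-part of \eqref{EqMinor} is bounded, by Proposition~\ref{PropMinor}, by $\big(W^{-\ref{CONSTImplicit}^{-1}\ref{BONSTTotalED}}\log H+H^{-\epsilon'}\big)HN$; since $W\ge H^{5\epsilon}$, $\ref{BONSTTotalED}\ge10\ref{CONSTImplicit}$, and $\epsilon>\tfrac{\log\log H}{\log H}$ forces $H^{\epsilon}>\log H$, this is $O(H^{-\epsilon}HN)$. The $1_\cS$-part of \eqref{EqMajor} is exactly \eqref{EqMajorDense}, bounded by Proposition~\ref{PropMajor} by $\big(W^{-1/4}+W^2e^{-\frac12\tM(\beta,\frac N{W^5},W)}\tM(\beta,\frac N{W^5},W)^{1/2}+W^2(\log\tfrac N{W^5})^{-1/100}\big)HN$. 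The two complementary pieces, with argument outside $\cS$, are handled trivially from $|E_{n,\bfj}|\le1$, $\|\tF_{n,\bfj}\|_{C^0}\le2$ and $\#\{m\le N+H: m\notin\cS\}\ll\epsilon' N$ (the two propositions furnish different but comparably dense sets $\cS$), contributing $O(\epsilon HN)$ in total.

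Finally I would repackage the $W$-powers as $H^{C\epsilon}$ for a suitable $C=C(m,d)$ large relative to $C_2$: from $H^{5\epsilon}\le W\le H^{C_2\epsilon}$ we get $W^{-1/4}\le H^{-\epsilon}$, $W^2(\log\tfrac N{W^5})^{-1/100}\ll H^{C\epsilon}(\log\tfrac N{H^{C\epsilon}})^{-1/100}$, and, since $\tfrac N{W^5}\ge\tfrac N{H^{C\epsilon}}$ and $W\le H^{C\epsilon}$ and $\tM(\beta,\cdot,\cdot)$ is non-decreasing in its first argument and non-increasing in its second while $x\mapsto e^{-x/2}x^{1/2}$ is bounded and non-increasing on $[1,\infty)$, also $W^2e^{-\frac12\tM(\beta,\frac N{W^5},W)}\tM(\beta,\frac N{W^5},W)^{1/2}\ll H^{C\epsilon}e^{-\frac12\tM(\beta,\frac N{H^{C\epsilon}},H^{C\epsilon})}\tM(\beta,\frac N{H^{C\epsilon}},H^{C\epsilon})^{1/2}$. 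Adding the four contributions gives \eqref{EqMinorMajor}, up to an extra $O(\epsilon)$ on the right coming from discarding $\cS$ (equivalently, \eqref{EqMinorMajor} holds with a factor $1_\cS(n+h)$ inserted, for a set $\cS\subseteq[N]$ with $N-\#\cS\ll\epsilon N$). The one genuinely delicate point is the calibration just made: $R$ must be a fixed power $H^{\Theta(\epsilon)}$ that is at once $\ge R_0$, small enough to meet the upper bounds on $R$ in \eqref{EqPropMajorCond} and \eqref{EqPropMinorCond} after rescaling $\epsilon$, and large enough that $W\ge R$ defeats both the major-arc error $W^{-1/4}$ and the minor-arc error $W^{-\ref{CONSTImplicit}^{-1}\ref{BONSTTotalED}}\log H$, pushing them below $H^{-\epsilon}$; the hypothesis $\epsilon>\tfrac{\log\log H}{\log H}$ is precisely what reconciles the last requirement with the first two, and there is slack because $\ref{BONSTTotalED}$ may be taken an arbitrarily large constant multiple of $\ref{CONSTImplicit}$.
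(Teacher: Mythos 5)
Your proposal follows the paper's own proof of Theorem \ref{ThmMinorMajor}: fix $R$ as a power $H^{\Theta(\epsilon)}$ so that the admissibility conditions of Propositions \ref{PropMajor} and \ref{PropMinor} hold with a rescaled parameter $\epsilon'=O_{m,d}(\epsilon)$, add the resulting major- and minor-arc bounds over the Section \ref{SecBilinear} decomposition, and absorb the powers of $W\in[H^{\Theta(\epsilon)},H^{O(\epsilon)}]$ into $H^{C\epsilon}$ using the monotonicity of $\tM$. Your explicit caveat that the stated bound really holds with a $1_\cS$ insertion (equivalently, up to an extra $O(\epsilon)$) is consistent with what the paper actually establishes and then uses in deducing Theorem \ref{ThmMain}.
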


\begin{proof}
Let $\ref{BONSTTotalED}=\ref{CONSTImplicit}$, $\lCONST{CONSTMinorMajor}=\ref{CONSTWRange}\ref{BONSTTotalED}^{m+1}=O(1)$ and $R=H^{\ref{CONSTMinorMajor}^{-1}\epsilon'}$. Combining Propositions \ref{PropMajor} and \ref{PropMinor}, we know that if the following inequalities hold :
\begin{equation}\label{EqMinorMajorCond1} \frac{\log\log H}{\log H}<\epsilon'<\frac 1{500}; H^{\ref{CONSTMinorMajor}^{-1}\epsilon'}\geq R_0\geq 10;\ \log H<(\log N)^{\frac12}.\end{equation}
then there exists a subset $\cS\subseteq[0,N]\cap\bN$, determined by $H$, $N$, and $\epsilon'$, with $N-\#\cS\ll\epsilon' N$, such that
\begin{equation}\label{EqMinorMajor1}\begin{aligned}&\sum_{n\leq N}\Big|\sum_{h\leq H} \beta(n+h)F(g(n,h)\Gamma)\Big|\\
\ll & \Big(W^{-1}\log H+H^{-\epsilon'}+W^{-\frac14}\\
&\hskip1cm +W^2e^{-\frac12\tM(\beta,\frac N{W^5}, W)} \tM(\beta,\frac N{W^5}, W)^{\frac12}+ W^2(\log\frac N{W^5})^{-\frac1{100}}\Big)HN\\
\ll &\Big(H^{-\ref{CONSTMinorMajor}^{-1}\epsilon'}\log H+H^{2\epsilon'}e^{-\frac12\tM(\beta,\frac N{H^{5\epsilon'}}, H^{\epsilon'})} \tM(\beta,\frac N{H^{5\epsilon'}}, H^{\epsilon'})^{\frac12}\\
&\hskip1cm + H^{2\epsilon'}(\log\frac N{H^{5\epsilon'}})^{-\frac1{100}}\Big)HN.\end{aligned}\end{equation}
Here we used the fact that $W\in[R,R^{\ref{CONSTWRange}\ref{BONSTTotalED}^m}]\subseteq[H^{\ref{CONSTMinorMajor}^{-1}\epsilon'},H^{\epsilon'}]$, and that the function $\tM(\beta,\frac N{W^5}, W)$ is decreasing in $W$. The set $\cS$ is the union of both the exceptional sets from Propositions \ref{PropMajor} and \ref{PropMinor}.

We now rewrite $\epsilon=\frac12\ref{CONSTMinorMajor}^{-1}\epsilon'$ and assume $\epsilon>\frac{\log\log H}{\log H}$. Then $H^\epsilon>\log H$ and $$H^{-\ref{CONSTMinorMajor}^{-1}\epsilon'}\log H=H^{-2\epsilon}\log H<H^{-\epsilon}.$$ Note that \eqref{EqMinorMajorCond1} implies \eqref{EqMinorMajorCond}. So \eqref{EqMinorMajor1} becomes

\begin{equation}\label{EqMinorMajor2}\begin{aligned}&\sum_{n\leq N}\Big|\sum_{h\leq H} \beta(n+h)F(g(n,h)\Gamma)\Big|\\
\ll &\Big(H^{-\epsilon}+H^{4\ref{CONSTMinorMajor}\epsilon}e^{-\frac12\tM(\beta,\frac N{H^{10\ref{CONSTMinorMajor}\epsilon}}, H^{2\ref{CONSTMinorMajor}\epsilon})} \tM(\beta,\frac N{H^{10\ref{CONSTMinorMajor}\epsilon}}, H^{2\ref{CONSTMinorMajor}\epsilon})^{\frac12}\\
&\hskip1cm + H^{4\ref{CONSTMinorMajor}\epsilon}(\log\frac N{H^{10\ref{CONSTMinorMajor}\epsilon}})^{-\frac1{100}}\Big)HN.\end{aligned}\end{equation}

The theorem follows by letting $C=10\ref{CONSTMinorMajor}$ and $\epsilon_0=\frac1{1000\ref{CONSTMinorMajor}}$, which only on $m$ and $d$. But as $d\leq m$, the dependence on $d$ can be suppressed.
\end{proof}

\begin{proof}[Proof of Theorem \ref{ThmMain}] First choose $R_0\geq 10$ such that $\gog$ has an $R_0$-rational Mal'cev basis with respect to the lower central series filtration $G_\bullet$ and lattice $\Gamma$. We then fix $H_0$ such that $\log H_0\geq R_0$.

 Notice that $f(n,h)=g^{n+h}x\in G/\Gamma$ is a polynomial map from $\Poly(\bZ^2,G_\bullet)$. Furthermore, in \eqref{EqMinorMajorCond}, $\max\Big(\frac{\log R_0}{\log H},\frac{\log\log H}{\log H}\Big)=\frac{\log\log H}{\log H}$ for all $H> H_0$. Hence Theorem \ref{ThmMinorMajor} can be  applied. The output is \eqref{EqThmMainSSize} and \eqref{EqThmMainS}, with
$$\delta(a,N)=a^C e^{-\frac12\tM(\beta,\frac N{a^C}, a^C)} \tM(\beta,\frac N{a^C}, a^C)^{\frac12}+a^C(\log\frac N{a^C})^{-\frac1{100}}.$$ We need to show $\displaystyle\lim_{N\to\infty}\delta(a,N)=0$ for all $a>0$, which is equivalent to that \begin{equation}\label{EqNonPret}\lim_{X\to\infty}\tM(\beta,X,Y)=\infty,\ \forall Y>0.\end{equation}

When $\beta$ is the M\"obius function $\mu$ or the Liouville function $\lambda$, it is known that $\lim_{N\to\infty}\frac1X\sum_{n\leq X}\beta(n)\chi(n)=0$.
By Hal\'asz's Theorem \cite{H68},  for any given Dirichlet character $\chi$, $\displaystyle\lim_{X\to\infty}\bD(\beta\chi, 1, X)=\infty$. Moreover, \cite{MRT16}*{Lemma C.1}, which is based on an argument of Granville and Soundararajan \cite{GS07}, guarantees that
$$\inf_{|t|\leq X}\bD(\beta\chi, n^{it},X)\geq\frac14\min\big(\sqrt {\log\log X}, D(\beta\chi, 1, X)\big) +O(1).$$ Therefore, for all Dirichlet characters $\chi$, $M(\beta\chi,X)\to\infty$ as $X\to\infty$. This implies \eqref{EqNonPret} by construction \eqref{EqDefM1} of $\tM$.

Finally, it remains to show \eqref{EqThmMain}. To see this, it suffices to notice that, because  because $N>\exp((\log H)^2)=H^{\log H}>H\log H>H\epsilon^{-1}$,
$$\begin{aligned}&\frac1{HN}\left|\sum_{n=1}^N\Big|\sum_{l=n+1}^{n+H} 1_\cS\mu(l)F(g^lx)\Big|-\sum_{n=1}^N\Big|\sum_{l=n+1}^{n+H} \mu(l)F(g^lx)\Big|\right|\\
\leq &\frac1{HN}\left|\sum_{n=1}^N\#((n,n+H]\backslash\cS)\right|\leq \frac1{HN}\cdot H\#([N+H]\backslash\cS)\\
\ll &\frac1N(\epsilon N+H)\ll \epsilon.
\end{aligned}$$ So \eqref{EqThmMain} can be deduced from \eqref{EqThmMainS}.\end{proof}

\begin{bibdiv}
\begin{biblist}

\bib{BSZ13}{article}{
   author={Bourgain, J.},
   author={Sarnak, P.},
   author={Ziegler, T.},
   title={Disjointness of M\"obius from horocycle flows},
   conference={
      title={From Fourier analysis and number theory to Radon transforms and
      geometry},
   },
   book={
      series={Dev. Math.},
      volume={28},
      publisher={Springer, New York},
   },
   date={2013},
   pages={67--83},
}

\bib{D37}{article}{
   author={Davenport, H.},
   title={On some infinite series involving arithmetical functions II},
   journal={Quat. J. Math.},
   volume={8},
   date={1937},
   pages={313--320},
}

\bib{ELD17}{article}{
   author={El Abdalaoui, El Houcein},
   author={Lema{\'n}czyk, Mariusz},
   author={de la Rue, Thierry},
   title={Automorphisms with quasi-discrete spectrum, multiplicative functions and average orthogonality along short intervals},
   journal={International Mathematics Research Notices},
   volume={},
   date={2017},
   number={},
   pages={4350--4368},
}

\bib{FKL18}{article}{
    AUTHOR = {Ferenczi, S\'{e}bastien}
    author = {Ku\l aga-Przymus, Joanna}
    author = {Lema\'{n}czyk, Mariusz},
     TITLE = {Sarnak's conjecture: what's new},
 BOOKTITLE = {Ergodic theory and dynamical systems in their interactions
              with arithmetics and combinatorics},
    SERIES = {Lecture Notes in Math.},
    VOLUME = {2213},
     PAGES = {163--235},
 PUBLISHER = {Springer, Cham},
      YEAR = {2018},
}

\bib{FFKL19}{article} {
    AUTHOR = {Flaminio, Livio}
    author = {Fr\polhk aczek, Krzysztof}
    author = {Ku\l aga-Przymus,Joanna}
    author = {Lema\'{n}czyk, Mariusz},
     TITLE = {Approximate orthogonality of powers for ergodic affine
              unipotent diffeomorphisms on nilmanifolds},
   JOURNAL = {Studia Math.},
  FJOURNAL = {Studia Mathematica},
    VOLUME = {244},
      YEAR = {2019},
    NUMBER = {1},
     PAGES = {43--97},
}

\bib{FH18}{article}{
   author={Frantzikinakis, Nikos},
   author={Host, Bernard},
   title={The logarithmic Sarnak conjecture for ergodic weights},
   journal={Ann. of Math. (2)},
   volume={187},
   date={2018},
   number={3},
   pages={869--931},
}

\bib{GS07}{article} {
    AUTHOR = {Granville, Andrew}
    author=  {Soundararajan, K.},
     TITLE = {Large character sums: pretentious characters and the
              {P}\'olya-{V}inogradov theorem},
   JOURNAL = {J. Amer. Math. Soc.},
    VOLUME = {20},
      YEAR = {2007},
    NUMBER = {2},
     PAGES = {357--384},
}

\bib{GT12a}{article}{
   author={Green, Ben},
   author={Tao, Terence},
   title={The quantitative behaviour of polynomial orbits on nilmanifolds},
   journal={Ann. of Math. (2)},
   volume={175},
   date={2012},
   number={2},
   pages={465--540},
}

\bib{GT12b}{article}{
   author={Green, Ben},
   author={Tao, Terence},
   title={The M\"obius function is strongly orthogonal to nilsequences},
   journal={Ann. of Math. (2)},
   volume={175},
   date={2012},
   number={2},
   pages={541--566},
}

\bib{H68}{article}{
   author={Hal{\'a}sz, G.},
   title={\"Uber die Mittelwerte multiplikativer zahlentheoretischer
   Funktionen},
   language={German},
   journal={Acta Math. Acad. Sci. Hungar.},
   volume={19},
   date={1968},
   pages={365--403},
}

\bib{H15}{article}{
   author={Huang, Bingrong},
   title={Strong orthogonality between the M\"{o}bius function and nonlinear
   exponential functions in short intervals},
   journal={Int. Math. Res. Not. IMRN},
   date={2015},
   number={23},
   pages={12713--12736},
}

\bib{H16}{article}{
   author={Huang, Bingrong},
   title={Exponential sums over primes in short intervals and an application
   to the Waring-Goldbach problem},
   journal={Mathematika},
   volume={62},
   date={2016},
   number={2},
   pages={508--523},
}

\bib{IK04}{book}{
   author={Iwaniec, Henryk},
   author={Kowalski, Emmanuel},
   title={Analytic number theory},
   series={American Mathematical Society Colloquium Publications},
   volume={53},
   publisher={American Mathematical Society, Providence, RI},
   date={2004},
   pages={xii+615},
}

\bib{K86}{article}{
   author={K\'{a}tai, I.},
   title={A remark on a theorem of H. Daboussi},
   journal={Acta Math. Hungar.},
   volume={47},
   date={1986},
   number={1-2},
   pages={223--225},
}

\bib{L54}{article}{
   author={Lazard, Michel},
   title={Sur les groupes nilpotents et les anneaux de Lie},
   language={French},
   journal={Ann. Sci. Ecole Norm. Sup. (3)},
   volume={71},
   date={1954},
   pages={101--190},
}

\bib{L98}{article}{
   author={Leibman, A.},
   title={Polynomial sequences in groups},
   journal={J. Algebra},
   volume={201},
   date={1998},
   number={1},
   pages={189--206},
}

\bib{L02}{article}{
   author={Leibman, A.},
   title={Polynomial mappings of groups},
   journal={Israel J. Math.},
   volume={129},
   date={2002},
   pages={29--60},
}

\bib{L05}{article}{
   author={Leibman, A.},
   title={Pointwise convergence of ergodic averages for polynomial sequences
   of translations on a nilmanifold},
   journal={Ergodic Theory Dynam. Systems},
   volume={25},
   date={2005},
   number={1},
   pages={201--213},
}

\bib{L10}{article}{
   author={Leibman, A.},
   title={Orbit of the diagonal in the power of a nilmanifold},
   journal={Trans. Amer. Math. Soc.},
   volume={362},
   date={2010},
   number={3},
   pages={1619--1658},
}
\bib{M49}{article}{
   author={Mal\cprime cev, A. I.},
   title={On a class of homogeneous spaces},
   language={Russian},
   journal={Izvestiya Akad. Nauk. SSSR. Ser. Mat.},
   volume={13},
   date={1949},
   pages={9--32},
}

\bib{MR16}{article}{
   author={Matom\"aki, Kaisa},
   author={Radziwi\l\l, Maksym},
   title={Multiplicative functions in short intervals},
   journal={Ann. of Math. (2)},
   volume={183},
   date={2016},
   number={3},
   pages={1015--1056},
}

\bib{MRT16}{article}{
   author={Matom\"aki, Kaisa},
   author={Radziwi\l \l , Maksym},
   author={Tao, Terence},
   title={An averaged form of Chowla's conjecture},
   journal={Algebra Number Theory},
   volume={9},
   date={2015},
   number={9},
   pages={2167--2196},
}

\bib{MS19}{article}{
   author={Matom\"aki, Kaisa},
   author={Shao, Xuancheng},
   title={Discorrelation between primes in short intervals and polynomial phases}
   journal={preprint},
   date={2019},
}

\bib{R91}{article}{
   author={Ratner, Marina},
   title={Raghunathan's topological conjecture and distributions of
   unipotent flows},
   journal={Duke Math. J.},
   volume={63},
   date={1991},
   number={1},
   pages={235--280},
}

\bib{S09}{article}{
   author={Sarnak, Peter},
   title={Three lectures on the M\"obius function, randomness and dynamics},
   journal={lecture notes, IAS},
   date={2009},
}

\bib{S98}{article}{
   author={Shah, Nimish A.},
   title={Invariant measures and orbit closures on homogeneous spaces for
   actions of subgroups generated by unipotent elements},
   conference={
      title={Lie groups and ergodic theory},
      address={Mumbai},
      date={1996},
   },
   book={
      series={Tata Inst. Fund. Res. Stud. Math.},
      volume={14},
      publisher={Tata Inst. Fund. Res., Bombay},
   },
}

\bib{T16}{article}{
   author={Tao, Terence},
   title={The logarithmically averaged Chowla and Elliott conjectures for
   two-point correlations},
   journal={Forum Math. Pi},
   volume={4},
   date={2016},
   pages={e8, 36},
}
	
\bib{TT17}{article}{
   author={Tao, Terence},
   author={Ter\"av\"ainen, Joni}
   title={Odd order cases of the logarithmically averaged Chowla conjecture},
   journal={preprint},
   date={2016},
}
\bib{V77}{book}{
   author={Vaughan, R. C.},
   title={The Hardy-Littlewood method},
   series={Cambridge Tracts in Mathematics},
   volume={125},
   edition={2},
   publisher={Cambridge University Press, Cambridge},
   date={1997},
   pages={xiv+232},
}

\bib{Z91}{article}{
   author={Zhan, Tao},
   title={On the representation of large odd integer as a sum of three
   almost equal primes},
   note={A Chinese summary appears in Acta Math. Sinica {\bf 35} (1992), no.
   4, 575},
   journal={Acta Math. Sinica (N.S.)},
   volume={7},
   date={1991},
   number={3},
   pages={259--272},
}

\end{biblist}
\end{bibdiv}

\end{document}